\newtheorem{theorem}{Theorem}[section]
\newtheorem{lemma}[theorem]{Lemma}
\newtheorem{proposition}[theorem]{Proposition}
\newtheorem{corollary}[theorem]{Corollary}
\newtheorem{remark}[theorem]{Remark}
\newtheorem{definition}[theorem]{Definition}
\def\R{{\mathbb R}}
\def\cE{{\mathcal F}}
\def\cH{{\mathcal H}}
\def\cK{{\mathcal K}}
\def\cQ{{\mathcal Q}}
\def\cU{{\mathcal U}}
\def\a{\alpha}
\def\b{\beta}
\def\e{\varepsilon}
\def\d{\delta}
\def\k{\kappa}
\def\l{\lambda}
\def\L{\Lambda}
\def\m{\mu}
\def\n{\nabla}
\def\p{\partial}
\def\r{\rho}
\def\s{\sigma}
\def\t{\tau}
\def\w{\omega}
\def\W{\Omega}
\def\g{\gamma}
\def\z{\zeta}
\def\1{\left(}
\def\2{\right)}
\def\3{\left\{}
\def\4{\right\}}
\def\8{\infty}
\def\sm{\setminus}
\def\ss{\subseteq}
\def\cc{\subset\subset}
\DeclareMathOperator*{\diam}{diam}
\DeclareMathOperator*{\supp}{supp}
\def\Wb{\Omega_*}
\def\Wc{\bar{\Omega}_*}
\begin{document}

\title{Regularity for Shape Optimizers: The Degenerate Case}

\author{Dennis Kriventsov}
\address[Dennis Kriventsov]{Courant Institute of Mathematical Sciences, New York University, New York}
\email{dennisk@cims.nyu.edu}  

\author{Fanghua Lin}
\address[Fanghua Lin]{Courant Institute of Mathematical Sciences, New York University, New York}
\email{linf@cims.nyu.edu}

\date{October 27, 2017}

\begin{abstract}
	We consider minimizers of
	\[
	F(\l_1(\W),\ldots,\l_N(\W)) + |\W|,
	\]
	where $F$ is a function nondecreasing in each parameter, and $\l_k(\W)$ is the $k$-th Dirichlet eigenvalue of $\W$. This includes, in particular, functions
 $F$ which depend on just some of the first $N$ eigenvalues, such as the often studied $F=\l_N$. The existence of a minimizer, which is also a bounded set of 
finite perimeter, was shown recently. Here we show that the reduced boundary of the minimizers $\W$ is made 
up of smooth graphs, and examine the difficulties in classifying the singular points. Our approach is based on an approximation ("vanishing viscosity") argument, which--counterintuitively--allows us to recover an Euler-Lagrange equation for the minimizers which is not otherwise available.
\end{abstract}

\maketitle

\section{Introduction}\label{sec:intro}

Let $\l_k(\W)$ be the $k$-th eigenvalue of the Laplacian, with homogeneous Dirichlet boundary conditions, of a set $\W$. A commonly studied optimization problem involves searching for sets $\Wb$ of volume $1$ which have
\begin{equation}\label{eq:mainprob}
\l_k(\Wb) = m_k:= \inf \{\l_k(\W) : |\W|=1 \}.
\end{equation}
Understanding the $\Wb$ satisfying \eqref{eq:mainprob} for each $k$ is of great interest from the perspective of spectral theory of domains. It is relevant to Polya's conjecture that
\[
m_k \geq c(n) k^{\frac{2}{n}}
\]
(here $c(n)$ is a specific dimensional constant), to various generalizations of it, and generally to understanding quantitative forms of Weyl's asymptotic law and the constraints placed on a domain's shape by its spectrum. See \cite{lin} and references therein for a discussion of these topics. Optimization problems of this type, and various generalizations modeled on them, are also of importance in physics and in various applied contexts.

Despite a century of effort, sets $\Wb$ satisfying \eqref{eq:mainprob} are surprisingly poorly understood. The only cases resolved in a truly satisfactory manner are when $k=1$ (this is the well-known Faber-Krahn inequality, which asserts that $\Wb$ must be a ball) and when $k=2$ (this is a result attributed to Krahn and Szeg\H{o}, which guarantees that $\Wb$ is the union of two disjoint balls of volume $\frac{1}{2}$). When $k=3$, it was established by Wolf and Keller that $\Wb$ is connected, at least for $n=2$ and $3$ \cite{WK}. Beyond that, explicit results are limited, although there have been several interesting numerical studies recently \cite{O,AP,AO}. We refer to the book \cite{H} for useful discussion, many interesting examples and open problems, and bibliographic references. 

There are several other, more abstract, results pertaining to the existence of $\Wb$. The first is in \cite{DB}, which shows that there is an $\Wb$ attaining the infimum
\[
\l_k(\Wb) = m_k:= \inf \{\l_k(\W) : |\W|=1, \W\ss B_R \}.
\]
for every $R$ large enough (so that $|B_R|>1$); the resulting $\Wb$ is \emph{quasiopen}, which is a suitable relaxation of open sets for which $\l_k(\Wb)$ may still be defined, see Section \ref{sec:not}. This extra boundedness constraint was removed only recently, simultaneously in \cite{Bucur} and \cite{MP}. These works show that there is a bounded quasiopen set $\Wb$ satisfying \eqref{eq:mainprob}, and the first also shows that $\Wb$ has finite perimeter. It is unknown whether $\Wb$ admits an equivalent open representative, and indeed the only regularity result pertaining to this problem is \cite{BMPV}, which shows that there is an eigenfunction $u_k$ for $\Wb$ which is Lipschitz continuous.

Our goal in this paper is to provide a more detailed characterization of the boundary of $\Wb$. Here is our main theorem, written specifically for $\Wb$ satisfying \eqref{eq:mainprob}:

\begin{theorem} Let $\Wb$ be a quasiopen set satisfying \eqref{eq:mainprob}. Then there is a representative for $\Wb$ whose boundary admits the decomposition
	\[
	\p \Wb = \p^* \Wb \cup Z_{AC} \cup Z_{C},
	\]
	and has the following properties:
	\begin{enumerate}
		\item The set $\p^* \Wb$ is the reduced boundary; it is relatively open in $\p \Wb$, and it locally coincides with the graph of an analytic function.
		\item The set $Z_{AC}$ is a relatively open subset $\p \Wb \sm \p^* \Wb$ containing all of the points $x$ where
		\[
		\liminf_{r\searrow 0} \frac{|B_r(x) \cap \Wb| }{|B_r|} < 1.
		\]
		This set has a Hausdorff dimension of at most $n-3$.
		\item The set $Z_C$ consists of points $x$ in $\p \Wb$ with
		\[
		\lim_{r\searrow 0} \frac{|B_r(x) \cap \Wb| }{|B_r|} = 1.
		\]
		\item There is an orthonormal set of eigenfunctions $\{u_j\}$ for an open subset of $\Wb$, each with eigenvalue $\l_k(\Wb)$, which are all Lipschitz continuous. In addition, there are numbers $\xi_j$, with
		\[
		\sum_j \xi_j = 1,
		\]
		so that at any point $x\in \p^* \Wb$ we have
		\begin{equation}\label{eq:el}
		\sum_j \xi_j (u_j)_\nu^2 (x) = 1.
		\end{equation}
		Here $(u_j)_\nu$ represents the partial derivative of $u_j$ in the direction normal to $\Wb$ at $x$.
	\end{enumerate}
\end{theorem}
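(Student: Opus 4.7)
The plan is to follow the ``vanishing viscosity'' route advertised in the abstract. First, I would regularize $F$ to make it strictly increasing in every argument: for $\e>0$ set
\[
F_\e(\l_1,\ldots,\l_N) := F(\l_1,\ldots,\l_N) + \e(\l_1+\cdots+\l_N),
\]
and take $\W_\e$ to be a bounded quasiopen minimizer of $F_\e(\l_1(\W),\ldots,\l_N(\W))+|\W|$; its existence, boundedness, and finite perimeter should follow from the same arguments that produce $\Wb$. The point of the perturbation is that every eigenvalue $\l_1,\ldots,\l_N$ now enters every first-variation formula with a strictly positive weight, which is what cures the degeneracy caused by multiplicities of $\l_k(\Wb)$ and, in the pure $F=\l_k$ case, by the fact that only one eigenvalue sees the domain perturbation to first order.

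Next I would write down the Euler--Lagrange equation for $\W_\e$. Combining Hadamard's shape derivative $\p_t|_{t=0}\l_j((I+tV)(\W_\e)) = -\int_{\p\W_\e}(u_j^\e)_\nu^2\,V\cdot\nu$ (read as the spectrum of a quadratic form on the eigenspace of $\l_j$ when $\l_j$ is not simple) with $\p_t|_{t=0}|(I+tV)(\W_\e)| = \int_{\p\W_\e}V\cdot\nu$, the stationarity of $F_\e + |\cdot|$ yields
\[
\sum_{j=1}^N c_j^\e\,(u_j^\e)_\nu^2 = 1 \quad\text{on}\ \p^*\W_\e, \qquad c_j^\e = \p_j F_\e > 0,
\]
for an appropriately chosen orthonormal basis $\{u_j^\e\}$ of the first $N$ eigenspaces. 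Together with the Lipschitz bound of \cite{BMPV}, this identifies $\W_\e$ as a vectorial Alt--Caffarelli-type free-boundary minimizer with weights uniformly bounded away from zero. Standard free-boundary theory---Weiss-type monotonicity, classification of homogeneous blow-ups, $\e$-regularity, and Federer dimension reduction---then shows that $\p^*\W_\e$ is locally an analytic graph and that its singular set has Hausdorff dimension at most $n-3$, the vectorial structure excluding the lowest-codimension singular cones.

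The last step is the vanishing-viscosity limit $\e\searrow 0$. Via $\g$-compactness of quasiopen sets one gets $\W_\e \to \Wb$ with uniform density and perimeter bounds, strong $H^1_0$ convergence of eigenfunctions, and subsequential limits $c_j^\e \to \xi_j \ge 0$. A normalization using the $|\cdot|$ term in the Euler--Lagrange relation produces $\sum_j\xi_j = 1$, and the pointwise identity $\sum_j\xi_j(u_j)_\nu^2 = 1$ is inherited on $\p^*\Wb$. The analytic-graph and dimension statements transfer to points of $\p\Wb$ where the lower density is strictly less than one---namely $\p^*\Wb \cup Z_{AC}$---because at such points the blow-ups are genuinely two-phase and the nondegeneracy survives in the limit; density-one points $Z_C$ are beyond the reach of this scheme and are collected as the residual set. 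The main obstacle I foresee is making the preceding uniform in $\e$: the Alt--Caffarelli constants for $\W_\e$ depend on $\min_j c_j^\e$, which can degenerate, so a robust argument has to identify the limiting $\xi_j$ before passing to the limit and bootstrap an $\e$-independent nondegeneracy of $\p\W_\e$ directly from the Euler--Lagrange identity rather than from the approximate functional.
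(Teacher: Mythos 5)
Your overall philosophy---regularize $F$, obtain an Euler--Lagrange identity for the regularized minimizers, and pass to the limit---is indeed the paper's strategy. But the specific perturbation $F_\e = F + \e(\l_1+\cdots+\l_N)$ misses the single structural property that makes the whole scheme work, and so the argument as written has a genuine gap.

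The issue is at the step where you invoke Hadamard's shape derivative to write $\sum_j c_j^\e\,(u_j^\e)_\nu^2 = 1$ on $\p^*\W_\e$. When $\l_j(\W_\e)$ is a multiple eigenvalue, the shape derivative of $\l_j$ is not a linear functional of the normal velocity $V\cdot\nu$; it is the smallest eigenvalue of a quadratic form on the $\l_j$-eigenspace, hence only one-sided differentiable. Your parenthetical ``(read as the spectrum of a quadratic form on the eigenspace)'' acknowledges this, but it is precisely why plain domain variations fail for $F=\l_N$---the paper cites \cite{H} on exactly this point. Adding $\e\sum_k\l_k$ makes $F_\e$ strictly increasing in each argument, but it does nothing to force the eigenvalues $\l_1(\W_\e),\ldots,\l_{N+1}(\W_\e)$ to be \emph{simple}. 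A minimizer of $F_\e$ can, and in the interesting cases will, still have $\l_k(\W_\e)=\l_{k+1}(\W_\e)$, and then the Euler--Lagrange equation you write down is unavailable. The paper's construction $F_p(\k)=G_p(\t_{1,p},\ldots,\t_{N,p})+\tfrac1p\sum_k(N+1-k)\k_k$ with $\t_{k,p}=(\sum_{l\le k}\k_l^p)^{1/p}$ is engineered specifically so that $\p_{\k_k}F_p>\p_{\k_{k+1}}F_p$ whenever $\k_k=\k_{k+1}$; this makes it strictly favorable to split a multiple eigenvalue, and hence forces every $p$-minimizer to have a simple spectrum $\l_1<\cdots<\l_{N+1}$ (Proposition \ref{prop:fp2}(3)). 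That is the structural input your regularization does not deliver, and it cannot be recovered downstream because without it there is no Euler--Lagrange identity to pass to the limit.

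Two further points. First, your scheme has no mechanism keeping $\W_\e$ near $\Wb$; the paper adds a penalty $E(\W)$ built from distance functions and a volume term precisely so that the approximating minimizers converge to the \emph{given} $\Wb$ rather than some other $F$-minimizer, and so that (B3) can be exploited to identify the limit. Second, your closing concern about uniformity is correct but understated: the uniform Alt--Caffarelli nondegeneracy (lower bound on $\sup_{B_r}|u|$) is simply not available here, and the paper does not prove it. Instead the Lipschitz bound is weighted by $\xi_{k,p}$ (Lemma \ref{lem:lip}), the density estimate (Corollary \ref{cor:ld}) is extracted from Weiss monotonicity rather than from nondegeneracy, and Bucur's torsion shape-subsolution framework (Section \ref{sec:db}) supplies the remaining geometric control. ``Bootstrapping an $\e$-independent nondegeneracy directly from the Euler--Lagrange identity,'' as you propose, is not something the paper achieves, and the residual set $Z_C$ of density-one points is the concrete manifestation of that failure. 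So the correct route is not to fix your uniformity worry but to reorganize the argument so that regularity is obtained \emph{without} a uniform lower bound, which is what Sections \ref{sec:lip}--\ref{sec:reg} do.
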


In other words, we show that $\p \Wb$ may be decomposed into three regions: the regular part of the boundary $\p^* \Wb$, which we show is smooth, a tiny set of singular points $Z_{AC}$, and some ``cusp'' points $Z_C$ at which the Lebesgue density of $\Wb$ is $1$. In addition, we obtain a kind of first variation condition on $\p^* \Wb$, giving a relationship between the normal derivatives of (possibly) several eigenfunctions of $\Wb$. While this may be interpreted as a first variation formula, it does not originate from performing shape variations directly on $\Wb$, and is not found in the literature. See \cite{H} for a discussion of why performing domain variations leads to underwhelming results for this particular problem. In fact, we would like to suggest that this formula \eqref{eq:el} is of at least equal interest to the accompanying regularity result, and feel that further understanding the constants $\xi_k$ would be of great interest.

The main weakness of our result, as far as the regularity aspect, is the lack of information about $Z_C$. We suspect that this set need not generally be empty, and discuss our expectations for the local behavior near such points in Section \ref{sec:reg}. This also seems consistent with the numerical studies; some relevant discussion can be found in \cite{AP}, for instance. However, our current methods are inadequate to provide further information here, and we leave this as another interesting open problem.

Before commenting on the proof, we note that we obtain a similar theorem for a much more general class of minimization problems. The precise assumptions are given in Section \ref{sec:not}, but they include minimizers, over all quasiopen sets $\W$, of
\[
F(\l_1(\W),\ldots,\l_N(\W)) + |\W|.
\]
Here $F$ should be $C^1$and nondecreasing in its parameters. We can also treat minimizers of
\[
F(\l_{k_1}(\W),\ldots,\l_{k_N}(\W)) + |\W|,
\]
where $F$ is Lipschitz continuous and strictly increasing in every parameter. This accounts for many natural examples. In particular, convex combinations of several sequential eigenvalues, like
\[
F = \a \l_k(\Wb) + \b \l_{k+1}(\Wb),
\]
have appeared several times in the numerical literature \cite{OK1,OK2}.

\begin{theorem}\label{thm:main} Let $F,\Wb$ satisfy (A1-4) and (B1-5) (see Section \ref{sec:not}). Then there is a representative for $\Wb$ whose boundary admits the decomposition
	\[
	\p \Wb = \p^* \Wb \cup Z_{AC} \cup Z_{C},
	\]
	and has the following properties:
	\begin{enumerate}
		\item The set $\p^* \Wb$ is the reduced boundary; it is relatively open in $\p \Wb$, and it locally coincides with the graph of an analytic function.
		\item The set $Z_{AC}$ is a relatively open subset $\p \Wb \sm \p^* \Wb$ containing all of the points $x$ where
		\[
		\liminf_{r\searrow 0} \frac{|B_r(x) \cap \Wb| }{|B_r|} < 1.
		\]
		This set has a Hausdorff dimension of at most $n-3$.
		\item The set $Z_C$ consists of points $x$ in $\p \Wb$ with
		\[
		\lim_{r\searrow 0} \frac{|B_r(x) \cap \Wb| }{|B_r|} = 1.
		\]
		\item There is an orthonormal set of eigenfunctions $\{u_j\}$ of a subset $\Wb$, with eigenvalues at most $\l_N(\Wb)$, which are all Lipschitz continuous. In addition, there are nonzero numbers $\xi_j$
		so that at any point $x\in \p^* \Wb$ we have
		\begin{equation}\label{eq:el2}
		\sum_j \xi_j (u_j)_\nu^2 (x) = 1.
		\end{equation}
		Here $(u_j)_\nu$ represents the partial derivative of $u_j$ in the direction normal to $\Wb$ at $x$.
	\end{enumerate}
\end{theorem}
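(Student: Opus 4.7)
\emph{Proof plan.} The strategy is a vanishing viscosity approximation. The fundamental difficulty under (A1-4)/(B1-5) is that $F$ may depend trivially on some of the $\l_j$ (e.g.\ $F = \l_N$), so that the naive shape derivative produces no usable Euler--Lagrange condition on $\p \Wb$. One natural remedy is to replace $F$ by a perturbation such as
\[
F_\e(\l_1,\ldots,\l_N) = F(\l_1,\ldots,\l_N) + \e \sum_{j=1}^N \l_j,
\]
which is $C^1$ and strictly increasing in every argument, and to use the existence theory of \cite{Bucur,MP} to obtain minimizers $\Wb_\e$ of $F_\e(\l_1(\W),\ldots,\l_N(\W)) + |\W|$. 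The plan is to derive regularity and Euler--Lagrange estimates for $\Wb_\e$ that are uniform in $\e$, and then take $\e \searrow 0$ along a subsequence converging to some minimizer $\Wb$ of the original problem.

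For each fixed $\e > 0$, I would run a free-boundary program for $\Wb_\e$. Because $F_\e$ is strictly monotone, the classical inner shape variation $\Wb_\e \mapsto (\mathrm{id} + t \Phi)(\Wb_\e)$ with $\Phi \in C^\8_c(\R^n)$ gives a shape derivative of the spectral part as a combination of Hadamard terms $-\int_{\p^*\Wb_\e} (u_j^\e)_\nu^2 \, \Phi \cdot \nu$, once the $u_j^\e$ are known to be Lipschitz (via the argument of \cite{BMPV}) and $\p^*\Wb_\e$ carries enough regularity for the trace to make sense. In the presence of eigenvalue multiplicity, the coefficients $\xi_j^\e$ arise by diagonalizing $\p_j F_\e$ on each degenerate subspace, and every $\xi_j^\e \ge \e > 0$. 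Balancing against the shape derivative of the volume term produces $\sum_j \xi_j^\e (u_j^\e)_\nu^2 \equiv 1$ on $\p^* \Wb_\e$. Now standard Alt--Caffarelli-type arguments --- ACF monotonicity, density estimates, Weiss-type monotonicity and dimension reduction --- should yield: $\p^* \Wb_\e$ is a $C^{1,\a}$ graph, upgraded to analytic by partial hodograph applied to the overdetermined system $\D u_j^\e + \l_j^\e u_j^\e = 0$ in $\Wb_\e$, $u_j^\e = 0$ on $\p \Wb_\e$, $\sum_j \xi_j^\e (u_j^\e)_\nu^2 = 1$ on $\p^* \Wb_\e$; a singular set $Z_{AC}^\e$ where blow-ups have density strictly less than $1$, of Hausdorff dimension at most $n-3$; and a residual cusp set $Z_C^\e$ of density-$1$ points.

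For the passage to the limit, uniform perimeter bounds, uniform Lipschitz estimates on the $u_j^\e$, and the capacity framework of \cite{Bucur,MP} combine to give: some $\Wb_{\e_k} \to \Wb$ in Hausdorff distance and in $L^1$, and the eigenfunctions converge locally uniformly and in $H^1_0$ to eigenfunctions of $\Wb$. The local regularity statements for $\p^*\Wb$ and the bounds on $Z_{AC}$ should transfer from the approximations by standard compactness arguments, once the reduced boundary of the limit is identified with the limit of reduced boundaries, and the decomposition $\p\Wb = \p^*\Wb \cup Z_{AC} \cup Z_C$ is then read off from the density dichotomy. The main obstacle will be passing \eqref{eq:el2} to the limit: the coefficients $\xi_j^\e$ are bounded above essentially by $\|\nabla F\|_\8 + \e$ but only below by $\e$, so without further work the right-hand side $1$ in the Euler--Lagrange identity could disappear when $\e \searrow 0$. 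I would overcome this via a quantitative blow-up analysis at a reduced boundary point of $\Wb$: the ACF estimate forces some subcollection of $\{u_j^\e\}$ to have nontrivial, linearly independent blow-ups, which pins down the correct normalization and guarantees that a nonzero subcollection of $\xi_j$ survives in the limit. A related delicate point is choosing the orthonormal eigenbases coherently through the limit, which requires spectral continuity up to multiplicity; that is where the bulk of the technical work should lie.
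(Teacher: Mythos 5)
Your overall strategy (perturb $F$ to make it nondegenerate, run a free-boundary argument on the approximate minimizers, and pass to the limit) is indeed the paper's strategy, but the specific approximation you chose has a fatal flaw, and you also omit a device that the paper needs to make the limiting argument give a statement about the \emph{given} $\Wb$.

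\emph{The choice $F_\e = F + \e\sum_j\l_j$ does not make the approximate problem tractable, because it does not force the approximate minimizers to have simple spectrum.} Your plan to write the shape derivative of the spectral part as a sum of Hadamard terms $-\int_{\p^*\Wb_\e}(u_j^\e)_\nu^2\,\Phi\cdot\nu$ is only valid where each $\l_j$ is a differentiable function of the domain, i.e., where the eigenvalue is simple. At a multiple eigenvalue, $\W\mapsto\l_j(\W)$ has only directional derivatives, and a nondecreasing function of the eigenvalues has a shape derivative that is a min-max of a quadratic form in the vector field; the first-order optimality condition is a variational inequality involving the whole degenerate eigenspace, not an Euler--Lagrange identity. (This is precisely why, as the paper notes, direct shape variations on $\Wb$ give ``underwhelming results.'') Your sentence ``diagonalizing $\p_j F_\e$ on each degenerate subspace'' is hand-waving past exactly this obstruction, and your perturbation $\e\sum_j\l_j$ is permutation-symmetric, so it provides no mechanism to break ties: a minimizer $\Wb_\e$ of $F_\e+|\W|$ may perfectly well have $\l_{N-1}(\Wb_\e)=\l_N(\Wb_\e)$. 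Compare this with the paper's $F_p$ in \eqref{eq:fp}, which combines the nonlinear change of variables $\t_{k,p}=(\sum_{l\le k}\k_l^p)^{1/p}$ with a linear term $\frac1p\sum_k(N+1-k)\k_k$ specifically so that $\p_{\k_k}F_p>\p_{\k_{k+1}}F_p$ whenever $\k_k=\k_{k+1}$ (Proposition~\ref{prop:fp2}(3)); by \cite{KL}[Remark 3.5] this forces $\l_1(\W_p)<\cdots<\l_{N+1}(\W_p)$ for every $p$-minimizer, which is what makes the first-variation computation (in the viscosity sense) possible.

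\emph{Your approximation also has no mechanism to converge to the given $\Wb$.} Minimizers of $F_\e+|\cdot|$ could converge (in weak $\gamma$) to any minimizer of $F+|\cdot|$, not necessarily to the set $\Wb$ fixed in the hypotheses. The paper adds a penalty $E(\W)=\int_\W s\max\{d(x,\Wb),1\}+\int_{\W^c}s\max\{d(x,\Wb^c),1\}+\chi(|\Wb|-|\W|)$ to the functional precisely so that $E(\W_p)\to 0$ forces the limit back to (a subset of) $\Wc$; combined with (B3) and Bucur's torsion-subsolution estimate (Corollary~\ref{cor:smallbdry}), this gives $\W_\8\ss\Wb$ and the Hausdorff convergences in Theorem~\ref{thm:lim}. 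Without such a term, your passage to the limit does not say anything about $\p\Wb$.

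Two smaller remarks. Your worry that the $\xi_j^\e$ might all vanish in the limit is the right worry, but the paper does not resolve it via an ACF-style blow-up; it uses Lemma~\ref{lem:strictinc} applied to $F_p$ at $\W_p$ (valid because the approximate problems satisfy the appropriate two-sided almost-minimality) to get $c\le\sum_k\xi_{k,p}\le C$ uniformly in $p$ (Lemma~\ref{lem:moreest}), and then assumption (A4) together with the specific structure of $F_p$ identifies exactly which indices $k\in\cK$ survive (Theorem~\ref{thm:lim}(5)). Also, the limiting functions $u_k$ are eigenfunctions of the open set $\Wb\sm Z$, not necessarily of $\Wb$ itself (Theorem~\ref{thm:lim}(6) and Remark~\ref{rem:smallermin}); your statement that ``the eigenfunctions converge \ldots to eigenfunctions of $\Wb$'' is not what is proved, and the distinction matters for the formulation of item (4) of the theorem.
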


Recently, we published a companion paper \cite{KL}, which addresses a similar problem but makes the extra assumption that $F$ depends on all of the eigenvalues $\l_1(\Wb),\ldots, \l_N(\Wb)$ for some $N$, and is uniformly increasing in every parameter. There we termed this the ``nondegenerate'' case, and developed an approximation argument and a free boundary method to prove a result similar to \ref{thm:main}, but with $Z_C$ empty. See also the paper \cite{MTV}, which proves a similar result with different techniques. This paper builds on the method in \cite{KL}, but removes the nondegeneracy assumption used there.

To explain briefly why the ``degenerate'' case treated here is (dramatically) harder than the nondegenerate one, there are two free boundary estimates which are typically available in this kind of problem, and which date back to the foundational work \cite{AC}. One is an ``upper bound'' on the eigenfunctions, here meaning that
\[
\max_{j, \R^n} |\n u_j|\leq C
\]
where the $u_j$ are as in Theorem \ref{thm:main}(4). We prove that here. The second, however, is a ``lower bound,'' which would say that for all $x\in \p \Wb$ and $r$ small,
\[
\max_{j,B_r(x)} | u_j| \geq c r.
\]
We prove an estimate like this in the nondegenerate case in \cite{KL}. Together, these estimates ensure that $\p \Wb$ is tame from a geometric measure theory perspective, and aid various blow-up and compactness arguments. They work to  ensure that information may be passed freely between the set $\p \Wb$ and the functions $u_j$. In the degenerate case, this lower bound is unavailable. Therefore, we have to use various unusual methods (often involving the Weiss formula) in place of standard free boundary techniques.

The general method of proof is based on solving some auxiliary minimization problems
\begin{equation}\label{eq:main}
\min \{ F_p(\l_1(\W),\ldots,\l_N(\W)) + |\W|  \},
\end{equation}
where $F_p$ tends to $F$ as $p\rightarrow \8$. $F_p$ is chosen so that it has an additional structural property: the minimizers $\W_p$ of \eqref{eq:main} all have $\l_1(\W_p)<\ldots<\l_{N+1}(\W_p)$; their eigenvalues are simple. This structural property means that we can successfully perform domain variation arguments to obtain a first variation formula for $\W_p$, analogous to \eqref{eq:el2}. We construct and study the sets $\W_p$ in Section \ref{sec:wp}.

The key step, then, is to pass to the limit. This requires obtaining uniform estimates on the $\W_p$, the most important of which (the uniform Lipschitz bound on the eigenfunctions) is shown in \ref{sec:lip}. There is a secondary estimate, due to D. Bucur \cite{Bucur}, which we discuss in Section \ref{sec:db}, which is needed for technical reasons. These are put to use in Section \ref{sec:lim}, where we pass to the limit and explain in what manner the $\W_p$ converge to the original minimizer $\Wb$. Further arguments, related to pushing the formula \eqref{eq:el2} to the limiting set, are presented in Section \ref{sec:info}. Finally the regularity aspects of Theorem \ref{thm:main} are proved in Section \ref{sec:reg}.

\section{Notation, Vocabulary, and Background}\label{sec:not}

The lowercase $n$ will refer to the dimension of the ambient Euclidean space $\R^n$ in which our problem is set, while the uppercase $N$ will denote the largest eigenvalue appearing in our minimization problems. For a measurable set $E$, $|E|$ denotes the Lebesgue measure. The Euclidean balls $B_r$ are centered around $0$, while $B_r(x)$ are centered around $x$. Constants will be denoted by $C$, and may vary from line to line; statements of theorems and lemmas will explain what the constants depend on, and this is only made explicit in the proof when there may be ambiguity or special emphasis is merited.

Any Sobolev function is identified with its quasicontinuous representative. Any quasiopen set is identified with a finely open representative $E$ with the property that if $\text{cap}(B_r(x)\sm E)=0$, then $B_r(x)\ss E$. The term quasi-everywhere, abbreviated q.e., means up to a set of capacity $0$. A quasiopen set will always assumed to be bounded, but we will not specify diameter explicitly unless relevant to the specific argument. For a quasiopen set $\W$,  the space $H^1_0(\W)$ is the space of functions $u\in H^1(\R^N)$ which are $0$ outside of $\W$ quasi-everywhere
% while the larger space $\tilde{H}^1_0$ consists of functions in $H^1(\R^N)$ which vanish outside of $\W$ almost everywhere.

The subscript $k$ will typically be reserved for enumerating the eigenvalues, and $\l_k(\W)$ denotes the $k$-th Dirichlet eigenvalue of the quasiopen set $\W$: in other words,
\[
\l_k(\W) = \inf_{E\ss H^1_0(\W), \dim E = k} \max_{v\in E\sm \{0\}} R[v],
\]
where $R$ is the Rayleigh quotient
\[
R[v] = \frac{\int_\W |\n v|^2 }{\int_\W v^2}.
\]
Given a (bounded) quasiopen set $\W$, one may always produce a collection of eigenfunctions $v_k$, each associated with the eigenvalue $\l_k(\W)$. Unless stated otherwise, any collection of eigenfunctions will be assumed to be orthonormal. Note that such a collection is not unique, and at times it will be important to select particular families of eigenfunctions; this will be detailed explicitly.

For the duration of the paper, we will fix a function $F : (0,\8)^N \rightarrow [0,\8)$. Here are the standing assumptions placed on $F$:
\begin{enumerate}[({A}1)]
	\item $F$ is (locally) continuous.
	\item $F$ is nondecreasing in each parameter:
	\[
	F(\k_1+p_1,\ldots,\k_N + p_N) \geq F(\k_1,\ldots,\k_N) \qquad \forall \{p_k\}, p_k\geq 0.
	\]
	\item $F$ is (locally) Lipschitz continuous.
	\item Say that for some $\k\in (0,\8)^N$ and $t>0$, $F$ has $F(\k) = F(\k_1,\ldots, \k_k - t,\ldots,\k_N)$. Then
	\[
	\lim_{h\searrow 0} \frac{1}{h^{N-1}}\int_{[0,h]^{N-1}} |F(\k_1 +\z_1,\ldots,\k_k + h, \ldots , \k_N +\z_N) - F(\k_1 +\z_1,\ldots,\k_k ,\ldots, \k_N +\z_N)| d\z_1\ldots d\z_N = 0.
	\] 
\end{enumerate}
We will also fix a quasiopen set $\Wb$, which has the following properties:
\begin{enumerate}[({B}1)]
	\item $\Wb$ minimizes
	\[
	F(\l_1(\W),\ldots,\l_N(\W)) + |\W|
	\]
	over all quasiopen sets $\W$.
	\item $\l_N(\W)<\l_{N+1}(\W)$, and either $F$ does not depend on $\l_k$ or $\l_k<\l_{k+1}$ for every $k < N$.
	\item For any quasiopen $\W$ satisfying (B1) with $|\W \sm \Wb|=0$, we have $\text{cap}(\W\sm \Wb)=0$.
	\item $\Wb$ is bounded.
	\item $|\Wb|>0$.
%	\item $|\p \Wb|=0$.
\end{enumerate}

Some comments about these assumptions are in order:
\begin{enumerate}
	\item We will write $F(\W)$ for $F(\l_1(\W),\ldots,\l_N(\W))$. The terms \emph{minimizer of} $F$ and $F$ \emph{minimizer} will be used to refer to sets minimizing $F(\W)+|\W|$ over quasiopen sets.
	\item Given a function $F$ and a set $\Wb$ satisfying (B1-5), it is enough that in place of (A1) $F$ is just continuous at the point $(\l_1(\Wb),\ldots,\l_N(\Wb))$. This is because if $\Wb$ is an $F$ minimizer, it is also an $F'$ minimizer for any $F'$ which has $F'(\Wb)=F(\Wb)$ and $F'\geq F$ pointwise. It is then possible to replace $F$ by a larger $F'$ which agrees with $F$ on the spectrum of $\Wb$ but satisfies (A1) globally. A typical example of such an $F'$ is the following sup-convolution, where $\w$ is a modulus of continuity such that $|F(\k) - F(\Wb)|\leq \w(|\k - (\l_1(\Wb),\ldots,\l_N(\Wb))|)$:
	\[
	F'(\k) = \sup_{p_k\geq 0} F(\k + p) - \w(|p|).
	\]
	A similar statement is true for (A3).
	\item The assumptions (A1-4) are rather general, and include many examples of special interest. The book \cite{H} discusses some of these examples, including $F$ which are linear, or are linear in the powers (or reciprocals), of the eigenvalues. The key point we wish to emphasize here is that unlike in our earlier work \cite{KL}, here $F$ may depend on just some subset of the eigenvalues, and not depend on some of them at all. The special case $F(\W)=\l_N(\W)$ is the canonical example of this, and the reader might find it useful to focus on this example for concreteness. This case already captures most of the challenges of the general problem, and our results are as new for that particular $F$ as for any other.
	\item When showing existence of minimizers, usually an assumption like (A2), or sometimes even a stronger version of it, is required (although see \cite{BBF}). Omitting it dramatically changes the character of the problem at a global level. Assumption (A1) is not necessary for showing existence of minimizers; usually lower semicontinuity is all that is required. However, (A1) appears necessary for regularity, at least when using only local arguments. One way to see this is to consider any set $\Wb$ which minimizes $F(\W)+|\W|$ over all competitors $\W$ which have $\Wb\ss \W$, for some smooth $F$: then setting
	\[
	\begin{cases}
	 F'(\k) = F(\k) + C & \k_k > \l_k(\Wb)\\
	 F'(\k) = F(\k) & \text{otherwise,}
	\end{cases}
	\] 
	for a sufficiently large $C$ (say larger than $|\Wb|$) we have that $\Wb$ minimizes $F'$ over all $\W$ with $\W \ss \Wb$ as well. However, such one-sided minimizers generally need not have smooth boundary.
	\item Clearly  (A3) implies (A1). We separate these assumptions because some of our arguments only require (A1-2), not (A3-4), and the reason why we need (A3) at all is somewhat technical. It would be reasonable to suppose that (A3) is not actually required for regularity of $\p \Wb$.
	\item The assumption (A4) says essentially that at points where $F$ is constant in the ``backward'' direction in one variable, the partial derivative of $F$ in that variable is approximately continuous and has value $0$. There are two useful situations in which a function would automatically satisfy (A4): any locally $C^1$ function $F$ will satisfy it, but so will any $F$ which, for each $k$, is either strictly increasing in $\k_k$ or does not depend on $\k_k$ at all. This second situation seems to cover most of the common examples.
	\item In the shape optimization literature, it is more common to consider the volume-constrained problem
	\[
	\min_{|\W|=1} F(\W).
	\]
	Provided $F$ is a homogeneous function, the two problems are equivalent up to rescaling.
	\item The existence of an $\Wb$ satisfying (B1) is very much nontrivial, and we do not investigate it here. The works \cite{Bucur,MP} both prove that minimizers (over the class of arbitrary quasiopen sets) exist and are bounded, giving (B4) as well. 
	\item The assumption (B2) may be made without any loss of generality, and is just a notational convenience. Indeed, if $\l_N(\Wb) = \l_{N+1}(\Wb) = \ldots = \l_{N'}(\Wb)<\l_{N'+1}(\Wb)$, we simply replace $N$ by $N'$, and extend $F$ to not depend on the newly added eigenvalues. Likewise, if $\l_k = \l_{k+1}$ for some $k<N$, we replace $F$ by $F'(\l_1,\ldots,\l_{k},\l_{k+1},\ldots,\l_N)=F(\l_1,\ldots,\l_{k+1},\l_{k+1},\ldots,\l_N)$. Notice this has nothing in common with assuming that $\l_k<\l_{k+1}$ for all (or some) $k<N$, which is a very strong assumption conjectured to be never valid at least in some cases (see \cite{H}). 
	\item Whether the assumption (B3) is true in general is unclear. However, if we set $\mathcal{U}$ to be the collection of all quasiopen $U$ (it is important to take finely open representatives here) which have $|U\sm \Wb|=0$, then there exists a countable sub-collection $\mathcal{U}_0$ such that
	\[
	\text{cap}(\cup_{U\in \cU} U \sm \cup_{U\in \cU_0}U) = 0;
	\]  
	see \cite{MZ}[Theorem 2.146]. Setting $\W = \cup_{U\in \cU_0}U$, we have that $\text{cap}(\Wb\sm \W)=0$ and $|\W \sm \Wb|=0$. These properties imply that $\W$ is also an $F$ minimizer, and that it satisfies (B3).	Thus a reasonable approach to studying an arbitrary minimizer $\Wb$ is to first replace it by $\W$ and study that, and then return to $\Wb$. This replacement is analogous to replacing a nice open set by the interior of its closure. See Remark \ref{rem:smallermin} for additional discussion.
	\item Finally, while the assumption (B5) appears self-explanatory, we note that this is the only non-degeneracy assumption that we make. It would perhaps be reasonable to expect that some strict, uniform, version of (A2) would be needed for regularity purposes. It turns out, however, that any $F$ will only admit nontrivial minimizers with spectrum located at points where $F$ is strictly increasing. That is demonstrated in the following lemma.
\end{enumerate}

\begin{lemma}\label{lem:strictinc} Let $F$ satisfy (A1-2), and $\Wb$ satisfy (B4-5).
	\begin{enumerate}
		\item If $\Wb$ also satisfies (B1), then
			\begin{equation}
			K^{-1}\leq \l_1(\Wb)\leq \cdots \leq \l_N(\Wb)\leq K.
			\end{equation}
			Here $K$ depends on $n,N,F,|\Wb|$, and $\diam \Wb$.
		\item If $\Wb$ has the property that for any $\W$ with $\l_k(\W)\geq \l_k(\Wb)$, $k\in [1,N]$,
		\begin{equation}\label{eq:strictinch1}
		F(\Wb) \leq F(\W) - \frac{1}{4} (|\Wb| - |\W|),
		\end{equation}
		then there are numbers $c,s_0$ such that for any $s\in (0,s_0)$, we have
		\begin{equation}\label{eq:strictinc}
		\frac{F(\l_1(\Wb)+s,\ldots, \l_N(\Wb)+s) - F(\Wb)}{s}\geq c.
		\end{equation}
		The values of $c$ and $s_0$ will depend on $n,N,|\Wb|$, and $\l_N(\Wb)$.
		\item If $\Wb$ has the property that for any $\W$ with $\l_k(\W)\leq \l_k(\Wb)$, $k\in [1,N]$,
		\begin{equation}\label{eq:strictinch2}
		F(\Wb) \leq F(\W) + 4 (|\W| - |\Wb|),
		\end{equation}
		then for any $s\in (0,s_0)$, we have
		\begin{equation}\label{eq:lip}
		\frac{F(\l_1(\Wb)-s,\ldots, \l_N(\Wb)-s) - F(\Wb)}{-s}\leq c.
		\end{equation}
		Here $c$ and $s_0$ are as in (2).
	\end{enumerate}
\end{lemma}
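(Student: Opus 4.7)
The plan is to construct explicit competitor sets against which to test $\Wb$ in the given hypotheses, thereby extracting information about $F$ near the spectrum of $\Wb$. For parts (2) and (3) the competitor is simply a dilate $\alpha\Wb$, which perturbs all $N$ eigenvalues simultaneously by a controlled amount.

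For (1), the lower bound $\l_1(\Wb)\geq K^{-1}$ is immediate from the Faber--Krahn inequality $\l_1(\Wb) \geq c(n)|\Wb|^{-2/n}$. For the upper bound on $\l_N(\Wb)$, I would use (B1) to compare $\Wb$ to a ball $B$ of radius $\diam\Wb$ containing $\Wb$: the bound $\l_k(B)\leq C(n,k)/\diam(\Wb)^2$ together with (A1) controls $F(B)+|B|$ by a constant depending only on $n,N,F,\diam\Wb$, and minimality then bounds $F(\Wb)$. To convert this bound on $F(\Wb)$ into a bound on $\l_N(\Wb)$ itself---which is not automatic, since $F$ may fail to be strictly monotone in $\l_N$---I would invoke the sup-convolution construction from comment (2) following (A1--4) to replace $F$ by a pointwise larger function agreeing with $F$ at the spectrum of $\Wb$ and coercive in every argument, without disturbing the minimizer status of $\Wb$.

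For (2) and (3), the test set is $\W=\alpha\Wb$, giving $\l_k(\W)=\l_k(\Wb)/\alpha^2$ and $|\W|=\alpha^n|\Wb|$. In (2), $\alpha\in(0,1)$, so $\l_k(\W)\geq \l_k(\Wb)$, and the hypothesis \eqref{eq:strictinch1} yields
\[
\frac{|\Wb|}{4}(1-\alpha^n) \leq F\1\l_1(\Wb)/\alpha^2,\ldots,\l_N(\Wb)/\alpha^2\2 - F(\Wb).
\]
I would then set $\alpha^2 = \l_N(\Wb)/(\l_N(\Wb)+s)$, which makes $\l_N(\Wb)/\alpha^2 = \l_N(\Wb)+s$, while the ordering $\l_k(\Wb)\leq \l_N(\Wb)$ forces $\l_k(\Wb)/\alpha^2 \leq \l_k(\Wb)+s$ for each $k<N$. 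Monotonicity (A2) then bounds the right-hand side by $F(\l_1(\Wb)+s,\ldots,\l_N(\Wb)+s)-F(\Wb)$, and the expansion $1-\alpha^n \sim ns/(2\l_N(\Wb))$ as $s\searrow 0$ produces \eqref{eq:strictinc} with $c$ of order $n|\Wb|/\l_N(\Wb)$. Part (3) is dual: take $\alpha>1$, apply \eqref{eq:strictinch2}, set $\alpha^2=\l_N(\Wb)/(\l_N(\Wb)-s)$, and run monotonicity in the opposite direction.

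The computations for (2) and (3) are essentially bookkeeping once the dilation is chosen, so I expect the main obstacle to be the upper bound on $\l_N(\Wb)$ in (1). The difficulty is that a direct ball test yields only a bound on $F(\Wb)$, which is insufficient when $F$ is constant or very flat in the $\l_N$ direction near $(\l_1(\Wb),\ldots,\l_N(\Wb))$---the extreme case where $F$ does not depend on $\l_N$ at all being the most instructive. The sup-convolution replacement resolves this, but some care is needed to verify that the enlarged $F$ continues to respect the standing assumptions used elsewhere.
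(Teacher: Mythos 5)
Parts (2) and (3), along with the lower bound in (1), match the paper's argument essentially verbatim: the dilation competitor and the normalization $s=\l_N(\Wb)(\alpha^{-2}-1)$ are exactly what the paper uses, and the Taylor expansion of $1-\alpha^n$ closes the estimate the same way.

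The upper bound on $\l_N(\Wb)$ in part (1) is where the proposal breaks down. The sup-convolution $F'(\k)=\sup_{p\geq 0}F(\k+p)-\omega(|p|)$ does not make $F$ coercive: by (A2), if $F(t,\ldots,t)$ has a finite limit $F_\infty$ as $t\to\infty$, then $F\leq F_\infty$ everywhere, so $F'$ is bounded by the same $F_\infty$ and remains non-coercive. Moreover, any coercive $F'$ agreeing with $F$ at the (unknown) spectrum of $\Wb$ would have to be constructed with reference to $\l_N(\Wb)$ itself, so the resulting bound on $\l_N(\Wb)$ would be circular rather than a function of $n,N,F,|\Wb|,\diam\Wb$ as required. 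The ingredient your sketch is missing is the Ashbaugh--Benguria-type bound $\l_N(E)/\l_1(E)\leq C(n,N)$ valid for all quasiopen $E$ (cited to \cite{MP} in the paper); combined with (A2) it gives $F(\Wb)\geq F(\l_N(\Wb)/C,\ldots,\l_N(\Wb)/C)$, reducing the problem to controlling $F$ along the diagonal. One then splits into two cases: if $F(t,\ldots,t)\to\infty$, the minimality estimate $F(\Wb)\leq F(B_1)+|B_1|$ bounds $\l_N(\Wb)$ immediately; if instead $F(t,\ldots,t)\to F_\infty<\infty$, taking shrinking balls as competitors gives $F(\Wb)\leq F_\infty-|\Wb|$, and the fixed gap $|\Wb|>0$ from (B5) again bounds $\l_N(\Wb)$. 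Once $F$ is permitted to degenerate in $\l_N$, this ratio bound is the only available mechanism for transferring control from the low eigenvalues to the high ones; without it the ball test yields only a bound on $F(\Wb)$, which, as you correctly identify, is not enough.
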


\begin{proof}
	First, for (1) note that $|\Wb|\leq C(\diam \Wb)^n \leq C$; this and the Faber-Krahn inequality imply that $\l_1(\W)\geq C$. To bound $\l_N(\Wb)$, we recall that for any quasiopen set $E$, the ratio $\l_N(E)/\l_1(E)\leq C=C(n,N)$ (see \cite{MP}). There are two cases now: if $F(t,\ldots,t)\rightarrow \8$ as $t\rightarrow \8$, it follows immediately that $F(\Wb) \leq F(B_1) +|B_1|$, which controls  $\l_N(\Wb)$. If instead $F(t,\ldots,t)\rightarrow F_\8<\8$ as $t\rightarrow \8$, we have that
	\[
	\inf_\W F(\W) + |\W|\leq F_\8
	\] 
	by using smaller and smaller balls as competitors. This gives $F(\Wb)\leq F_\8 - |\Wb|$, which controls $\l_N(\Wb)$.
	
	For (2), consider a dilation $t\Wb$ as a competitor for $\Wb$, with $t<1$. Then we have
	\[
	F(\Wb) \leq F(t\Wb) + \frac{1}{4}(t^n - 1) |\Wb|\leq F(\l_1(\Wb)+s,\ldots,\l_N(\Wb)+s) + \frac{1}{4}(t^n -1) |\Wb|
	\]
	for $s= (t^{-2}-1)\l_N(\Wb)$, using \eqref{eq:strictinch1}  and then (A2). For $t$ close enough to $1$ (equivalently, $s\in (0,s_0)$), we have $1-t^n\geq c(1-t)\geq c s$, which implies \eqref{eq:strictinc}.
	
	The proof of (3) is identical, just using $t>1$ instead.
\end{proof}

\section{The Regularized Problem}\label{sec:wp}

Our analysis of the regularity of $F$ minimizers is based on solving approximate optimization problems, which are described in this section. They can be interpreted as a kind of regularization of the original problem. However, this analogy should be used cautiously: while these approximate problems do fall into the "nondegenerate" class considered in \cite{KL}, the key feature they exhibit is the presence of an Euler-Lagrange equation, or first variation formula, which cannot easily be verified in the original problem by direct domain variations. This is not really a statement of regularity, but rather a structural aspect of the approximation.

Let $G_p$ be the following averaged function:
\begin{equation}\label{eq:gp}
G_p(\k) = p^N \int_{[0,\frac{1}{p}]^N} F(\k + \z)d\z.
\end{equation}
This function has $G_p \geq F_p$, satisfies (A1-2), and has the additional property that it is continuously differentiable (but with bounds which do not remain uniform as $p\rightarrow \8$).

Set 
\begin{equation}\label{eq:tau}
\t_{k,p}=\t_{k,p}(\k_1,\ldots,\k_k) = \1\sum_{l=1}^k \k_{l}^p\2^{\frac{1}{p}},
\end{equation}
and then
\begin{equation}\label{eq:fp}
F_p(\k_1,\ldots,\k_N) = G_p (\t_{1,p},\t_{2,p},\ldots, \t_{N,p}) + \frac{1}{p} \sum_{k=1}^N (N+1-k)\k_k.
\end{equation}
The second term is a small perturbation, and helps avoid needless technical difficulties later. Fix $\chi$ to be a fixed smooth function which has $\chi(0)=0$, $\chi(t)> 0$ for $t\neq 0$, $-\frac{1}{2}\leq \chi' \leq \frac{1}{2}$, and $\chi'(0)=0$. We will be interested in sets $\W_p$ which minimize, for a sufficiently small but fixed $s$,
\[
F_p(\W) + |\W|+E(\W),
\]
where
\[
E(\W) =  \int_\W s \max \{d(x,\Wb),1\} + \int_{\W^c} s \max \{d(x,\Wb^c),1\} + \chi(|\Wb|-|\W|).
\]
 Such sets will be referred to as \emph{$p$-minimizers} below.
 
Let us record here the derivatives of $F_p$ with respect to each parameter:
\begin{equation}\label{eq:fpdir}
\p_{\k_k} F_p(\k) =  \frac{N+1-k}{p} +   \sum_{j=k}^N \1 \frac{\k_k}{\t_{k,p}} \2^{p-1} \cdot \p_{\k_j} G_p(\k).
\end{equation}

The most useful sense in which our various minimizers will satisfy Euler-Lagrange equations is in the viscosity sense, defined below.
\begin{definition}Given an open set $U$, a finite set $\{v_k\}_{k=1}^N$ of Lipschitz functions defined on $U$ and an open set $\W\ss U$ outside of which they vanish is said to be a \emph{viscosity solution with parameters} $\{\xi_k\}_{k=0}^N, C_0$ if it satisfies the conditions below. The $\xi_k$ should have $\xi_k \geq 0$ and constant for $k>0$, and $\xi_0$ a Lipschitz function taking values in $(\frac{1}{2},\frac{3}{2})$. 
	\begin{enumerate}[({V}1)]
		\item $|\triangle v_k|\leq C_0$ on $\W\cap U$ for each $k$.
		\item If there is a ball $B_r(y)\ss \W$ with $B_{2r}(y)\ss U$ such that $\{x\} =  \p B_r(y)\cap \p \W$, and a sequence of numbers $\b_k\geq 0$ such that
		\[
		|u_k(z)| \geq \b_k (\frac{y-x}{|y-x|}\cdot (z-x))^+ + o(|z-x|),
		\]
		then
		\[
		\sum_{k=1}^N \xi_k \b_k^2 \leq \xi_0(x).
		\]
		\item If there is a ball $B_r(y)\ss U\sm \W$  with $B_{2r}(y)\ss U$ such that $\{x\} =  \p B_r(y)\cap \p \W$, and a sequence of numbers $\b_k\geq 0$ such that
		\[
		|u_k(z)| \leq \b_k (\frac{y-x}{|y-x|}\cdot (z-x))^- + o(|z-x|),
		\]
		then
		\[
		\sum_{k=1}^N \xi_k \b_k^2 \geq \xi_0(x).
		\]
		\end{enumerate}
\end{definition}

Many properties of $p$-minimizers follow directly from our earlier work \cite{KL} or slight modifications of arguments there, and we begin exploring them in the propositions below. Most of these properties will not be uniform in $p$.

\begin{proposition}\label{prop:fp1} Let $F_p$ be as in \eqref{eq:fp}, and assume that $F$ satisfies (A1-2) while $\Wb$ satisfies (B1-5).
\begin{enumerate}
	\item $F_p$ is continuously differentiable in its parameters, and the partial derivatives $\p_{\k_k}F$ are all strictly positive. Moreover, the $F_p$ satisfy (A1-2). If $F$ satisfies (A3), then so does $F_p$, with Lipschitz constants on compact subsets of $(0,\8)^N$ bounded uniformly in $p$. 
	\item For any quasiopen set $\W$, $F_p(\W)\geq F(\W)$, and $\lim_{p\rightarrow \8} F_p(\W) = F(\W)$. 
	\end{enumerate}	
\end{proposition}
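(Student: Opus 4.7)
The strategy for both parts is to exploit two independent features of the construction: the regularity properties of the mollification $G_p$, and the asymptotic behavior of the $\ell^p$-norms $\tau_{k,p}$ as $p \to \infty$. Neither step is deep, but care is needed because (A3)-type bounds on $F_p$ must be uniform in $p$.

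For part (1), I would first analyze $G_p$ directly from the definition \eqref{eq:gp}. After the substitution $\eta = \kappa + \zeta$, $G_p$ becomes $p^N$ times an integral over the axis-aligned box $\prod_{k=1}^N [\kappa_k,\kappa_k + 1/p]$, so by the fundamental theorem of calculus one can differentiate in each $\kappa_k$ to obtain a difference of integrals of $F$ over two opposite $(N-1)$-dimensional faces of that box. Continuity of $F$ (which is (A1)) then gives $G_p \in C^1$, and nonnegativity of these face differences follows from (A2), yielding $\partial_{\kappa_k} G_p \geq 0$. Since $\tau_{k,p}$ is smooth on $(0,\infty)^k$ (and $(\partial \tau_{j,p}/\partial \kappa_k) = (\kappa_k/\tau_{j,p})^{p-1} \geq 0$ for $k\leq j$), the chain rule produces the stated formula \eqref{eq:fpdir} and hence $F_p\in C^1$. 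Strict positivity of every partial comes from the linear perturbation $\frac{1}{p}\sum_k(N+1-k)\kappa_k$, whose contribution is $\frac{N+1-k}{p}>0$. The properties (A1) and (A2) for $F_p$ are then immediate. For (A3), averaging preserves Lipschitz constants trivially, so $G_p$ inherits the (local) Lipschitz constant of $F$; meanwhile each $\tau_{k,p}$, as an $\ell^p$ norm restricted to compact subsets of $(0,\infty)^k$, is uniformly Lipschitz in $p$ because on such sets $\|\cdot\|_{\ell^p}$ is equivalent to $\|\cdot\|_{\ell^\infty}$ with constants bounded by $N$. Composing these gives a local Lipschitz bound on $F_p$ independent of $p$.

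For part (2), the inequality $F_p(\W)\geq F(\W)$ uses the monotonicity of $F$ twice. First, from $\tau_{k,p}^p = \sum_{l=1}^k \kappa_l^p \geq \kappa_k^p$ one gets $\tau_{k,p}\geq \kappa_k$ componentwise. Second, $G_p(\kappa)$ is an average of $F(\kappa+\zeta)$ over $\zeta\in[0,1/p]^N$, so by (A2), $G_p(\kappa)\geq F(\kappa)$. Combining these with the monotonicity of $G_p$ (already established) gives
\[
G_p(\tau_{1,p},\ldots,\tau_{N,p}) \geq G_p(\kappa_1,\ldots,\kappa_N) \geq F(\kappa),
\]
and the perturbation term only increases $F_p$. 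For the limit, note that $\kappa_k = \l_k(\W)$ is nondecreasing in $k$, so $\max_{l\leq k}\kappa_l = \kappa_k$ and consequently $\tau_{k,p}\to \kappa_k$ as $p\to\infty$. Continuity of $F$ and the shrinking of the averaging box force $G_p(\mu_p)\to F(\kappa)$ for any $\mu_p\to\kappa$; the perturbation $\frac{1}{p}\sum(N+1-k)\kappa_k$ clearly vanishes. These give the claimed pointwise limit.

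The only subtle point is keeping the Lipschitz constant in (A3) uniform in $p$; everything else is a direct computation from the definitions. It is worth recording that $G_p$ is $C^1$ only with $p$-dependent bounds (the derivative scales like $p\cdot\operatorname{osc} F$), which is why the proposition is deliberately careful to state that the differentiability is not uniform — this is consistent with the subsequent arguments, where smoothness of $F_p$ is used for each fixed $p$ but never quantitatively.
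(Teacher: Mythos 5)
Your proof is correct and takes essentially the same route as the paper, which simply reads off from the formula \eqref{eq:fpdir} that the $\frac{N+1-k}{p}$ term is strictly positive and the summands are nonnegative (via $\partial_{\kappa_j}G_p \geq 0$ from (A2)), and dispatches the rest as immediate from the definitions. You have filled in the details the paper leaves implicit — the Leibniz-rule computation for $\partial_{\kappa_k}G_p$, the chain rule through $\tau_{k,p}$, the observation that averaging preserves local Lipschitz constants and that $\kappa\mapsto\tau_{k,p}(\kappa)$ is uniformly Lipschitz in $p$ — all of which are needed and all of which you handle correctly. One small note: the displayed formula \eqref{eq:fpdir} in the paper has $\tau_{k,p}$ in the denominator inside the sum over $j$, but the correct expression (which you use) is $\tau_{j,p}$; this is a typo in the paper, not a discrepancy in your argument.
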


\begin{proof}
	For (1), this follows from \eqref{eq:fpdir}, noting that the first term is strictly positive, while the ones in the sum are nonnegative. That $F_p$ satisfies (A1-2) (and (A3) when $F$ does) comes from the definition \eqref{eq:fp}. In (2), the first is clear from the definitions, while the second follows as $\t_{k,p}\rightarrow \k_k$ as $p\rightarrow \8$.	
\end{proof}

\begin{lemma}\label{lem:ex} Let $F_p$ be as in \eqref{eq:fp}, $p\geq p_0$, and assume that $F$ satisfies (A1-2) while $\Wb$ satisfies (B1-5). Then:
	\begin{enumerate}
		\item There exists a quasiopen $p$-minimizer $\W_p$ for each $p$. 
		\item For this $\W_p$, $\l_N(\W_p)\leq C$, with the constant depending on $F$ and $\Wb$ but not $p$.
		\item This $\W_p$ also has $c \leq |\W_p| \leq C$, with the constants depending on $F$ and $\Wb$ but not $p$.
		\item We have
		\begin{equation}\label{eq:exc1}
		E(\W_p) + |F_p(\W_p) - F_p(\Wb)| + |F(\W_p) - F(\Wb)| \rightarrow 0
		\end{equation}
		as $p\rightarrow 0$.
	\end{enumerate} 
\end{lemma}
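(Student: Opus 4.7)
The plan is to apply the direct method of the calculus of variations to the functional $\cJ_p(\W) := F_p(\W) + |\W| + E(\W)$, using $\Wb$ as a canonical test competitor whose penalty contribution $E(\Wb)$ vanishes ($E$ is designed as a penalty on deviation from $\Wb$, supported on the symmetric difference). For part (1), I would take a minimizing sequence $\W_j$. The linear-in-distance growth in the first piece of $E$ forces $\W_j \sm \Wb$ to be concentrated in a fixed ball, and together with (B4) this places all $\W_j$ inside a common ball $B_R$. Standard Buttazzo--Dal Maso compactness then yields a subsequence $\gamma$-convergent to a quasiopen $\W_p$, and lower semicontinuity of each piece of $\cJ_p$ under $\gamma$-convergence (eigenvalues are $\gamma$-l.s.c.; $F_p$ is continuous and nondecreasing by Proposition \ref{prop:fp1}(1); $|\cdot|$ and $E$ are $L^1$-l.s.c.\ on $B_R$) ensures $\W_p$ is a $p$-minimizer. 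This largely mirrors the existence arguments in \cite{KL}, adapted to the modified functional.

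For parts (2) and (3), testing against $\Wb$ and using $E(\Wb)=0$ yields $\cJ_p(\W_p) \leq F_p(\Wb) + |\Wb|$, and by Proposition \ref{prop:fp1}(2) together with the continuity of $F$ one has $F_p(\Wb) \to F(\Wb)$, so the right-hand side is uniformly bounded in $p$. In particular $|\W_p|\leq C$ and $F_p(\W_p)\leq C$. For the bound on $\l_N(\W_p)$, recall the universal spectral ratio $\l_N(\W) \leq C(n,N)\l_1(\W)$ used in the proof of Lemma \ref{lem:strictinc}(1), so it suffices to bound $\l_1(\W_p)$. If $\l_1(\W_p)$ were unbounded along a subsequence, Faber--Krahn would force $|\W_p| \to 0$, but then $\chi(|\Wb|-|\W_p|)$ is bounded below by a positive constant depending only on $\Wb$ (since $\chi > 0$ off the origin), so $E(\W_p)$ is bounded below by a positive constant. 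For $p$ large enough that $F_p(\Wb) \leq F(\Wb) + 1$, this contradicts $\cJ_p(\W_p) \leq \cJ_p(\Wb)$. The same line of reasoning supplies the uniform lower bound $|\W_p|\geq c$ in (3).

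For part (4), I would chain the inequalities
\[
F(\Wb) + |\Wb| \leq F(\W_p) + |\W_p| \leq F_p(\W_p) + |\W_p| \leq F_p(\W_p) + |\W_p| + E(\W_p) \leq F_p(\Wb) + |\Wb|,
\]
using respectively that $\Wb$ minimizes $F(\cdot)+|\cdot|$ over quasiopen sets by (B1), that $F_p\geq F$ by Proposition \ref{prop:fp1}(2), that $E\geq 0$, and the minimality of $\W_p$ against $\Wb$. Since $F_p(\Wb)\to F(\Wb)$, the outermost terms converge to the same limit $F(\Wb)+|\Wb|$, sandwiching all intermediate quantities and forcing every pairwise difference in the chain to tend to $0$; this gives $E(\W_p)\to 0$, $F(\W_p)\to F(\Wb)$, and $F_p(\W_p)\to F(\Wb)$, and therefore (\ref{eq:exc1}). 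The main obstacle is the uniform lower bound on $|\W_p|$ in (3), especially in the case where $F$ is bounded at infinity: small $|\W_p|$ need not blow up $F_p(\W_p)$, so the lower bound must be extracted from the $E$-penalty together with the nondegeneracy (B5) and the positivity of $\chi$ away from the origin, rather than from eigenvalue blow-up.
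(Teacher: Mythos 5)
Your chain of inequalities $F(\Wb)+|\Wb|\leq F(\W_p)+|\W_p|\leq F_p(\W_p)+|\W_p|+E(\W_p)\leq F_p(\Wb)+|\Wb|$, together with $F_p(\Wb)\to F(\Wb)$ and the positivity of $\chi$ away from $0$, is exactly the engine the paper runs (its displays (3.exi5)--(3.exi4)), and it correctly delivers $E(\W_p)\to 0$, $|\W_p|\to|\Wb|$, and part (4). Your identification of the volume lower bound as coming from the $\chi$-term rather than eigenvalue blow-up is also right and matches the paper. However, two of the steps you take are not sound.

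First, in part (2) you deduce $|\W_p|\to 0$ from $\l_1(\W_p)\to\8$ ``by Faber--Krahn,'' but Faber--Krahn runs in the opposite direction: it gives $\l_1(\W)\geq c\,|\W|^{-2/n}$, i.e.\ small volume forces large $\l_1$, not the converse. A set can have arbitrarily large $\l_1$ and large volume (e.g.\ a long thin tube, or many disjoint small balls), so large $\l_1(\W_p)$ does not contradict the $\chi$-penalty. The paper instead uses the already-established estimate $|F(\W_p)-F(\Wb)|\to 0$: if $\l_N(\W_p)$ were large, the Ashbaugh--Benguria-type ratio bound $\l_N\leq C(n,N)\l_1$ forces \emph{every} $\l_k(\W_p)\geq \l_k(\Wb)+1$, and then Lemma \ref{lem:strictinc}(2) (which encodes the fact that a nontrivial minimizer must sit at a point where $F$ is strictly increasing along the diagonal) yields $F(\W_p)\geq F(\Wb)+c_1$, a contradiction. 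This is the mechanism you need; the nondegeneracy (B5) enters here, through Lemma \ref{lem:strictinc}, not through the $\chi$-term.

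Second, in part (1) you assert that the $E$-penalty ``places all $\W_j$ inside a common ball $B_R$,'' but it does not. The geometric penalty in $E$ only controls the \emph{measure} of $\W\sm\Wb$ far from $\Wb$; it does not prevent a minimizing sequence from having thin tendrils running off to infinity, and Buttazzo--Dal Maso weak-$\g$-compactness requires a fixed bounded container. The paper handles this by first minimizing over $\W\ss B_R$ (existence from \cite{DB}), then invoking the density/outer-minimality estimates of \cite{KL}[Lemmas 2.1, 4.2] to show the box-minimizers $\W^R$ have diameter bounded uniformly in $R$ (with a constant depending on $p$), and only then passing $R\to\8$ by weak $\g$-compactness. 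That intermediate uniform-diameter step is essential and is missing from your argument.
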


\begin{proof}
	First, we will prove a stronger version of (2) and (3). Let us set
	\[
	m(p) = \inf \{F_p(\W)+|\W|+ E(\W): \W \text{ quasiopen}\}. 
	\] 
	Then we claim, for each $\e>0$, that if $p\geq p_0$ and for some quasiopen set $\W$ we have $F_p(\W)+|\W|+ E(\W) - m(p) \leq \d$ for a sufficiently small $\d>0$, then $\l_N(\W) \leq C_1$ and $||\W|-|\Wb||\leq \e$ (here $C_1$ and $\d$ depend only on $F,\e$ and $\Wb$, but not $p$).
	
	Indeed, we know that
	\begin{align}
	F(\Wb) + |\Wb| + E(\W) & \leq F(\W) + |\W| + E(\W)\nonumber\\
	& \leq F_p(\W) + |\W|+E(\W)\nonumber\\
	& \leq F_p(\Wb) + |\Wb| + E(\Wb) + \d. \label{eq:exi5} 
	\end{align}
	Here the first line used the minimality of $\Wb$, the second line that $F_p \geq F$ (from Proposition \ref{prop:fp1}), and the third the definition of $\d$. Now if we use Proposition \ref{prop:fp1} as well as that $E(\Wb)=0$ we obtain that
	\begin{equation} \label{eq:exi3}
	F(\Wb) + |\Wb| + E(\W) \leq F(\W) + |\W| + E(\W) \leq F(\W) + |\Wb| + \d + o_p(1), 
	\end{equation}
	so 
	\begin{equation}\label{eq:exi1}
	E(\W) + |F(\W) - F(\Wb) + |\W| - |\Wb| | \leq \d + o_p(1).
	\end{equation}
	From the first term, this gives
	\begin{equation}\label{eq:exi4}
	\chi(|\Wb|-|\W|) \leq \d + o_p(1),
	\end{equation}
	which implies $||\Wb|-|\W|| \leq \e$ if $\d$ is sufficiently small. Then using \eqref{eq:exi1} again, we see that for $p$ large enough,
	\begin{equation}\label{eq:exi2}
	|F(\Wb) - F(\W)|\leq 2\d + \e.
	\end{equation}
	
	Assume for contradiction that $\l_N(\W) > C_1$ for some $C_1$ large. Then from the Ashbaugh-Benguria inequality (see \cite{MP}[Theorem B] for a sufficiently general version), we have $\l_k(\W)\geq c C_1$ for all $k\in [1,N]$ and a $c$ depending only on $N$ and $n$. Let us select $C_1$ so that $c C_1 \geq \l_k(\Wb) + 1$ for all $k\in [1,N]$; then applying Lemma \ref{lem:strictinc} we see that
	\[
	F(\W) \geq F(\l_1(\Wb)+1,\ldots,\l_N(\Wb)+1) \geq F(\Wb) + c_1
	\]
	for some $c>0$. Choosing $\d$ so that $2\d + \e < \frac{c_1}{2}$ gives a contradiction to \eqref{eq:exi2}, proving the claim.
	
	We will now sketch the proof of (1). Fixing $p>p_0$, our claim implies that it suffices to minimize $F_p$ over the class of sets $\W$ which have $\l_N(\W)\leq C_1$. Moreover, the volume bound in the claim and the Faber-Krahn inequality also imply that we may as well restrict to sets with $\l_1(\W) \geq c$, for some $c$ depending only on $|\Wb|$; set
	\[
	\cQ = \{\W \text{ quasiopen}: c\leq \l_1(\W), \l_N(\W)<C_1 \}.
	\]
	Using Proposition \ref{prop:fp1}, part (1), we see that $0<c(p)\leq \p_{\l_k}F(\W)\leq C(p)<\8$ for all $\W\in \cQ$.
	
	Let us find (for every $R$ large) $\W^R\ss B_R$ for which
	\[
	F_p(\W^R)+|\W^R|+ E(\W^R) = \inf \{F_p(\W)+|\W|+ E(\W) :\W \in \cQ , \W \ss B_R \}.
	\] 
	That such sets exist follows from \cite{DB}. We may apply \cite{KL}[Lemma 2.1] to these $\W^R$, and then argue as in \cite{KL}[Lemma 4.2] to show that up to selecting different $\W^R$, we may take $\W^R$ to be bounded uniformly in $R$ (by a constant $M$ that may depend on $p$). Using the compactness and lower semicontinuity results of \cite{DB} again, we  then take a limit, in the weak $\g$ sense, of $\W^R\rightarrow \W_p\ss B_M$.  This gives that
	\[
	F_p(\W_p)+|\W_p|+ E(\W_p) = \inf \{F_p(\W)+|\W|+ E(\W) :\W \in \cQ \} = m(p),
	\] 
	giving (1) (and also (2,3)).
	
	To see (4), we insert $\W = \W_p$ into \eqref{eq:exi1} and \eqref{eq:exi4}, with $\d=0$. This shows that
	\[
	\lim_{p\rightarrow \8} E(\W_p) + |F(\Wb)- F(\W_p)| + ||\Wb| - |\W_p|| = 0.
	\]
	Combining with \eqref{eq:exi5} gives \eqref{eq:exc1}.
\end{proof}

We continue with some further properties of the $\W_p$ constructed in Lemma \ref{lem:ex}, most of which are not uniform in $p$ (except as noted):

\begin{proposition}\label{prop:fp2} Let $F_p$ be as in \eqref{eq:fp}, $p>p_0$, and $F$, $\Wb$, $\W_p$ as in Lemma \ref{lem:ex}.
	\begin{enumerate}
		\item $\W_p$ is an open set.
		\item The first $N$ eigenfunctions $u_{k,p}$ of $\W_p$ are Lipschitz continuous, with constant $C=C(p)$. They also satisfy the lower bound: for any $x\in \W_p$,
		\[
		\sup_{k, y\in B_\r(x)}|u_{k,p}(y)| \geq c(p) \r
		\] 
		for every $\r<\r_0(p)$. Neither is claimed to be uniform in $p$.
		\item The partial derivatives $\p_{\k_k}F_p$ have the property that $\p_{\k_k}F_p(\k)>\p_{\k_{k+1}}F_p(\k)$ if $\k_k=\k_{k+1}$. It follows that $\l_1(\W_p)<\ldots<\l_{N+1}(\W_p)$ for any $p$-minimizer.
		\item Let $\xi_{k,p} = \p_{\k_k}F_p(\W_p)$ and $\xi_{0,p}(x) = 1 + s \max \{d(x,\Wb),1\}) - s \max \{d(x,\Wb^c),1\} + s \chi'(|\Wb|-|\W_p|)$. Then the set $\W_p$ and functions $\{u_{k,p}\}_{k}$ constitute a viscosity solution (with $U=\R^n$) with parameters $\{\xi_{k,p}\}, \max_{k,\W_p}\l_k(\W_p)|u_{k,p}|$.
		\item The Weiss energies
		\[
		W_p(x,r) = \frac{1}{r^n} \int_{B_r(x)\cap \W_p} (\sum_{k=1}^N \xi_{k,p}|\n u_{k,p}|^2 + \xi_{0,p}) - \frac{1}{r^{n+1}}\int_{\p B_r(x)} \sum_{k=1}^N \xi_{k,p}u_{k,p}^2 d\cH^{n-1}
		\]
		satisfy, for $s<r$ and $x\in \p \W_p$,
		\[
		W_p(x,r) - W_p(x,s) \geq \int_s^r \frac{2}{t^{n+2}}\int_{\p B_t(x)} \sum_{k=1}^N \xi_{k,p}(u_{k,p} - t (u_{k,p})_r)^2 - C(r-s),
		\]
		with $C$ independent of $p$.
		\item The reduced boundary $\p^* \W$ is composed of a union of $C^{1,\a}$ graphs, while the complement $\p \W\sm \p^* \W$ has Hausdorff dimension at most $n-5$. 
		\item  We have
		\[
		\sum_{k=1}^N \xi_{k,p} (u_{k,p})_\nu^2 (x) = \xi_{0,p}(x)  
		\]
		for every $x\in \p^*\W$. Here $\nu$ is the unit normal vector at $x$ to $\W_p$.
	\end{enumerate}	
\end{proposition}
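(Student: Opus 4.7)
The plan is to reduce each of (1)--(7) to the nondegenerate framework developed in the companion paper \cite{KL}, after first establishing item (3)---the genuinely new input---which forces simple eigenvalues for any $p$-minimizer and thereby legitimizes shape-variation arguments.

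For (3), I would compute $\p_{\k_k} F_p(\k) - \p_{\k_{k+1}} F_p(\k)$ at a point with $\k_k = \k_{k+1}$ directly from \eqref{eq:fpdir}. The perturbation $\frac{1}{p}\sum_k (N+1-k)\k_k$ contributes exactly $1/p > 0$; the remaining summand present only in $\p_{\k_k} F_p$ (the $j=k$ term) is nonnegative, since $\p_{\k_j} G_p \geq 0$ by (A2) applied to $G_p$ and the factor $(\k_k/\t_{k,p})^{p-1}$ is positive. To rule out a coincidence $\l_k(\W_p) = \l_{k+1}(\W_p)$ with $k \leq N$, I would select two orthonormal eigenfunctions spanning a two-dimensional subspace of the eigenspace and construct a Hadamard-type normal perturbation of $\p \W_p$ near a regular point that independently adjusts the two eigenvalues by prescribed small amounts. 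Running the first-order expansion in both of the labelings forced by the ordering $\l_k \leq \l_{k+1}$, the strict inequality above exhibits a perturbation that strictly decreases $F_p(\W) + |\W| + E(\W)$, contradicting minimality. The specific form of \eqref{eq:fp} was engineered precisely to make this argument run, in parallel to the "fully nondegenerate" setting of \cite{KL}.

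With simple eigenvalues in hand, items (1), (2), and (4) follow by the Alt--Caffarelli-type arguments carried out in \cite{KL}: openness rests on the nondegeneracy lower bound on $\max_k |u_{k,p}|$ near $\p \W_p$; the Lipschitz upper bound exploits the approximate sub-harmonicity of $\sum_k \xi_{k,p}|\n u_{k,p}|^2$ across the free boundary; and the viscosity condition (4) is derived from the standard inner/outer ball-push comparisons combined with Euler--Lagrange identities which, thanks to (3), are available because simple eigenvalues depend smoothly on smooth domain variations. The $E$-contribution to $\xi_{0,p}$ is read off by a direct first-variation calculation; since $|\chi'| \leq 1/2$ and $s$ is small, $\xi_{0,p}(x) \in (1/2, 3/2)$, as required by the viscosity definition.

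Finally, (5), (6), and (7) are the main regularity outputs of \cite{KL} applied to the viscosity solution produced in (4). The Weiss-type formula (5) comes from differentiating $W_p(x,r)$ in $r$; the only non-monotone contribution is the $x$-dependence of $\xi_{0,p}$, whose Lipschitz constant is $\lesssim s$ uniformly in $p$, giving the error term $C(r-s)$. The $C^{1,\a}$ regularity of $\p^* \W_p$ and the Hausdorff dimension estimate on $\p \W_p \sm \p^* \W_p$ in (6) then follow from the blow-up classification of \cite{KL}, and the pointwise identity (7) is the Euler--Lagrange consequence of (4) at regular boundary points. The main obstacle is (3)---both the algebraic strict inequality and, more delicately, the domain-variation argument ruling out multiplicity in the presence of the constraint $\l_k \leq \l_{k+1}$; once it is in hand, $E$ contributes only mild, $p$-uniform Lipschitz data, and the remaining items are essentially citations of \cite{KL}.
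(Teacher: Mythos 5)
Your proposal follows essentially the same route as the paper: establish the algebraic strict inequality in (3) from the explicit formula \eqref{eq:fpdir} (the perturbation term contributes exactly $1/p$ and the extra $j=k$ summand is $\geq 0$), deduce simplicity of the eigenvalues of any $p$-minimizer by a two-dimensional eigenspace domain-variation argument, and then invoke the nondegenerate machinery of \cite{KL} for items (1), (2), (4)--(7), with the Lipschitz $x$-dependence of $\xi_{0,p}$ producing the $C(r-s)$ error in (5). The only notable divergences from the paper are attributional: the paper gets the Lipschitz continuity in (2) directly from \cite{BMPV} rather than from a subharmonicity argument on $\sum_k \xi_{k,p}|\n u_{k,p}|^2$, and it cites \cite{KL}[Remark~3.5] for the simplicity consequence in (3) rather than re-deriving it via a Hadamard perturbation; these are harmless substitutions, not gaps.
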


\begin{proof}[Sketch of proof.]
		The openness of $p$-minimizers can be deduced from \cite{KL}[Lemma 2.5], which applies to $p$-minimizers. Property (2) comes from the main result in \cite{BMPV}, for the continuity, and \cite{KL}[Lemma 2.1], for the lower bound. At this point, it helps to observe that while Lemmas 2.6-2.7 of \cite{KL} are not valid for $p$-minimizers, the argument in Lemma 2.6 still goes through to show that $\W_p$ cannot have two different connected components with a common boundary point. Using this, all of the results of Section 3 apply equally well to $p$-minimizers.
		
		Property (3) comes from inspecting the formula \eqref{eq:fpdir}, while the consequence is from \cite{KL}[Remark 3.5]. We may now obtain (4) from \cite{KL}[Section 3], using that $\W_p$ has Property S. The monotonicity of the Weiss energy (5) may be verified as in the proof of \cite{KL}[Proposition 8.1]; the worse estimate on the error comes from the fact that $\xi_0$ is Lipschitz rather than constant, but the $C$ is uniform in $p$.
		
		Then (6) is the main result of \cite{KL}; the regularity of $\p^* \W_p$ follows from (4), (2), and \cite{KL}[Theorem 7.2], while the estimate on the size of $\p \W_p \sm \p^* \W_p$ uses \cite{KL}[Section 8] and (5). Finally, (7) is immediate from (6) and (4).
\end{proof}

We now turn our attention to estimates on $\W_p$ which are uniform in $p$. Let us start with a straightforward observation:

\begin{lemma}\label{lem:moreest} Let $F_p$ be as in \eqref{eq:fp}, $p>p_0$, and $F$, $\Wb$, $\W_p$ as in Lemma \ref{lem:ex}. The sets $\W_p$ enjoy the following properties, all with constants uniform in $p$:
	\begin{enumerate}
		\item \eqref{eq:lip} applies to $F_p$ and $\W_p$, with uniform constants. As a consequence,
		\[
		\sum_{k=1}^N \xi_{k,p} \leq c.
		\]
		\item \eqref{eq:strictinc} applies to $F_p$ and $\W_p$, with uniform constants. As a consequence,
		\[
		\sum_{k=1}^N \xi_{k,p} \geq c,
		\]
		so at least one of the $\xi_k$ is nonzero.
		\item $|u_{k,p}|\leq C$ for $1\leq k\leq N$
	\end{enumerate}
	Here $\xi_{k,p} = \p_{\k_k}F_p(\W_p)$.
\end{lemma}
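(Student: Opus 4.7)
The strategy for parts (1) and (2) is to repeat the dilation argument of Lemma \ref{lem:strictinc} for the pair $(F_p,\W_p)$, using the $p$-minimization property (which includes the extra $E$-term) in place of pure minimality, and then pass to the limit $s\searrow 0$ using the $C^1$ character of $F_p$ from Proposition \ref{prop:fp1}(1) to convert the resulting \eqref{eq:lip}/\eqref{eq:strictinc} into the claimed bounds on $\sum_k \xi_{k,p}$.

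Concretely, for (1) I would take $\W = t\W_p$ with $t>1$ close to $1$ as a competitor in the $p$-minimization, obtaining
\[
F_p(\W_p) \leq F_p(t\W_p) + (t^n-1)|\W_p| + E(t\W_p) - E(\W_p).
\]
Since $\l_k(t\W_p) = t^{-2}\l_k(\W_p)$ and $F_p$ is nondecreasing, $F_p(t\W_p) \leq F_p(\l_1(\W_p)-s,\ldots,\l_N(\W_p)-s)$ for $s=(1-t^{-2})\l_1(\W_p)$; combined with the uniform bounds $|\W_p|\leq C$, $\l_1(\W_p)\geq c$ from Lemma \ref{lem:ex} and Faber--Krahn, and with the crucial error estimate $E(t\W_p)-E(\W_p) = O(|t-1|)$ (see the obstacle below), this is precisely \eqref{eq:lip} for $(F_p,\W_p)$ with a uniform constant. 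Letting $t\searrow 1$ (so $s\searrow 0$) and using that $F_p$ is $C^1$ yields $\sum_k \xi_{k,p} \leq c$. Part (2) is entirely parallel with $t<1$: the same dilation gives \eqref{eq:strictinc} for $(F_p,\W_p)$ uniformly, and the limit produces $\sum_k \xi_{k,p} \geq c>0$, from which at least one $\xi_{k,p}$ is nonzero.

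For (3), each $u_{k,p}$ is $L^2(\W_p)$-normalized and solves $-\triangle u_{k,p} = \l_k(\W_p) u_{k,p}$ with $u_{k,p}\in H^1_0(\W_p)$, while $\l_k(\W_p) \leq \l_N(\W_p) \leq C$ and $|\W_p|\leq C$ are uniform in $p$ by Lemma \ref{lem:ex}. Standard ultracontractivity of the Dirichlet heat semigroup (equivalently, Moser iteration applied to the eigenfunction equation via the Sobolev embedding $H^1_0(\W_p)\ss H^1(\R^n)\hookrightarrow L^{2^*}(\R^n)$) then gives $\|u_{k,p}\|_{\infty} \leq C$, uniform in $p$. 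The main obstacle is the uniform estimate $E(t\W_p)-E(\W_p) = O(|t-1|)$: the $\chi$-piece is immediate from $|\chi'|\leq\tfrac12$, while for the integrals one uses that $|t\W_p\triangle\W_p| = O(|t-1|)$ on bounded sets and that the integrands in $E$ are uniformly bounded on $t\W_p\cup\W_p$; the required uniform bound on $\diam\W_p$ is forced by $E(\W_p)\to 0$ from Lemma \ref{lem:ex}(4), since the first integrand of $E$ grows with $d(\cdot,\Wb)$ and so any piece of $\W_p$ at a definite distance from $\Wb$ carries a fixed positive cost.
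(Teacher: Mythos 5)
Your plan for (1)--(2) follows the same route as the paper: take the dilation $t\W_p$ as a competitor in the $p$-minimization problem, arrive at the bounds \eqref{eq:lip}/\eqref{eq:strictinc} for $(F_p,\W_p)$ with uniform constants, and send $t\to1$ (equivalently $s\to0$), using that $F_p$ is $C^1$ from Proposition \ref{prop:fp1}(1) to read off $\sum_k\xi_{k,p}=\sum_k\p_{\k_k}F_p(\W_p)$. The paper just phrases this as "verify \eqref{eq:strictinch1}--\eqref{eq:strictinch2} and apply Lemma \ref{lem:strictinc}(2--3)"; your version is the same argument unwound, and you are right that the one nontrivial point it glosses over is the control of $E(t\W_p)-E(\W_p)$. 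Part (3) is also the paper's approach (cite a universal $L^\infty$ bound on eigenfunctions in terms of $\l_k$, then use Lemma \ref{lem:ex}(2)); quoting ultracontractivity/Moser is a legitimate substitute for the reference to \cite{BMPV}.

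However, the step where you derive a uniform diameter bound on $\W_p$ from $E(\W_p)\to 0$ is not correct and would also be circular if it were. The convergence $E(\W_p)\to 0$ only forces $|\W_p\sm B_R|\to 0$ for any fixed $R$ compactly containing $\Wb$; a thin tongue of $\W_p$ reaching far from $\Wb$ contributes an arbitrarily small amount to $E$, so no uniform bound on $\diam\W_p$ follows. (The genuine uniform diameter bounds appear only later, in Corollary \ref{cor:uniformbd} and Corollary \ref{cor:ld}; the latter in particular relies on Lemma \ref{lem:moreest}, so one cannot use it here.) The good news is that no diameter bound is needed. The two integrands in $E$ should be read as $\min\{d(\cdot,\Wb),1\}$ and $\min\{d(\cdot,\Wb^c),1\}$ --- with a literal $\max$ the second integral $\int_{\W^c}$ would diverge and $E(\Wb)$ would not vanish. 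With that reading, each integrand is bounded by $1$ and is constant outside the $1$-neighborhood of $\p\Wb$, a fixed bounded set. A change of variables $x=ty$ in $\int_{t\W_p}\min\{d(x,\Wb),1\}$ then shows that the integrand difference $\min\{d(ty,\Wb),1\}-\min\{d(y,\Wb),1\}$ vanishes outside a bounded region depending only on $\Wb$ and is of size $O(|1-t|)$ there, while the Jacobian factor contributes $(t^n-1)\cdot O(1)$ using the uniform volume bound from Lemma \ref{lem:ex}(3); together with the $\chi$-term estimate from $|\chi'|\le\tfrac12$ this gives $|E(t\W_p)-E(\W_p)|=O(|1-t|)$ uniformly in $p$, with no diameter hypothesis. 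That is the estimate your argument actually needs; please replace the diameter claim with it.
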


\begin{proof}
	For properties (1) and (2), it is clear from the definition of $p$-minimizer that $\W_p$ and $F_p$ satisfy \eqref{eq:strictinch1} and \eqref{eq:strictinch2}, and so Lemma \ref{lem:strictinc}, (2-3), applies to $\W_p$ and $F_p$. The constants there depend only on quantities which are independent of $p$, using Lemma \ref{lem:ex}(2-3). The consequences follow by sending $s$ to $0$.
	
	On the other hand, (3) is a universal property of the eigenfunctions of a domain, with $C$ depending only on $\l_k(\W_p)$ (see \cite{BMPV}[Remark 2.5], for example). Using Lemma \ref{lem:ex} part (2), this is uniform in $p$.
\end{proof}

Noting that the numbers $\xi_{k,p}$ are bounded uniformly in $p$, we pass to a subsequence of $p$ (without changing notation) such that
\[
\lim_{p\rightarrow \8}\xi_{k,p} = \xi_k
\] 
for some $\xi_k$, for every $k\in [1,N]$.

\section{The Estimate of Dorin Bucur}\label{sec:db}

The two uniform estimates on $\W_p$ which would be most helpful to us are the lower bound and Lipschitz estimate of \ref{prop:fp2} part (2), but uniform in $p$. While we will take up the Lipschitz estimate in the following section, we do not know how to obtain the uniform lower bound (this is one of the central difficulties of these kinds of spectral optimization problems). Instead, we will use the groundbreaking approach of Dorin Bucur from \cite{Bucur}, who observes that a lower bound is easier to obtain for the torsion function on a domain, and this can still be of some use. We briefly summarize the method, and explain how it applies to $p$-minimizers.

The \emph{torsion energy} of an (quasi)open set $A$ is given by
\[
T(A) = \inf_{v\in H^1_0(A)} \int_A \frac{1}{2}|\n v|^2 - v.
\]
The infimum is attained by a function $v_A$ which satisfies the torsion equation on $A$: $-\triangle v_A = 1$ on $A$; we may extend $v_A$ by $0$ to $\R^n$. Let
\[
d_\g (A,B) = \int |u_A - u_B|
\]
be the distance associated with $\g$-convergence.

A \emph{torsion shape subsolution} with constants $\L,\d$ is a quasiopen set $A$ which has the property that given any open $B\ss A$ with $d_\g(A,B)<\d$, we have
\[
T(A) \leq T(B) + \L (|B|-|A|).
\]
Bucur established the following Alt-Caffarelli-type lower bound for torsion shape subsolutions, which also implies a diameter bound:

\begin{proposition}\label{prop:Bucur}
	Let $A$ be a torsion shape subsolution with constants $\d,\L$. Then there are $r_0$, $C_0$ depending on $\L,\t$, and $n$, such that if for $r\leq r_0$
	\[
	\fint_{B_{r}(x)} v_A \leq C_0 r,
	\] 
	then $v_A$ is zero on $B_{r/4}(x)$. Moreover, we have that each connected component of $A$ has diameter bounded by $C_0$.
\end{proposition}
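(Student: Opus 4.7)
The plan is to reduce to the Alt-Caffarelli-type argument of Bucur in \cite{Bucur}: the proposition is essentially a specialization of his main estimate on torsion shape subsolutions, so the task is to carry out his three-step scheme and verify our setup satisfies the hypotheses. First I would promote the averaged smallness $\fint_{B_r(x)} v_A \leq C_0 r$ to a pointwise bound. Since $v_A \geq 0$ and $-\D v_A \leq 1$ distributionally on $\R^n$, Fubini produces a radius $s \in (r/2, r)$ with $\fint_{\p B_s(x)} v_A \leq C \fint_{B_r(x)} v_A \leq C C_0 r$; comparing $v_A$ with the harmonic extension $h$ of its trace on $\p B_s(x)$ plus the Poisson corrector $\phi(y) = (s^2 - |y-x|^2)/(2n)$, the maximum principle on $A \cap B_s(x)$ gives $v_A \leq h + \phi$, and the Poisson kernel on $B_{s/2}$ then yields $v_A \leq C(C_0 r + r^2) \leq C_1 r$ on $B_{r/4}(x)$ for $r \leq r_0$.

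Second, I would test the torsion subsolution inequality against the competitor $B = A \sm \overline{B_{r/4}(x)}$. With $\eta$ a smooth cutoff vanishing on $B_{r/4}(x)$ and equal to $1$ outside $B_{r/2}(x)$, the function $w = \eta v_A$ lies in $H^1_0(B)$, so a direct Caccioppoli computation---expanding $\int_A [\frac{1}{2}|\n w|^2 - w - \frac{1}{2}|\n v_A|^2 + v_A]$ and integrating by parts using $-\D v_A = 1$---gives
\[
T(B) - T(A) \leq C C_1^2 r^n.
\]
The subsolution property supplies the matching lower bound $T(B) - T(A) \geq \L |A \cap B_{r/4}(x)|$, and combining the two yields a density estimate. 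The $\g$-convergence hypothesis $d_\g(A,B) < \d$ is met for $r\leq r_0$ small, since $|A \sm B|\to 0$ and torsion functions vary continuously in Lebesgue measure on families of uniformly bounded torsion.

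The hard step is passing from the density bound to the vanishing statement $v_A \equiv 0$ on $B_{r/4}(x)$: smallness of $|A \cap B_{r/4}(x)|/|B_{r/4}|$ does not on its own force $v_A$ to vanish. Bucur's resolution is an Alt-Caffarelli-style dyadic iteration---one shows that when $C_0$ lies below a universal threshold depending only on $\L$ and $n$, the hypothesis $\fint_{B_r} v_A \leq C_0 r$ strictly improves upon passing to $B_{r/2}(x)$, and iterating drives $v_A$ to $0$ pointwise on $B_{r/4}(x)$. Balancing the $r^2$ Poisson error against $C_0 r$, and the $C_1^2 r^n$ bound against $\L |A \cap B_{r/4}|$, is the delicate point and determines the admissible $C_0, r_0$ in terms of $\L, n$; this is the technical heart of the argument and where I would expect most of the work.

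Finally, the diameter bound follows from a global volume control together with the vanishing statement just proved. Taking $B = \emptyset$ in the subsolution inequality forces $|A| \leq T(A)/\L \leq C$, so $A$ has uniformly bounded total measure. If a connected component $K$ of $A$ had diameter exceeding a sufficiently large $C_0(\L,n)$, this volume bound combined with the interior torsion estimate $v_A \leq C\dist(\cdot,\p A)^2$ would produce a ball $B_r(z) \subset K$ (away from the bulk of $K$) on which $\fint_{B_r(z)} v_A \leq C_0 r$, triggering $v_A \equiv 0$ on $B_{r/4}(z)$ and contradicting $z \in K$.
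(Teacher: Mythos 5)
The paper does not prove this proposition; it treats it as a citation to Bucur's theorem in \cite{Bucur}, adding only one remark: since $-\triangle v_A \leq 1$ distributionally on $\R^n$, the sub-mean-value inequality converts the $L^1$ hypothesis $\fint_{B_r(x)} v_A \leq C_0 r$ into the pointwise bound $\sup_{B_{r/2}(x)} v_A \leq C(C_0 + r_0)\, r$ that appears in Bucur's statement. Your proposal instead attempts a self-contained reconstruction of Bucur's argument. Your step 1 is exactly the conversion the paper mentions, and your step 2 (cutoff competitor plus Caccioppoli) is indeed the opening move of Bucur's proof. But there are two genuine gaps.

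First, you correctly observe that the density estimate $|A \cap B_{r/4}(x)| \leq C C_1^2 r^n / \L$ does not by itself force $v_A \equiv 0$ on $B_{r/4}(x)$, and you correctly flag the passage from the density bound to vanishing as the technical heart of the argument. But you then leave precisely that step as a black box (``Alt--Caffarelli-style dyadic iteration''), without the mechanism: how the density estimate is fed back to re-establish the $\fint v_A \leq C_0 r$ hypothesis at a smaller scale with a controlled constant, and why the iteration terminates at $|A \cap B_{r/4}(x)| = 0$ rather than only producing polynomial decay of $v_A$ at $x$. Since this is the only non-routine step, omitting it means the proposal does not actually prove the statement.

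Second, the argument for the diameter bound is wrong. Inserting $B = \emptyset$ into the subsolution inequality gives $T(A) \leq -\L |A|$, i.e.\ $\L |A| \leq -T(A) = \tfrac{1}{2}\int v_A$; this is a lower bound on $\int v_A$, not an upper bound on $|A|$. You wrote $|A| \leq T(A)/\L$, which has the sign reversed (recall $T(A) < 0$), and there is no a priori bound on $\int v_A$ for a torsion shape subsolution. Moreover, $B = \emptyset$ is admissible only when $d_\gamma(A,\emptyset) = \int v_A < \d$, which cannot be assumed. The diameter bound in \cite{Bucur} is obtained by a chaining argument built on the nondegeneracy along $\p A$ that the vanishing lemma supplies, not via a global volume bound, so this step needs to be replaced rather than repaired.
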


The $L^1$ version follows from the $L^\8$ version by noting that $v_A$ solves $-\triangle v_A \leq 1$ in the sense of distributions on $\R^n$, and using the mean value property.

This is complemented by the following observation:

\begin{lemma}\label{lem:torsion} Assume that $F$ and $\Wb$ satisfy (A1-3) and (B1-5). Let $F_p$ be as in \eqref{eq:fp}, $p>p_0$, $\W_p$ as in Lemma \ref{lem:ex}.  The sets $\W_p$ (as well as $\Wb$) are torsion shape subsolutions, with constants $\L,\d$ depending only on $F,n$, and $N$, but not $p$. 
\end{lemma}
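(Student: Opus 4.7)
The plan is to reduce the torsion subsolution property to a minimality-driven estimate in $\g$-distance. The starting point is the identity, valid for any $B\ss A$,
\[
T(B) - T(A) = \tfrac{1}{2} \int (v_A - v_B) = \tfrac{1}{2} d_\g(A, B),
\]
which follows from $T(A) = -\tfrac{1}{2} \int v_A$ and from $v_B \le v_A$ (maximum principle). Thus the torsion subsolution inequality $T(\W_p) \le T(B) + \L(|B| - |\W_p|)$ is equivalent to $|\W_p| - |B| \le C\, d_\g(\W_p, B)$, and it suffices to establish this with $C$ and a threshold $\d$ both uniform in $p$; the same reduction (with no subscript $p$) applies to $\Wb$.

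Applying the minimality of $\W_p$ to the competitor $B$ gives
\[
|\W_p| - |B| \le \bigl(F_p(B) - F_p(\W_p)\bigr) + \bigl(E(B) - E(\W_p)\bigr).
\]
The $E$-term is routine: one integral decreases, the other (over the bounded set $\W^c \cap \Wb$) increases by at most $s\,|\W_p \sm B|$, and $|\chi'|\le 1/2$, so one obtains $E(B) - E(\W_p) \le C_1(|\W_p| - |B|)$ with $C_1 < 1$ uniformly in $p$ provided the fixed $s$ is taken small enough. This term can then be absorbed onto the left.

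For the $F_p$-difference, the uniform-in-$p$ Lipschitz estimate on $F_p$ (Proposition \ref{prop:fp1}(1)) combined with the uniform eigenvalue bounds $c \le \l_1(\W_p) \le \l_N(\W_p) \le C$ coming from Lemma \ref{lem:ex} and Faber--Krahn, together with $\l_k(B)$ staying in a uniform compact range for $\d$ small (via $\g$-continuity of the spectrum), produces
\[
F_p(B) - F_p(\W_p) \le L \sum_{k=1}^N (\l_k(B) - \l_k(\W_p))
\]
with $L$ uniform in $p$. The crux of the argument is then to prove the Lipschitz bound
\[
\l_k(B) - \l_k(\W_p) \le C' \, d_\g(\W_p, B),
\]
again with $C'$ independent of $p$. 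Following Bucur's approach, I would multiply the eigenfunction $u_{k,\W_p}$ by the cutoff $\phi_\eta = \min(v_B/\eta, 1) \in H^1_0(B)$, test the Rayleigh quotient on $B$ with $u_{k,\W_p}\,\phi_\eta$, integrate by parts using $-\triangle v_{\W_p} = 1_{\W_p}$, estimate the resulting error terms by $\|v_{\W_p} - v_B\|_{L^1} = d_\g(\W_p, B)$, and finally send $\eta \searrow 0$. The uniform $L^\infty$-bound on $u_{k,\W_p}$ (Lemma \ref{lem:moreest}(3)) and on $\|v_{\W_p}\|_\infty$ (from $|\W_p| \le C$ and elliptic regularity for the torsion equation) keep $C'$ uniform.

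Combining the steps yields $(1 - C_1)(|\W_p| - |B|) \le 2LNC'\,(T(B) - T(\W_p))$, which is the torsion shape subsolution property with $\L, \d$ independent of $p$. The argument for $\Wb$ itself is parallel, with $E \equiv 0$ and using (B1) together with (A3). The main obstacle is the test function estimate in the Lipschitz bound on $\l_k$: carrying out the $\eta \searrow 0$ limit cleanly while verifying that $C'$ depends only on already-controlled quantities (the eigenvalue bounds and the $L^\infty$-norms of $u_{k,\W_p}$ and $v_{\W_p}$) and not on $p$ itself.
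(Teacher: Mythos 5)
Your proposal is correct and follows essentially the same route as the paper: reduce to the eigenvalue perturbation estimate, invoke the uniform Lipschitz bound on $F_p$ from (A3) via Proposition~\ref{prop:fp1}(1), use minimality, and absorb the $E$-term (which the paper does implicitly, accounting for the factor $\tfrac14$). The step you flag as ``the main obstacle'' --- establishing $\l_k(B) - \l_k(\W_p) \le C'\, d_\g(\W_p, B)$ with $C'$ uniform in $p$ --- is exactly what the paper handles by citing Bucur's Theorem~3.1 and observing that the constants $c_k(A)$ there depend only on $N$ and $\l_N(A)$, which are controlled uniformly via Lemma~\ref{lem:ex}(2) and Lemma~\ref{lem:strictinc}(1).
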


\begin{proof}
	We follow the proof in Bucur's \cite{Bucur}[Theorem 3.1], noting only that the constants $c_k(A)$ there depend only on $N$ and $\l_N(A)$ (as can be readily checked in the proof in the appendix), and we have that $\l_N(\W_p),\l_N(\W)\leq C$ from Lemmas \ref{lem:ex}(2) and \ref{lem:strictinc}(1). This gives
	\[
	\max_{k\leq N} \l_k(A) - \l_k(\W_p) \leq C (T(A) -T(\W_p)) \leq C d_\g(A,\W_p).
	\]
	for any $A\ss \W_p$ with $d_\g(A,\W_p)\leq \d$ with $\d$ not depending on $p$. On the other hand, after possibly taking $\d$ smaller, we may use (A3) from Proposition \ref{prop:fp1}(1) to write
	\[
	F_p(A) - F_p(\W_p) \leq C \max_{k\leq N} \l_k(A) - \l_k(\W_p).
	\]
	Combining, this gives
	\[
	F_p(A) - F_p(\W_p)\leq C (T(A) -T(\W_p)),
	\]
	so that from the minimality of $\W_p$ we get
	\[
	\frac{1}{4} |\W_p\sm A| \leq F_p(A) - F_p(\W_p)\leq C (T(A) -T(\W_p)).
	\]
	This implies that $\W_p$ is a torsion shape subsolution with constants uniform in $p$. The same argument works for $\Wb$ in place of $\W_p$.
\end{proof}

One application of this is a uniform bound on the diameter of $\W_p$. This is not the only way of obtaining this kind of result, and a different argument not requiring (A3) will be given in the next section.

\begin{corollary}\label{cor:uniformbd} Let $F,F_p,\Wb,\W_p$ be as in Lemma \ref{lem:ex}, and also that $F$ satisfies (A3). Then $\W_p$ has diameter bounded uniformly in $p$.
\end{corollary}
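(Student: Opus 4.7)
The plan is to combine the Bucur diameter estimate on individual components with the penalty $E$ to rule out components far from $\Wb$. By Lemma~\ref{lem:torsion}, $\W_p$ is a torsion shape subsolution with constants uniform in $p$, so Proposition~\ref{prop:Bucur} immediately gives that each connected component of $\W_p$ has diameter at most some $C_0$ independent of $p$. What remains is to bound how far any such component can lie from $\Wb$, since then bounded diameter of $\Wb$ (by (B4)) plus a bounded number of bounded components will yield the result.

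First I would record that $E(\W_p)$ is uniformly bounded in $p$: indeed, by Lemma~\ref{lem:ex}(4) it even tends to $0$, so in particular $\int_{\W_p} s \max\{\dist(x,\Wb),1\}\, dx \le E_0$ uniformly. Let $C$ be any connected component of $\W_p$ at distance $R\geq 1$ from $\Wb$. I would split into two cases depending on whether $C$ carries any of the first $N$ eigenfunctions of $\W_p$. In Case A, if $C$ supports at least one eigenfunction corresponding to some $\l_k(\W_p)$ with $k\leq N$, then $\l_1(C) \le \l_N(\W_p) \leq K$ by Lemma~\ref{lem:ex}(2), and the Faber--Krahn inequality gives $|C| \ge c > 0$ uniformly. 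Thus
\[
E_0 \ge \int_C s \max\{\dist(x,\Wb),1\}\, dx \ge sR|C| \ge scR,
\]
which forces $R \le E_0/(sc) =: R_A$, a bound independent of $p$.

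In Case B, $C$ carries none of the first $N$ eigenfunctions, so deleting it gives $\W_p' = \W_p \sm C$ with $\l_k(\W_p')=\l_k(\W_p)$ for all $k\leq N$, and hence $F_p(\W_p')=F_p(\W_p)$. Since $R\geq 1>0$ forces $C\ss \Wb^c$, we have $\dist(\cdot,\Wb^c)=0$ on $C$, and a direct computation (using $|\chi'|\leq 1/2$) gives
\[
E(\W_p')-E(\W_p) \le -sR|C| + s|C| + \tfrac{1}{2}|C|.
\]
The minimality of $\W_p$ yields $|C| \le E(\W_p')-E(\W_p)$, i.e.\ $sR\leq s-\tfrac{1}{2}$, which for $s<1/2$ (the ``sufficiently small'' $s$ in the definition of $p$-minimizer) is incompatible with $R\geq 1$. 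So no Case B component exists at distance $\geq 1$ from $\Wb$.

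Combining the two cases, every connected component of $\W_p$ lies within distance $\max(R_A,1)$ of $\Wb$ and has diameter at most $C_0$. Together with $\diam(\Wb) \le D_0$ from (B4), this bounds $\diam(\W_p)$ uniformly in $p$. The only substantive ingredient is Bucur's Proposition~\ref{prop:Bucur} giving the componentwise diameter bound; the location bound is an easy consequence of the structure of $E$, so I do not expect a serious obstacle here beyond carefully tracking the uniform constants.
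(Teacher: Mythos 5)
Your proof is correct and uses the same core ingredients as the paper's: the componentwise diameter bound from Bucur's estimate (Lemma \ref{lem:torsion} plus Proposition \ref{prop:Bucur}), a delete-a-component comparison, the Faber--Krahn inequality to get a uniform volume lower bound for components carrying a low eigenvalue, and the penalty $E$ to locate components near $\Wb$. The only difference is organizational: the paper first shows \emph{every} component satisfies $\l_1(U)\le\l_N(\W_p)$ (so your Case~B is in fact vacuous), then uses $E(\W_p)\to 0$ to force $|\W_p\sm B_R|\to 0$ and hence each component to meet $B_R$, whereas you split into cases and use the uniform bound $E(\W_p)\le E_0$ to control the distance of each Case~A component directly.
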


\begin{proof}
	This follows immediately from Lemma \ref{lem:torsion} and that $E(\W_p)\rightarrow 0$ from Lemma \ref{lem:ex}(4). Take $B_R$ to be a ball compactly containing $\Wb$; then as $E(\W_p)\rightarrow$, we have that $|\W_p \sm B_R|\rightarrow 0$. Consider a connected component $U$ of $\W_p$, then. From the Faber-Kahn inequality, we have that $\l_1(U) \geq C |U|^{-2/n}$. On the other hand, $\l_1(U)\leq \l_N(\W_p)$, for otherwise removing $U$ from $\W_p$ will not change the first $N$ eigenvalues but decrease the volume, contradicting the fact that $\W_p$ is a $p$-minimizer. Using Lemma \ref{lem:ex}(2), this gives $|U|\geq c$ uniformly in $p$, and so for $p$ large $U$ must intersect $B_R$. Using Lemma \ref{lem:torsion}, we learn that $U\ss B_{R+C_0}$.
\end{proof}

The more important corollary for us is the following technical result. The topological boundary $\p \Wb$ is difficult to deal with from a geometric measure theory perspective, but the shape subsolution property does yield some information about it.

\begin{corollary}\label{cor:smallbdry} We have $|\p \W \sm \W|=0$ for any torsion shape subsolution $\W$.	
\end{corollary}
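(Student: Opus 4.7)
The plan is to adapt the classical Alt--Caffarelli density argument, working with the open set $U := \{v_\W > 0\}$. By the Lipschitz continuity of $v_\W$ for torsion shape subsolutions, which is a standard input from \cite{Bucur}, the set $U$ is indeed open. Moreover, using the maximality property of the finely open representative of $\W$ described in Section \ref{sec:not}, one has $U \ss \W$ as sets (if $B_r(y) \ss U$, then $B_r(y) \sm \W \ss U \sm \W$ has capacity zero, forcing $B_r(y) \ss \W$). Conversely, $\W \sm U$ has capacity zero, since $v_\W > 0$ quasi-everywhere on $\W$.

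The first key step is to prove $|\p U|=0$. Fix $x \in \p U$: then $v_\W(x)=0$, and $v_\W$ is not identically $0$ in any neighborhood of $x$, so the contrapositive of Proposition \ref{prop:Bucur} yields $\fint_{B_{r/2}(x)} v_\W > C_0 r/2$ for all small $r$. Thus there is $y \in B_{r/2}(x)$ with $v_\W(y) > C_0 r/2$, and the Lipschitz bound $|v_\W(z)-v_\W(y)| \le L|z-y|$ forces $B_{cr}(y) \ss \{v_\W>0\}=U$ for a constant $c=c(C_0,L)$; shrinking $c$ further so that $B_{cr}(y) \ss B_r(x)$ produces the density bound
\[
\frac{|B_r(x) \cap U|}{|B_r|} \geq c_1 > 0 \qquad \text{for all } x \in \p U \text{ and } r \le r_0.
\]
If $|\p U|>0$, the Lebesgue density theorem gives a point $x\in\p U$ of Lebesgue density $1$ in $\p U$; since $\p U \ss U^c$, this forces $|B_r(x) \cap U|/|B_r| \to 0$, contradicting the density bound. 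Hence $|\p U|=0$.

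The second step is the inclusion $\p\W\sm\W \ss \p U$. Take $x \in \p\W\sm\W$: since $U \ss \W$, one has $x \notin U$, so it suffices to check that every ball about $x$ meets $U$. If instead $B_r(x) \cap U = \emptyset$ for some $r$, then since $x\in\p\W$ there exists $y \in B_r(x)\cap\W$, which necessarily lies in $\W\sm U$. Pick $\r>0$ small enough that $B_\r(y)\ss B_r(x)$: then $B_\r(y)\cap\W \ss \W\sm U$ has capacity zero, so $B_\r(y) \cap \W^c$ is $B_\r(y)$ minus a set of capacity zero, and in particular $\W^c$ is not thin at $y$. This contradicts $\W$ being finely open at $y$. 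So $x \in \bar U \sm U = \p U$, giving $|\p\W\sm\W| \le |\p U| = 0$.

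The step I expect to be the main obstacle is the final fine-topology identification $\p\W\sm\W \ss \p U$, since it relies on somewhat delicate facts about the specific finely open representative of $\W$ and on the invariance of thinness under capacity-zero modifications. The interior density argument itself is routine once one has the non-degeneracy from Proposition \ref{prop:Bucur} together with the Lipschitz continuity from \cite{Bucur}.
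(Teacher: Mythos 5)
Your approach is valid but genuinely different from the paper's, and it relies on an extra input that the paper's proof avoids. You build the argument around the open set $U=\{v_\W>0\}$ and establish a uniform interior density bound $|B_r(x)\cap U|\geq c_1|B_r|$ for $x\in\p U$ by combining Proposition \ref{prop:Bucur} (in contrapositive form) with the \emph{Lipschitz continuity} of $v_\W$. That Lipschitz bound is indeed part of Bucur's work on shape subsolutions, but it is not stated in this paper; Proposition \ref{prop:Bucur} records only the nondegeneracy and the diameter bound. The paper's proof is deliberately arranged to avoid invoking Lipschitz continuity: it works directly with $\phi(x,r)=\max_{B_r(x)}v_\W$, derives from the mean value inequality the self-improving relation $\phi(x,r)\leq C_1\frac{|B_{2r}(x)\cap\W|}{r^n}\phi(x,2r)$, and iterates it at a point of Lebesgue density $0$ for $\W$ to force $\phi(x,r)\lesssim r^2$, whence Proposition \ref{prop:Bucur} kills $v_\W$ on a small ball around $x$. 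So the two arguments have the same overall shape (use the nondegeneracy to show density-zero/boundary points of $U$ are few) but swap which auxiliary estimate drives the decay: yours is the classical Alt--Caffarelli-style argument via gradient bounds, theirs is a Moser-type iteration using only the subsolution property of $v_\W$.

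Two more points. First, your claim ``$\W\sm U$ has capacity zero, since $v_\W>0$ quasi-everywhere on $\W$'' is a subtler statement than it may look: under the paper's specific choice of finely open representative (which adds in all balls whose complement in $\W$ is polar, rather than taking $\W=\{v_\W>0\}$), it is not immediate that $v_\W>0$ q.e.\ on $\W$; a fine component of positive capacity but zero measure would falsify it. You can sidestep this entirely by localizing the argument you already have in mind: if $v_\W\equiv0$ on $B_\r(y)$ and $y\in\W$, then for any nonnegative $u\in H^1_0(\W\cap B_{\r/2}(y))$ one has $\int u=\int\n v_\W\cdot\n u=0$, hence $H^1_0(\W\cap B_{\r/2}(y))=\{0\}$, hence $\text{cap}(\W\cap B_{\r/2}(y))=0$, which contradicts $\W\cap B_{\r/2}(y)$ being a nonempty finely open set. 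Second, in your second step you actually only need that a nonempty finely open set has positive capacity, together with this local vanishing argument; the global inclusion $\text{cap}(\W\sm U)=0$ is not needed. With those two repairs your proof is sound, and it is a perfectly reasonable alternative provided one is willing to import the Lipschitz estimate from \cite{Bucur}.
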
 

\begin{proof}
	Take any $x\in \R^n$: then using the mean value property,
	\[
	v_\W (x)\leq \fint_{B_r(x)} v_\W \leq C \frac{|B_r(x)\cap \W|}{r^n} \max_{B_r(x)} v_\W.
	\]
	Set $\phi(x,r)=\max_{B_r(x)}v_\W$; we have the relation
	\[
	\phi(x,r) \leq C_1 \frac{|B_{2r}(x)\cap \W|}{r^n} \phi(x,2r).
	\]
	Note also that $\phi(x,r)\leq C$. Now, take any point $x$ which has Lebesgue density $0$ for $\W$:
	\[
	\lim_{r\searrow 0} \frac{|\W\cap B_r|}{|B_r|} = 0.
	\]
	For all $r<r_1=r_1(x,\e)$, we then have $C_1|\W\cap B_{2r}|\leq \e r^n$, so
	\[
	\phi(x,r)\leq \e \phi(x,2r).
	\]
	It follows that
	\[
	\phi(x,r)\leq C (\frac{r}{r_1})^{-C\log {\e}} \leq C r^{2}
	\]
	for sufficiently small $\e$. Applying Proposition \ref{prop:Bucur}, it follows that $v_\W$ is zero on a small ball around $x$; this implies that the set of such $x$ has zero capacity. To summarize, we have shown that $\p \W$ consists of points of positive Lebesgue density for $\W$, up to a polar set.
	
	If $|\p \W \sm \W|>0$, then almost every point in this set has Lebesgue density $1$ for $\p \W \sm \W$, and so must have Lebesgue density $0$ for $\W$. We have just shown that the set of such points is polar, so it must be that $|\p \W \sm \W|=0$. 
\end{proof}

\begin{remark} Lemma \ref{lem:torsion} is the only place in our argument where we use assumption (A3), and this is only needed for Corollary \ref{cor:smallbdry}. Hence (A3) may be replaced by $|\p \Wb \sm \Wb|=0$.
\end{remark}

\section{The Lipschitz Estimate}\label{sec:lip}

We now consider a uniform version of the Lipschitz estimate on $u_{k,p}$ in Proposition \ref{prop:fp2}(2). It is unreasonable to attempt to prove a uniform Lipschitz estimate on the first $N$ eigenfunctions of $\W_p$, at least if $F$ is not strictly increasing with respect to the first eigenvalue: for at least some of the eigenvalues on which $F$ does not actually depend, we expect the corresponding eigenfunctions to not remain Lipschitz as $p\rightarrow \8$. However, we may still obtain a Lipschitz estimate on those eigenfunctions which appear in the Euler-Lagrange equation for the limiting problem. This is the aim of the estimate below.

\begin{lemma}\label{lem:lip} Let $\Wb,\W_p,F,F_p$ be as in Proposition \ref{prop:fp2}. Then there is a constant $C$ independent of $p$ such that
	\[
	\max_{x\in \W_p, k\in [1,N]} \xi_{k,p} |\n u_{k,p}(x)|^2 \leq C. 
	\]	
\end{lemma}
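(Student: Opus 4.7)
My plan is to normalize by introducing $w_{k,p} := \xi_{k,p}^{1/2} u_{k,p}$, so that the desired bound becomes a uniform-in-$p$ Lipschitz estimate $|\n w_{k,p}|\leq C$. In these variables the Euler-Lagrange condition of Prop.~\ref{prop:fp2}(7) rewrites cleanly as
\[
\sum_{k=1}^N (w_{k,p})_\nu^2 = \xi_{0,p} \qquad\text{on } \p^*\W_p,
\]
with $\xi_{0,p}$ taking values in $[1/2,3/2]$ (provided $s$ was chosen small) and uniformly Lipschitz in $x$. Lemma~\ref{lem:moreest} already supplies $|w_{k,p}|\leq C$ and $|\triangle w_{k,p}| = \xi_{k,p}^{1/2}\l_{k,p}|u_{k,p}|\leq C$ uniformly in $p$.

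I would then argue by contradiction and blowup. Suppose $M_p := \sup_{x,k}|\n w_{k,p}(x)| \to \8$ along a subsequence. Pick $x_p\in \W_p$ and (after a further subsequence) a fixed index $k_*$ with $|\n w_{k_*,p}(x_p)| \geq M_p/2$. Interior elliptic estimates applied to $w_{k_*,p}$ give $M_p \leq C/d_p + Cd_p$ with $d_p := \dist(x_p,\p\W_p)$; since $\W_p$ has uniformly bounded diameter, this forces $M_p d_p \leq C$ and $d_p \to 0$. Blow up at scale $M_p^{-1}$ by
\[
\tilde w_{k,p}(y) := w_{k,p}(x_p + y/M_p), \qquad \tilde\W_p := M_p(\W_p - x_p).
\]
The $\tilde w_{k,p}$ are globally $1$-Lipschitz and uniformly bounded, with $|\n\tilde w_{k_*,p}(0)|\geq 1/2$ and $|\triangle\tilde w_{k,p}|\leq C/M_p^2\to 0$, while the rescaled free boundary condition becomes
\[
\sum_k (\tilde w_{k,p})_\nu^2 = \frac{\xi_{0,p}(x_p + \cdot/M_p)}{M_p^2}\to 0.
\]
Standard compactness yields a subsequential limit $\tilde w_{k,p}\to \tilde w_k$ in $C^{1,\a}_{\mathrm{loc}}$ and $\tilde\W_p\to \tilde\W$ in $\g$-convergence (plus Hausdorff convergence of complements), with $\tilde\W\neq \R^n$ since $M_p d_p$ stays bounded. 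The limit $\tilde w_k$ is bounded and $1$-Lipschitz on $\R^n$, harmonic on $\tilde\W$, vanishes outside $\tilde\W$, and satisfies $|\n\tilde w_{k_*}(0)|\geq 1/2$.

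The key structural input is the uniform-in-$p$ Weiss monotonicity of Prop.~\ref{prop:fp2}(5). This should let the viscosity formulation (V2) of Prop.~\ref{prop:fp2}(4) pass to the limit with the now degenerate parameter $\xi_0\equiv 0$, forcing the asymptotic tangent gradient of each $\tilde w_k$ to vanish at every boundary point of $\tilde \W$ touched from inside by a ball. Combined with the inherited regularity theory Prop.~\ref{prop:fp2}(6), this produces a smooth piece of $\p\tilde\W$ across which $\tilde w_{k_*}$ and its normal derivative both vanish. Uniqueness in the Cauchy problem for Laplace's equation then gives $\tilde w_{k_*}\equiv 0$ on the connected component of $\tilde\W$ containing $0$, contradicting $|\n\tilde w_{k_*}(0)|\geq 1/2$.

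The hardest part will be executing the limit step cleanly: showing that the viscosity free boundary condition (with its rapidly vanishing parameter) passes to the limit, and transferring enough regularity of $\p^*\W_p$ (whose $p$-dependent estimates must be replaced by a $p$-uniform ones) to produce a smooth boundary piece of $\tilde\W$. In the nondegenerate case of \cite{KL}, a uniform nondegeneracy lower bound on the eigenfunctions sidesteps these difficulties entirely; here the vanishing of the boundary parameter in the blowup is meant to play the analogous role, with the uniform Weiss-monotonicity error from Prop.~\ref{prop:fp2}(5) carrying the argument through.
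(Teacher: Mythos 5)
Your proposal takes a genuinely different route --- contradiction plus blowup --- whereas the paper argues directly, and the paper's key device is one your plan does not use at all: the Alt--Caffarelli--Friedman monotonicity formula applied along the nodal set of each $u_{k,p}$ \emph{inside} $\W_p$. That gives $|\n u_{k,p}|\le C$ on $\{u_{k,p}=0\}\cap\W_p$ uniformly in $p$ (and even without the $\xi_{k,p}$ weight), since the ACF bound only needs $\|u_{k,p}\|_{L^2}$ and $|\triangle u_{k,p}|$, both of which are $p$-uniform. From a point $x$ with $|u_{k,p}(x)|=m$ at distance $r$ from the nodal set, the paper then runs a Harnack $+$ annulus-barrier comparison: the touching point $z$ on $\p B_r(x)\cap\{u_{k,p}=0\}$ is either interior (then the ACF bound kills it) or on $\p\W_p$ (then the viscosity condition (V2) with $\b_k = |\n v|$ gives $\xi_{k,p}|\n v|^2\le 2$). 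This needs no compactness, no limiting sets, and crucially no control of the geometry of $\p\W_p$ beyond the pointwise (V2) condition at a single touching point.

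Your blowup strategy has three concrete gaps. First, you do not rule out $M_p d_p \to 0$; if the optimal $x_p$ sits at distance $o(M_p^{-1})$ from $\p\W_p$, then after rescaling the origin lands on $\p\tilde\W$, the $C^1_{\mathrm{loc}}$ convergence of $\n\tilde w_{k_*,p}$ fails there, and the inherited bound $|\n\tilde w_{k_*}(0)|\ge 1/2$ evaporates. Second, the Cauchy-uniqueness step requires a smooth piece of $\p\tilde\W$ with both Dirichlet and Neumann data vanishing; but Proposition~\ref{prop:fp2}(6) gives $C^{1,\a}$ regularity of $\p^*\W_p$ only with $p$-dependent constants, precisely because those constants come from the Lipschitz estimate of Proposition~\ref{prop:fp2}(2), which is the very estimate being made $p$-uniform here. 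Similarly, the density and geometric-measure-theoretic controls (Corollary~\ref{cor:ld}) are derived downstream of Lemma~\ref{lem:lip}, so they are not available to tame the blowup geometry. Third, even granting a boundary point $z$ touched from inside by a ball with all limiting $\b_k=0$, concluding $\tilde w_{k_*}\equiv 0$ near $z$ via Hopf requires a sign: if the nodal set of $\tilde w_{k_*}$ reaches $z$, neither Hopf for $\tilde w_{k_*}^\pm$ nor Cauchy uniqueness (lacking smoothness) closes the argument, and the degenerate limiting condition $\xi_0\equiv 0$ gives no counterpart to (V3) to supply the missing nondegeneracy. In short, the blowup plan keeps running into exactly the ``no lower bound'' obstruction that the paper is designed around; the ACF-monotonicity$+$barrier argument is what lets the paper prove the Lipschitz bound \emph{before} any free-boundary regularity is in hand.
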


\begin{proof}
	Fix $k$, and consider a point $x\in \W_p \cap \{u_{k,p} = 0\}$. We aim to first show that $|\n u_{k,p}(x)|\leq C$ uniformly in $p$.
	
	Indeed, we let $u_+ = \max\{u_{k,p},0\}$ and $u_- = \max\{-u_{k,p},0\}$, and note that both of these functions satisfy $\triangle u_{\pm} \geq - C_0$ (using Lemma \ref{lem:moreest}). Let
	\[
	\Phi(r) = \1\frac{1}{r^2}\int_{B_r(x)} \frac{|\n u_+(y)|^2}{|x-y|^{n-2}} dy\2\1\frac{1}{r^2}\int_{B_r(x)} \frac{|\n u_-(y)|^2}{|x-y|^{n-2}} dy\2.
	\]
	As $u_{k,p}$ is smooth in $\W_p$, it is straightforward that
	\[
	\lim_{r\searrow 0^+} \Phi(r) = c(n)|\n u_{k,p}(x)|^2.
	\]
	On the other hand, we have from \cite{CJK}[Theorem 1.3, Remark 1.5] that
	\[
	\Phi(r) \leq C(n) (1 + \int_{B_1(x)}u_+^2 +\int_{B_1(x)}u_-^2)^2 \leq C.
	\]
	This gives the estimate promised.
	
	Now take any point $x\in \W_p\sm \{u_{k,p}=0\}$, and let $r=d(x,\{u_{k,p}=0\})$, $m=|u_{k,p}(x)|$.  Our goal is to show that $\sqrt{\xi_{k,p}}m \leq C r$, for then $\xi_{k,p}|\n u_{k,p}(x)|\leq C$ from standard elliptic estimates (using that $|\triangle u_{k,p}|\leq C_0$ on $\W_p$). Note that, fixing $r_0$ small, this follows for $r>r_0$ from Lemma \ref{lem:moreest}, so we may as well only consider $r<r_0$. Likewise, we may reduce our attention to the case of $m\geq r$, for otherwise the estimate is trivial. Using the Harnack inequality (and possibly reversing the sign of $u_{k,p}$) we have that
	\[
	\min_{B_{r/2}(x)}u_{k,p} \geq c m.
	\]
	Let $\Psi(r) = r^{2-n}$ if $n>2$ and $\Psi(r) = -\log r$ if $n=2$, and set
	\[
	v(y) = \frac{C_0}{2n}(|y-x|^2 - r^2) + \frac{c m + \frac{3 C_0}{8 n} r^2}{\Psi(\frac{r}{2})- \Psi(r)} \Psi(y-x) \qquad y\in B_r(x)\sm B_{r/2}(x).
	\]
	This function takes the values $0$ on $\p B_{r}(x)$ and $c m$ on $\p B_{r/2}$, and solves $\triangle v = C_0$. One may readily check from the formula that $v$ is positive on the annulus of definition, and has
	\[
	|\n v| \geq \frac{c m}{r},
	\]
	provided $r_0$ is chosen small enough.
	
	Applying the maximum principle to $q=u_{k,p}-v$ on $B_r(x)\sm B_{r/2}(x)$, we see that as $q\geq 0$ on the boundary and superharmonic, we have $q\geq 0$ on the interior. Take a point $z\in \p B_{r}$ with $u_{k,p}=0$; there are now two cases. If $z\in \W_p$, then we have
	\[
	C\geq |\n u_{k,p}(z)| \geq |\n v(z)|
	\]
	from our previous estimate along the nodal set, which implies the conclusion. On the other hand, if $z\in \p \W_p$, we use the viscosity solution property (V2) with $\b_i = |\n v|$ for $i=k$ and $\b_k=0$ otherwise, to learn that $\xi_{k,p} |\n v|^2 \leq \xi_0(z) \leq 2.$ This also implies the conclusion.
\end{proof}

\begin{corollary}\label{cor:ld} Let $\Wb,\W_p,F,F_p$ be as in Proposition \ref{prop:fp2}. We have that for each $x\in \bar{\W}_p$ and $r<r_*$,
	\begin{equation}\label{eq:ldc1}
	|B_r(x) \cap \W_p| \geq c_* r^n.
	\end{equation}
	Moreover, $\W_p \ss B_{R_*}$. Here $r_*,R_*,$ and $c_*$ do not depend on $p$.
\end{corollary}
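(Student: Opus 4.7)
The plan is to combine Bucur's Proposition~\ref{prop:Bucur} with the uniform Lipschitz continuity of the torsion function $v_{\W_p}$ (a standard output of the methods of \cite{Bucur} applied to the uniform torsion shape subsolution property from Lemma~\ref{lem:torsion}) to get the density bound, and then bootstrap to the diameter bound by combining density with $E(\W_p)\to 0$ and a minimality-based argument on components.

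For the density bound \eqref{eq:ldc1}, I would reduce to the case $x\in\p\W_p$, where $v_{\W_p}(x)=0$ by continuity. Since $x\in\bar\W_p$ and $\W_p$ is open, $\W_p\cap B_{r/4}(x)\neq\emptyset$ for every $r>0$, so $v_{\W_p}$ does not vanish identically on $B_{r/4}(x)$, and the contrapositive of Proposition~\ref{prop:Bucur} forces $\fint_{B_r(x)} v_{\W_p}>C_0 r$ for all $r<r_0$. In particular $\sup_{B_r(x)} v_{\W_p}>C_0 r$, and invoking the uniform Lipschitz constant $L$ of $v_{\W_p}$ produces a ball $B_\rho(y_0)\subset\{v_{\W_p}>C_0r/2\}\subset \W_p$ with $\rho=C_0 r/(2L)$. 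For $r$ small enough that $\rho\leq r$, this ball lies in $B_{2r}(x)$, yielding $|B_{2r}(x)\cap\W_p|\geq\omega_n\rho^n\geq cr^n$ uniformly in $p$; rescaling gives \eqref{eq:ldc1}.

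For the diameter bound, I would argue in two steps. First, each connected component $U$ of $\W_p$ has $\diam U\leq D$ uniformly: along a path in $U$ between two diameter-realizing points, place $z_0,\ldots,z_M$ with $|z_i-z_{i+1}|=2r_*$, making the balls $B_{r_*}(z_i)$ pairwise disjoint; the density bound forces $M\cdot c_*r_*^n\leq|\W_p|\leq C$ by Lemma~\ref{lem:ex}(3), so $\diam U\leq 2r_*M\leq C/(c_*r_*^{n-1})$. Second, each component lies within distance $1$ of $\Wb$: a spectrum-splitting argument shows that if $\l_1(U)>\l_N(\W_p)$, replacing $\W_p$ by $\W_p\setminus U$ preserves the first $N$ eigenvalues while decreasing the $|\W|+E(\W)$ part of the functional for $s$ small, contradicting minimality; hence $\l_1(U)\leq\l_N(\W_p)\leq C$, and Faber-Krahn gives $|U|\geq c>0$ uniformly. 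A component at distance $>1$ from $\Wb$ would then contribute at least $sc$ to $E(\W_p)$, contradicting $E(\W_p)\to 0$ (Lemma~\ref{lem:ex}(4)) for $p$ large. Combined with $\Wb\subset B_R$, this yields $\W_p\subset B_{R+1+D}$ for $p\geq p_*$, and enlarging $R_*$ to absorb the finitely many smaller $p$ (each of which has $\W_p$ bounded individually by Lemma~\ref{lem:ex}(1)) gives the full statement.

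The main technical hurdle is extracting the uniform Lipschitz constant for $v_{\W_p}$: without it, Bucur's integral lower bound combined with the $L^\infty$ bound on $v_{\W_p}$ produces only $|B_r(x)\cap\W_p|\geq cr^{n+1}$, which is the wrong scaling. The Lipschitz estimate itself is standard output of Alt-Caffarelli-type reasoning adapted to the torsion equation, as in \cite{Bucur}, and its constant depends only on the subsolution parameters $(\L,\d,n)$, which are uniform in $p$ by Lemma~\ref{lem:torsion}.
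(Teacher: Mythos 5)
There is a genuine gap in your density argument: the claimed uniform Lipschitz estimate for the torsion function $v_{\W_p}$ does not follow from the torsion shape subsolution property. Recall that a torsion shape subsolution controls only what happens when volume is \emph{removed}: $T(A)\leq T(B)+\Lambda(|B|-|A|)$ for $B\subset A$. This is precisely the structure that yields the nondegeneracy in Proposition~\ref{prop:Bucur} (you cannot delete the positivity set cheaply). A Lipschitz bound, by contrast, is an Alt--Caffarelli \emph{supersolution} estimate: it comes from comparing with \emph{larger} domains and saying that adding positivity set cannot gain too much Dirichlet energy. A subsolution gives you no such comparison, and there is no cheap substitute here: a nonnegative function with $\triangle w\geq -C$ and bounded oscillation need not be Lipschitz (consider $w(x)=M(e\cdot x)^+$ with $M$ large, which is subharmonic and bounded on compacta but has arbitrarily large gradient). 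Correspondingly, the reference \cite{Bucur} establishes only the nondegeneracy and the diameter bound for shape subsolutions, not a gradient bound. As you yourself observe, once the Lipschitz bound is unavailable, the $L^\infty$ bound on $v_{\W_p}$ gives only $|B_r\cap\W_p|\gtrsim r^{n+1}$, which has the wrong scaling. So the density estimate \eqref{eq:ldc1} does not follow from the torsion-function route.

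The paper's proof takes a different path that does have a genuine Lipschitz input: the uniform bound on $\sqrt{\xi_{k,p}}\,|\n u_{k,p}|$ from Lemma~\ref{lem:lip}, which is proved using the \emph{viscosity solution} property of the pair $(\{u_{k,p}\},\W_p)$ --- i.e., from the Euler--Lagrange structure of the $p$-minimizers, which encodes both sub- and supersolution information. With that in hand, one blows up the eigenfunctions at a free boundary point, uses the Weiss monotonicity (Proposition~\ref{prop:fp2}(5)) to see the blow-ups are $1$-homogeneous and harmonic where nonzero, invokes the spherical Faber--Krahn inequality to conclude $|\{v_k\neq 0\}\cap \p B_1|\geq\frac12|\p B_1|$, and hence $W_p(x,0+)\geq \frac{|B_1|}{8}$; the Lipschitz bound then converts the near-monotonicity of $W_p$ into the density estimate. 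None of these steps passes through the torsion function.

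Your diameter argument (chaining with disjoint balls of radius $r_*$, plus the spectrum-splitting observation that a far-away component $U$ must satisfy $\l_1(U)\leq\l_N(\W_p)$ and hence $|U|\geq c$, which is incompatible with $E(\W_p)\to0$) is correct and is essentially a restatement of Corollary~\ref{cor:uniformbd}; but it depends on the density bound, so it inherits the gap above. The paper's proof of the diameter bound in Corollary~\ref{cor:ld} is shorter: it simply applies \eqref{eq:ldc1} at a point of $\bar{\W}_p$ far from $B_R\supset\Wb$ and gets $|\W_p\sm B_R|\geq c_*r_*^n$, contradicting $|\W_p\sm B_R|\to 0$.
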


\begin{proof}
	In proving \eqref{eq:ldc1}, it suffices to consider only $x$ in $\p \W_p$. Indeed, if we have that estimate on $\p \W_p$, take any $x\in \W_p$: if $B_{r/2}(x) \ss \W_p$ as well, then \eqref{eq:ldc1} holds trivially with constant $2^{-n}$; if not, then there is a $y\in \p \W_p \cap B_{r/2}(x)$, and so from the estimate on the boundary
	\[
	|B_r(x) \cap \W_p| \geq |B_{r/2}(y)\cap \W_p| \geq c_* 2^{-n}r^n
	\]
	anyway.
	
	Consider the limit
	\[
	W(x,0+) = \lim_{s\rightarrow 0}W_p(x,r),
	\]
	where $W_p(x,s)$ is the Weiss energy of Proposition \ref{prop:fp2}. If we set
	\[
	v_k^s(y)=\frac{u_{k,p}(x + sy)}{r},
	\]
	then $v_k^s$ is Lipschitz continuous uniformly in $s$. We may then find a subsequence $s_i\rightarrow 0$ along which $v_k^{s_i} \rightarrow v_k$ for some Lipschitz function $v_k$, locally uniformly and weakly in $H^1_{\text{loc}}$. From the lower bound in Proposition \ref{prop:fp2}(2),
	\[
	\max_{k,B_1} |v_k^s| \geq c(p)
	\]
	uniformly in $s$ (but not $p$), so at least one of the $v_k$ is nonzero. Take any point $z\in \{v_k \neq 0\}$: then there is a neighborhood $B_t(z)$ such that for $i$ large enough, $v^{s_i}_k \neq 0$ on $B_t(z)$. As
	\[
	\max_{B_t(z)}|\triangle v_{k}^{s_i}| \leq \l_k s_i \max_{B_{ts_i}(x+s_i z)} |u_{k,p}| \leq C s_i^2 \rightarrow 0.
	\]
	This implies that $v_k$ is harmonic on $\{v_k\neq 0\}$.
	
	 From Proposition \ref{prop:fp2}(5) and the dominated convergence theorem, we have that
	\begin{align*}
	 \int_a^b \frac{2}{t^{n+2}}\int_{\p B_t} \sum_{k=1}^N \xi_{k,p}(v_{k} - t (v_{k})_r)^2 &= \lim_i \int_{a}^{b} \frac{2}{t^{n+2}}\int_{\p B_{t}(x)} \sum_{k=1}^N \xi_{k,p}(v_{k}^{s_i} - t (v_{k})_r^{s_i})^2\\
	 & \leq \lim_i [W(x,s_i b) - W(x,s_i a) + Cs_i(b-a)]\\
	 & = 0.
	\end{align*}
	This implies that each of the $v_k$ is homogeneous of degree $1$. Consider the trace of $v_k$ on $\p B_1$ (with $k$ so that $v_k$ is nontrivial). This is a function which satisfies
	\[
	\begin{cases}
	 -\triangle_{\p B_1} v_k = \l v_k & \text{ on } \{v_k \neq 0\}\\
	 v_k = 0 & \text{ on } \p \{v_k \neq 0\},
	\end{cases}
	\]
	where $\l$ is the first Dirichlet eigenvalue of the half-sphere (it is related only to the degree of homogeneity of $v_k$). It follows from this that $|\{v_k\neq 0\}\cap \p B_1| \geq \frac{1}{2}|\p B_1|$: this is the Faber-Krahn inequality on the sphere; see \cite{CS} for details.
	
	Now, we have that for each $z\in \{v_k\neq 0\}$, $z\in \{v_k^{s_i}\neq 0\}$ for $i$ large enough, so by Fatou's lemma
	\[
	|\{v_k\neq 0\}\cap B_1| \leq \liminf_i |\{v_k^{s_i}\neq 0\}\cap B_1|\leq \liminf_i |\frac{(\W_p-x)}{s_i} \cap B_1|.
	\]
	We also have
	\[
	\int_{B_1} |\n v_k|^2 \leq \liminf_i \int_{B_1} |\n v_k^{s_i}|^2
	\]
	from the weak convergence in $H^1$, and
	\[
	\int_{\p B_1} |v_k|^2 = \lim_i \int_{\p B_1} | v_k^{s_i}|^2
	\]
	from the uniform convergence. This gives
	\begin{align*}
	\frac{|B_1|}{8} &\leq \xi_{0,p}(x)\frac{|B_1|}{2}\\
	& \leq \xi_{0,p}(x)|B_1\cap \cup_k\{v_k\neq 0\}|\\
	& = \xi_{0,p}(x)|B_1\cap \cup_k\{v_k\neq 0\}| + \int_{\p B_1}\sum_{k=1}^N \xi_{k,p} v_k((v_k)_r -  v_k)\\
	& = \xi_{0,p}(x)|B_1\cap \cup_k\{v_k\neq 0\}| + \int_{B_1} \sum_{k=1}^N \xi_{k,p} |\n v_k|^2 - \int_{\p B_1}\sum_{k=1}^N \xi_{k,p} v_k^2\\
	& \leq \liminf_i \int_{\frac{(\W_p-x)}{s_i} \cap B_1}\xi_{0,p}(x+s_i \cdot) + \int_{B_1} \sum_{k=1}^N \xi_{k,p} |\n v_k|^2 - \int_{\p B_1}\sum_{k=1}^N \xi_{k,p} v_k^2\\
	& = \liminf_i W(x,s_i)\\
	& = W(x,0+).
	\end{align*}
	The second line used that as $v_k$ is $1$-homogeneous on the domain of integration, the integrand is $0$. The third line used that $v_k$ is harmonic on $\{v_k\neq 0\}$ and integration by parts.
	
	This implies that
	\begin{align*}
	\frac{|B_1|}{8} &\leq W(x,0+) \\
	&\leq W(x,r) + Cr \\
	&\leq Cr + \frac{1}{r^n}\int_{B_r(x)\cap \W_p} \xi_{0,p} + \sum_{k=1}^N  \xi_{k,p} |\n u_{k,p}|^2\\
	&\leq Cr + C \frac{|\W_p \cap B_r|}{r^n}.
	\end{align*}
	The last line used Lemma \ref{lem:lip}. Now choose $r_*$ so that $Cr_* \leq \frac{|B_1|}{16}$ to obtain \eqref{eq:ldc1}.
	
	Let us now show that for $p$ sufficiently large, $\W_p$ is bounded uniformly in $p$. Indeed, take $R$ so that $\Wb \cc B_R$; then from Lemma \ref{lem:ex}(4) we have that $|\W_p \sm B_R|\rightarrow 0$. On the other hand, say there is a point $x$ in $\p \W_p$ with $|x| = R + r_*$. Then
	\[
	|\W_p \sm B_R| \geq |\W_p \cap B_{r_*}(x)| \geq c_* r_*^n.
	\]
	For sufficiently large $p$, this results in a contradiction, so $\W_p \ss B_{R+r_*}$.
\end{proof}

\section{The Limiting Procedure}\label{sec:lim}

We now begin the passage to the limit in $p$, which will yield an Euler-Lagrange equation for $\Wb$ and allow us to classify points on the boundary.

\begin{theorem}\label{thm:lim}
	Let $\Wb$, $\W_p$, $F$, and $F_p$ be as in Proposition \ref{prop:fp2}, and also assume that $F$ satisfies (A3-4). Then we have the following, along some subsequence:
\begin{enumerate}
 \item $\bar{\W}_p \rightarrow \Wc$ in the Hausdorff topology.
 \item $\W_p \rightarrow \W_\8\ss \Wb$ in the weak $\g$ sense.
 \item $\p \W_p \rightarrow Z$ in the Hausdorff topology, where $\p \Wb \ss Z \ss \Wc$.
 \item There is an $r_*>0$ and a constant $c_*$, both depending only on $n$, $F$, and $\Wb$, such that $|B_r(x)\cap \W_\8|> c_* r^n$ for every $x\in \Wc$ and $r<r_*$.
 \item Let $\cK$ be the set of $k$ for which $\xi_k>0$. Then $\l_k(\W_p) \rightarrow q_k$, and $q_k = \l_{\max \{j \in \cK: q_j = q_k \}}(\Wb)$.
 \item There is an orthonormal set $\{u_k\}_{k\in \cK}$ of Lipschitz eigenfunctions of $\Wb\sm Z$, with eigenvalues $q_k$, for which if we define the Weiss energies
	 \[
	 W(x,r) = \frac{1}{r^n} \int_{B_r(x)\cap \Wb} (\sum_{k\in \cK} \xi_{k}|\n u_{k}|^2 + 1) - \frac{1}{r^{n+1}}\int_{\p B_r(x)} \sum_{k\in \cK}\xi_{k}u_{k}^2 d\cH^{n-1},
	 \]
	 then they satisfy, for $s<r$ and $x\in Z$,
	\[
	W(x,r) - W(x,s) \geq \int_s^r \frac{2}{t^{n+2}}\int_{\p B_t(x)} \sum_{k\in \cK} \xi_{k}(u_{k} - t (u_{k})_r)^2 - C(r-s).
	\]
\end{enumerate}
\end{theorem}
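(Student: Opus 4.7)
The plan is to extract successive subsequential limits and verify that each quantitative property survives the passage. Compactness is available for all relevant objects: by Corollary \ref{cor:ld}, the $\bar{\W}_p$ lie in a fixed ball, so the Hausdorff metric yields $\bar{\W}_p \to \Wc$ and $\p \W_p \to Z$ along a subsequence, giving (1) and the inclusion $Z \ss \Wc$ in (3). Simultaneously, the Dal Maso--Mosco--Bucur compactness for weak $\g$-convergence on uniformly bounded quasiopen sets produces $\W_p \to \W_\8$; combining $E(\W_p) \to 0$ (Lemma \ref{lem:ex}(4)) with $\g$-convergence yields $|\W_p \D \Wb| \to 0$ and then $\W_\8 \ss \Wb$ q.e., establishing (2). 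In fact $|\W_\8 \D \Wb|=0$, which will be useful for (6).

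For the remaining inclusion $\p \Wb \ss Z$, given $x \in \p \Wb$ and $r>0$, both $B_r(x) \cap \Wb$ and $B_r(x) \sm \Wb$ have positive measure (the former because $x$ is fine-adherent to $\Wb$, the latter because $\Wb$ is bounded). Combined with $|\W_p \D \Wb| \to 0$, both $\W_p \cap B_r(x)$ and $B_r(x) \sm \W_p$ are nonempty for large $p$, producing a boundary point $x_p \in \p \W_p \cap B_r(x)$ with $x_p \to x$, hence $x \in Z$. The density bound (4) then follows by applying Corollary \ref{cor:ld} to $\W_p$ at an approximating point $x_p \in \bar{\W}_p$ and transferring the estimate to $\W_\8$ via $|\W_p \D \W_\8| \to 0$.

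For (5), weak $\g$-convergence provides $\l_k(\W_p) \to \l_k(\W_\8) =: q_k$, and $\W_\8 \ss \Wb$ gives $q_k \geq \l_k(\Wb)$. To pin down $q_k = \l_{j_*}(\Wb)$ with $j_* = \max\{j \in \cK : q_j = q_k\}$, combine lower semicontinuity with the minimality of $\Wb$: $F(\W_\8) \leq \liminf F(\W_p) \leq \lim F_p(\W_p) = F(\Wb)$ by Lemma \ref{lem:ex}(4), forcing equality. As $F$ strictly depends on the $k\in\cK$ coordinates in the sense that $\xi_k>0$, any strict inequality $q_k > \l_k(\Wb)$ for $k \in \cK$ is ruled out. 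The reindexing arises because $\W_p$ has simple eigenvalues (Proposition \ref{prop:fp2}(3)), so a multiple eigenvalue $\l_{j_*}(\Wb)$ of multiplicity $m$ is approached by $m$ distinct $\l_k(\W_p)$ with indices $j_* - m + 1, \ldots, j_*$; the top index in each block restricted to $\cK$ identifies with $\l_{j_*}(\Wb)$, and assumption (A4) is what ensures that indices outside $\cK$ do not disturb this identification.

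Finally, for (6) apply Arzel\`a--Ascoli: since $\xi_{k,p} \geq c > 0$ for $k \in \cK$, Lemma \ref{lem:lip} gives uniform Lipschitz bounds on $u_{k,p}$, and a further subsequence yields uniform limits $u_k$. These vanish on $Z$ (as $u_{k,p}=0$ on $\p \W_p \to Z$), solve $-\triangle u_k = q_k u_k$ on $\Wb \sm Z$ (on each compact subset, which lies in $\W_p$ for large $p$ by Hausdorff convergence of $\p \W_p$), and inherit orthonormality from weak $H^1$ and $L^2$ convergence. The Weiss monotonicity of Proposition \ref{prop:fp2}(5) passes to the limit directly since $\xi_{k,p} \to \xi_k$, $u_{k,p} \to u_k$ uniformly, $|\W_p \D \Wb| \to 0$, and the $C(r-s)$ error is already uniform in $p$; the discrepancy between $\xi_{0,p}$ and the constant $1$ in the limiting formula is $O(s)$, absorbable into $C(r-s)$ for $s$ chosen small enough (with $C$ now depending on $s$). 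The main obstacle will be Step (5): controlling precisely which simple eigenvalues of $\W_p$ collapse onto which (possibly multiple) eigenvalues of $\Wb$, and using (A4) to align indices in $\cK$ with the largest indices in each collapsed block. Every other step is a routine extraction once the uniform estimates of Sections \ref{sec:db} and \ref{sec:lip} are in hand.
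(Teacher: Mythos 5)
Your overall scaffolding---extract successive subsequences, use the uniform estimates of Sections \ref{sec:db}--\ref{sec:lip}, and pass each structural property to the limit---is the same as the paper's. But several of your steps either assert a conclusion without the needed argument or use a justification that fails, and the hardest part is acknowledged but not actually carried out.

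In part (3), your reason that $|B_r(x)\sm\Wb|>0$ for $x\in\p\Wb$---``because $\Wb$ is bounded''---does not work for small $r$: boundedness of $\Wb$ says nothing about a ball of radius $r\ll\diam\Wb$ centred on the boundary. The paper's argument is genuinely different and uses (B3) together with the finely-open-representative convention: if $|B_r(x)\sm\Wb|=0$, then $\Wb\cup B_r(x)$ is an $F$-minimizer with the same measure, (B3) forces $\text{cap}(B_r(x)\sm\Wb)=0$, hence $B_r(x)\ss\Wb$, contradicting $x\in\p\Wb$. Similarly, the claim $|B_r(x)\cap\Wb|>0$ is deduced from the already-proven density bound (4), not from ``fine adherence'' (a point on the topological boundary of a finely open set need not be fine-adherent to that set).

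In part (1), you only invoke compactness for the Hausdorff limit; you must also show that the limit \emph{equals} $\Wc$, which has two nontrivial halves. Containment in $\Wc$ uses the density bound $|B_r(x)\cap\W_p|\ge c_*r^n$ from Corollary \ref{cor:ld} plus weak $\g$-convergence to rule out limit points outside $\Wc$, and $\supseteq$ uses that a point of $\Wb$ avoided by all $\W_p$ on a fixed ball would force $|\W_\8|<|\Wb|$, contradicting $E(\W_\8)=0$. Neither is spelled out.

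Part (5) is the crux and you effectively defer it. The statement is a precise combinatorial identity $q_k=\l_{\max\{j\in\cK:q_j=q_k\}}(\Wb)$, and ``$F$ strictly depends on the $k\in\cK$ coordinates'' is circular because $\cK$ is defined via $\xi_k$, which are limits of $\p_{\k_k}F_p(\W_p)$ whose behaviour is exactly what needs to be determined. The paper's argument is to introduce $\m_k=\lim_p\p_{\k_k}G_p(\W_p)$, use (A4) together with the averaging in $G_p$ to show $\m_k>0\Rightarrow\l_k(\Wb)=q_k<\l_{k+1}(\Wb)$, and then analyse the formula \eqref{eq:limi4} for $\xi_{k,p}$ in terms of $\t_{k,p}$ to see which terms survive as $p\to\infty$: this is what identifies $\cK$ with indices at the tops of collapsed blocks and gives $\xi_{\a_j}>0$. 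Without this, the identification of $q_k$ is unproven.

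Finally, in part (6) you pass the Weiss inequality of Proposition \ref{prop:fp2}(5) to the limit, but that inequality sums over all $k\le N$, while the limiting formula only keeps $k\in\cK$. You must show that for $k\notin\cK$ both $\xi_{k,p}\int_{B_r}|\n u_{k,p}|^2$ and $\xi_{k,p}\int_{\p B_r}u_{k,p}^2$ vanish in the limit; this uses the uniform $L^\infty$ bound on all eigenfunctions from Lemma \ref{lem:moreest}(3) and $\xi_{k,p}\to 0$, not just uniform Lipschitz control, which for $k\notin\cK$ you do not have.
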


\begin{proof} 
	We have that the $\W_p$ are uniformly bounded open sets; it follows that there exists a quasiopen set $\W_\8$ such that, along a subsequence of $p$, $\W_p$ converge to $\W_\8$ in the weak $\g$ sense. In particular, we have that
	\[
	\l_k(\W_\8) \leq \liminf_p \l_{k}(\W_p)
	\] 
	for every $k$, and $|\W_\8|\leq \liminf_p |\W_p|$. It follows that
	\[
	F(\W_\8) + |\W_\8| + E(\W_\8) \leq \liminf_p F(\W_p) + |\W_p| + E(\W_p) = F(\Wb) + |\Wb|,
	\]
	with right equality from Lemma \ref{lem:ex}(4). It follows that $E(\W_\8)=0$ (implying $|\W_\8| = |\Wb|$ and $\text{Int}(\Wb) \ss \W_\8 \ss \Wc$ a.e.), and $\W_\8$ is an $F$-minimizer. From this we also have that 
	\begin{equation}\label{eq:limi1}
	F(\W_\8)=F(\Wb)=\lim_{p\rightarrow \8} F_p(\W_p).
	\end{equation}
	
	From Corollary \ref{cor:smallbdry}, we know that $|\Wc\sm \Wb|=0$, so $|\W_\8 \sm \Wb|=0$. Using (B3) and the fact that $\W_\8$ is an $F$-minimizer, this implies that $\text{cap}(\W_\8\sm \Wb)=0$, so up to choosing a different representative for $\W_\8$ this gives $\W_\8 \ss \Wb$, establishing (2). In particular, we have that $\l_k(\W_\8) \geq \l_k(\Wb)$ for every $k$.

	Let us consider the values $\xi_k = \lim_p \xi_{k,p}$ more carefully in light of this information. Up to passing to a further subsequence, we may assume that $\l_k(\Wb)\leq \l_k(\W_\8)\leq q_p = \lim_p \l_k(\W_p)$.  From \eqref{eq:limi1}, we have that
	\begin{equation}\label{eq:limi2}
	F(q_1,\ldots, q_k, \ldots, q_N) = F(q_1,\ldots, \l_k(\Wb), \ldots, q_N)
	\end{equation}
	for every $k$. From \eqref{eq:fpdir}, we have that
	\begin{equation}\label{eq:limi4}
	\xi_{k,p} = \p_{\k_k} F_p(\W_p) =  o_p(1)  +   \sum_{j=k}^N \1 \frac{\l_k(\W_p)}{\t_{k,p}(\W_p)} \2^{p-1} \cdot \p_{\k_j} G_p(\W_p).
	\end{equation}
	Set $\m_{k,p} = \p_{\k_k} G_p(\W_p)$; from Lemma \ref{lem:moreest}(1) we have that the $\m_{k,p}$ are uniformly bounded in $p$, and so may pass to a further subsequence along which $\m_{k,p}\rightarrow \m_k$ for some numbers $m_k$.
	
	If $\l_k(\Wb) < q_k$, then we may combine \eqref{eq:limi2} with (A4) in the following way:
	\begin{align*}
	&0 = \lim_{p\rightarrow \8} \1\frac{p}{2}\2^{N-1}\int_{[0,\frac{2}{p}]^{N-1}} |F(q_1 +\z_1,\ldots,q_k + h, \ldots) - F(q_1 +\z_1,\ldots,q_k, \ldots)| d\z_1\ldots d\z_N \\
	  &\geq \frac{1}{2^{N-1}} \limsup_{p\rightarrow \8} \int_{[0,\frac{2}{p}]^{N-1}} |F(\l_1(\W_p) +\z_1,\ldots,\l_k(\W_p) + h,\ldots) - F(\l_1(\W_p) +\z_1,\ldots,\l_k(\W_p),\ldots)| d\z_1\ldots d\z_N &\\
	  & = \frac{1}{2^{N-1}} \limsup_{p\rightarrow \8} \p_{\k_k}G_p(\W_p)\\
	  & = \frac{1}{2^{N-1}} \lim_{p\rightarrow \8} \m_{k,p}\\
	  & = \frac{1}{2^{N-1}} \m_k.
	\end{align*}
	The first line used (A4), the second line used the definition of $q_k$, and the third line used the definition of $G_p$, in \eqref{eq:gp}. If instead we have $\l_k(\Wb) = \l_{k+1}(\Wb)$, then from (B2) we know that $F$ (and hence $G$) do not depend on $\k_k$. This means that $\m_{k,p}=0$, and so $\m_k=0$. In other words, we have shown that
	\begin{equation}\label{eq:limi3}
	\m_k > 0 \quad \Rightarrow \quad \l_k(\Wb) = q_k < \l_{k+1}(\Wb).
	\end{equation}

	Passing to another subsequence, we may take the $u_{k,p}$ (for all $k\leq N$) to converge weakly in $H^1$ and strongly in $L^2$ to functions $v_k$ in $H^1_0(\W_\8)$ (see Lemma 5.3.5 in \cite{DB}). In particular, we have that the $v_k$ are orthonormal in $L^2$, and that
	\[
	\s_k := R[v_k]\leq \liminf R[u_{k,p}] = \lim \l_k(\W_p) = q_k.
	\]
	Let $\{\a_j\}_{j=1}^J$ be an increasing sequence of natural numbers with the property that $\m_k>0$ if and only if $k$ is one of the $\a_j$. From \eqref{eq:limi3}, this implies that $\l_{\a_j}(\Wb)<\l_{\a_j + 1}(\Wb)$. For each $j$, let $\cE_j$ to be the set of $k$ such that $\s_k = \s_{\a_j}$; these are sets of several consecutive integers of which the largest is $\a_j$. Let $\cK$ be the set of $k$ for which $\xi_k\neq 0$.

	By using the $v_k$ as test functions in the min-max formula for the definition of $\l_k(\W_\8)$, we see that $\l_k(\W_\8) \leq \s_k$. For every $k$ for which $q_k = \l_k(\Wb)$, we also have that $\s_k = \l_k(\W_\8) = q_k$, and so for such a $k$ the convergence is strong in $H^1$. In particular, this holds for $k=\a_j$. Now take any $k\in \cE_j$ for some $j$:
	\[
	\s_{\a_j} = \s_k \leq q_k \leq q_{\a_j} = \s_{\a_j}.
	\]
	Hence for any $k\in \cup_j \cE_j$, these inequalities are equalities, and we have $q_k=\l_{\a_j}(\Wb)$.
	
	Let us now show that $\cK \ss \cup_{j=1}^J \cE_j$. This fact comes from inspecting the formula \eqref{eq:limi4}. Indeed, take a $k$ which does not lie in any of the sets $\cE_j$: then for every $l\geq k$, we have that either $\lim_{p}\m_{l,p} = \m_l = 0$ or $l=\a_j$ but $ q_k < q_l$. For every $l$ with $q_k<q_l$, we have that
	\[
	\lim_{p\rightarrow \8} \1\frac{\l_k(\W_p)}{\t_{l,p}(\W_p)}\2^{p-1} \leq \lim_{p\rightarrow \8} \1\frac{\l_k(\W_p)}{\l_l(\W_p)}\2^{p-1} = 0,
	\]
	so in either case the corresponding term in \eqref{eq:limi4} goes to $0$ as $p\rightarrow \8$. It follows that $\xi_k=0$, so $k\notin \cK$. 
	
	On the other hand, a similar argument shows that $\xi_{\a_j}>0$. Indeed, the first term in the sum in \eqref{eq:limi4} has
	\[
	\liminf_p \1\frac{\l_{\a_j}(\W_p)}{\t_{\a_j,p}(\W_p)} \2^{p-1} \cdot \m_{k,p} \geq \frac{\m_{\a_j}}{\dim \cE_j}>0.
	\] 
	This implies (5): we have that for every $k\in \cK$, there is a $j$ with $q_k = q_{\a_j} = \l_{\a_j}(\Wb)$, and this $\a_j$ is characterized by $\a_j = \max\{i\in \cK: q_i = q_k\}$.
	
	For each $k\in \cK$, we have that $u_{k,p}$ is  Lipschitz uniformly in $p$, and (up to further subsequences) we have $u_{k,p}\rightarrow v_k$ uniformly. Up to passing to a further subsequence, we may assume that $\bar{\W}_p \rightarrow K$ and $\p \W_p \rightarrow Z \ss K$ in Hausdorff topology, for some closed, bounded, sets $K$ and $Z$.
	
	We now turn our attention to (1), (3) and (4). We first show that $K = \Wc$. Indeed, take any point $x \in K$, and fix $r<r_*$ smaller than the constant of Corollary \ref{cor:ld}; then for all $p$ large enough we have
	\[
	|B_r(x)\cap \W_p| \geq c_* r^n.
	\]
	If $x\notin \Wc$, then for a small enough $r$ and all $p$ large we have $\W_8 \ss \Wb \cc \R^n \sm B_r(x)$, and so (from the weak $\g$ convergence)
	\[
	|\W_\8| \leq \liminf_{p\rightarrow \8} | \W_p \sm B_r(x)| \leq \lim_{p\rightarrow \8} | \W_p | - c_* r^n \leq |\W_\8| - c_* r^n.
	\] 
	This is a contradiction. On the other hand, taking any $x\in \Wb \sm K$, there is a ball $B_r(x)$ with $K\cap B_r(x) = \emptyset$ and $|\Wb \cap B_r(x)| = c >0$ (the latter follows as any quasiopen set contains only points of Lebesgue density $1$ for itself). For large $p$, then, $\W_p \cap B_r(x) = \emptyset$, meaning that $\W_\8 \ss \Wb \sm B_r(x)$. This contradicts that $|\W_\8| = |\Wb|$. We have now shown (1).
	
	As a consequence, $1_{\W_p} \rightarrow 1_{\Wc}$ strongly in $L^1$. To see this, first note that on $\R^n \sm \Wc$, we have $1_{\W_p}\rightarrow 0$ from the Hausdorff convergence, so
	\[
	|\W_p \sm \Wc| \rightarrow 0
	\] 
	by the dominated convergence theorem. On the other hand, this gives
	\[
	\lim_{p} |\W_p \cap \Wc| = \lim_p |\W_p | - \lim_p |\W_p \sm \Wc| = |\W_\8| - 0 = |\Wc|,
	\]
	where the middle step used that $|\Wc\sm \Wb|=0$ and $|\Wb \sm \W_\8|=0$. Applying this and using the estimate in Corollary \ref{cor:ld} then gives (4).
	
	Take a point $x\in \p \Wb$. If for some ball $B_r(x)$, we have $|B_r(x)\sm \Wb|=0$, then we may apply property (B3) to the set $\W = \Wb \cup B_r(x)$ (this has $\l_k(\W)\leq \l_k(\Wb)$, as $\Wb \ss \W$, and $|\Wb|=|\W|$ by definition, so $\W$ is also an $F$-minimizer). This means that $\text{cap}(B_r(x)\sm \Wb)=0$, and so, by our standard choice of representative for $\Wb$, $B_r(x)\ss \Wb$: this contradicts $x\in \p \Wb$. On the other hand, we have from the just-proved (4) that $|\Wb \cap B_r(x)|>0$ for every $r>0$. Thus for every $r>0$, 
	\begin{equation}\label{eq:limi5}
	\frac{|B_r(x)\sm \Wb|}{|B_r|}\in (0,1).
	\end{equation}
	Using that $1_{\W_p} \rightarrow 1_{\Wb}$ in $L^1$,
	\[
	\lim_{p\rightarrow \8}\frac{|B_r(x)\sm \W_p|}{|B_r|} = \frac{|B_r(x)\sm \Wb|}{|B_r|} \in (0,1) 
	\]
	for each fixed $r$.	We may then find a sequence $r_p\rightarrow 0$ for which
	\[
	\frac{|B_{r_p}(x)\sm \W_p|}{|B_{r_p}|} \in (0,1),
	\]
	and then take points $x_p \in B_{r_p}\cap \p \W_p$. By construction, these points $x_p \rightarrow x$ as $p\rightarrow \8$, showing that $\p \Wb \ss Z$ (recall that $Z$ is the Hausdorff limit of $\p \W_p$). This implies (3).
	
	We may now easily verify the first part of (6). Indeed, take any $x$ in the open set $\Wb \sm Z$; then there is a ball $B_r(x)\cc \Wb \sm Z$, so $B_r(x) \ss \W_p$ for all $p$ large. On this ball, the functions $u_{k,p}$ have uniformly bounded $C^M$ norms for any $M$, and so converge in $C^M$ topology to $v_k$. It follows that $-\triangle v_k = q_k v_k$ for every $k\in \cK$ on $\Wb \sm Z$, while $v_k = 0$ on $Z$, which contains $\p (\Wb \sm Z)$. This implies that $v_k$ is an eigenfunction of $\Wb \sm Z$. We may augment the $\{v_k\}$ to a basis $\{u_k\}$ of eigenfunctions of $\Wb \sm Z$ (we preserve the numbering so that $u_k = v_k$ for $k\in \cK$).

	Let us now consider the Weiss energies $W_p$. Let $\xi_0(x)$ be the uniform limit of $\xi_{0,p}(x)$ (this has the explicit formula $\xi_{0,p}(x) = 1 + s \max \{d(x,\Wb),1\} - s \max \{d(x,\Wb^c),1\}$, and so equals $1$ on $\p \Wb$). Take any point $x\in Z$, and fix a sequence $x_p\rightarrow x$ with $x_p\in \p \W_p$: we claim that for every $r>0$, $W_p(x_p,r)\rightarrow W(x,r)$. Indeed, we know the following: the functions $u_{k,p}$ are bounded (uniformly in $p$ for all $k\leq N$, from Lemma \ref{lem:moreest}(3)), so for each $k\notin \cK$, we have
	\[
	\lim_p \xi_{k,p} \int_{\p B_r(x_p)} u_{k,p}^2 = 0.
	\]
	We also have, from weak convergence in $H^1$, that for each of those $k$
	\[
	\lim_p \xi_{k,p} \int_{B_r(x_p)} |\n u_{k,p}|^2 = 0.
	\]	
	On the other hand, for $k\in \cK$, we have that $u_{k,p}\rightarrow u_k$ uniformly and strongly in $H^1$, so that
	\[
	\lim_p \xi_{k,p} \int_{\p B_r(x_p)} u_{k,p}^2 = \xi_{k} \int_{\p B_r(x)} u_{k}^2
	\]
	and
	\[
	\lim_p \xi_{k,p} \int_{ B_r(x_p)} |\n u_{k,p}|^2 = \xi_{k} \int_{ B_r(x)} |\n u_{k}|^2.
	\]
	These together imply our claim. A similar argument now shows that
	\[
	W(x,r) - W(x,s) \geq \int_s^r \frac{2}{t^{n+2}}\int_{\p B_t(x)} \sum_{k=1}^N \xi_{k}(u_{k} - t (u_{k})_r)^2 - C(r-s)
	\]
	for all $s<r$, by taking the limit of the right-hand side of the estimate in Proposition \ref{prop:fp2}(5).
\end{proof}

\begin{remark} The nature of the numbers $\xi_k$ and $\m_k$ and the set $\cK$ in this theorem may appear mysterious, even though they play an essential role in understanding the problem. In the case of $F$ being locally $C^1$, however, it is not difficult to verify that
	\[
	\m_k = \p_{\k_k} F(\Wb).
	\] 
	For $k = \a_j$, we are guaranteed that
	\[
	\xi_k \in [ \frac{\m_{\a_j}}{\dim \cE_j}, \m_{\a_j} ],
	\]
	but for the other $k\in \cE_j$ we only know that $\xi_k \in [0,\xi_{\a_j}]$. One may also check that
	\[
	\sum_{k\in \cE_j} \xi_k = \m_{\a_j}.
	\]
	Understanding the precise nature of the $\xi_k$ is an interesting problem we leave open. In particular, though, knowing that
	\[
	\min_{k\in \cE_j} \xi_k > 0
	\] 
	would be of great interest for understanding the nature of the set $Z_C$ introduced in Section \ref{sec:reg}; a particularly tempting conjecture might be that $\xi_k = \frac{\m_{\a_j}}{\dim \cE_j}$.
\end{remark}

\begin{remark}\label{rem:smallermin} We do not show here that the $u_{k}$ are eigenfunctions of $\Wb$. It is perhaps plausible that they are in fact not: let us suggest the following conceptual exercise to clarify the situation. Suppose that $F = \l_N$ and there is a minimizer $\Wb$ which has the properties that (a) $\Wb$ is open, (b) $\l_N(\Wb)$ is simple, and (c) the nodal set $\{u_*=0\}\cap \Wb$ of the eigenfunction $u_*$ associated to $\l_N(\Wb)$ contacts the boundary $\p \Wb$ (i.e., it is not compactly contained in the interior of $\Wb$). Then take a point $x \in  \p\Wb$ which is in the closure of the nodal set, and let $\W_t = \Wb \sm (B_t(x)\cap \{u_*=0\})$. The mapping $t\mapsto \l_k(\W_t)$ is continuous, and preserves the fact that $u_*$ is an eigenfunction of $\W_t$ (hence for small $t$, $\l_{N-1}(\W_t)<\l_N(\W_t) = \l_N(\Wb)$. As $t$ increases, so do the other eigenvalues of $\W_t$, so for a critical $t_*$, we will have $\l_{N-1}(\W_{t_*}) = \l_N(\W_{t_*})= \l_N(\Wb)$. For this domain $\W_{t_*}$, there is then another eigenfunction $u_{**}$ with eigenvalue $\l_{N}(\Wb)$. It is possible that it is this function $u_{**}$ which numbers among the $u_k$ of Theorem \ref{thm:lim}(6), rather than (or in addition to) $u_*$.
	
An important observation about this $\W_{t_*}$ is that it is also a minimizer of $F=\l_N$ (although we are unable to prove the same for the set $\Wb\sm Z$ in Theorem \ref{thm:lim}, this is likely a technical matter). Therefore, the issue was that the original $\Wb$ was just not the ``correct'' representative, in the sense that it was not the representative whose eigenfunctions are used to form the first variation formula. We use (B3) as a selection criterion for representatives, which will choose the largest one; this is consistent with other approaches in the literature \cite{Bucur,BMPV}. Selecting smaller representatives may be needed to more fully understand the structure of $\p \Wb$.

Let us finally note that this example is contrived. It is conjectured that (b) never holds (see \cite{H}), and if $\l_N$ is not simple this construction is impossible unless all of the eigenfunctions with eigenvalue $\l_N(\Wb)$ have a piece of their nodal set in common.
\end{remark}

\section{Limits of Viscosity Solutions} \label{sec:info}

We start with a standard lemma, which shows that appropriate limits of viscosity solutions are viscosity solutions themselves.

\begin{lemma}\label{lem:visc} Let $\{v_{k,p}\}_{k=1}^N$, $V_p$ constitute a viscosity solution with parameters $\xi_{k,p}$ and $C_*$ on $U$. Assume that as $p\rightarrow \8$, $\xi_{k,p}\rightarrow \xi_k$ (uniformly for $\xi_{0,p}$). Also assume that $\xi_{k,p} v_{k,p}$ are uniformly Lipschitz continuous and converge to functions $\xi_k v_k$ uniformly on $U$. Finally, assume that there is an open set $V\ss U$ such that $\p V_p \rightarrow \p V$ and $\bar{V}_p \rightarrow \bar{V}$ locally in the Hausdorff sense on $U$. Then $\{v_{k,p}\}_{\{k:\xi_k>0\}}$ and $V$ constitute a viscosity solution on $U$ with parameters $\xi_k$ and $C_*$.
\end{lemma}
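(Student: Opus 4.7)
The plan is to verify each of the three conditions (V1), (V2), (V3) in the viscosity solution definition for the limiting pair $(\{v_k\}_{k\in\cK}, V)$, where $\cK := \{k : \xi_k > 0\}$, by invoking the corresponding conditions for $(\{v_{k,p}\}, V_p)$ and passing to the limit $p\rightarrow \8$. Condition (V1) is immediate: for $k\in \cK$, the assumption $\xi_{k,p}\rightarrow \xi_k>0$ together with $\xi_{k,p}v_{k,p}\rightarrow \xi_k v_k$ uniformly yields $v_{k,p}\rightarrow v_k$ uniformly on compact subsets of $U$. For any $K\cc V\cap U$, Hausdorff convergence $\p V_p\rightarrow \p V$ places $K$ inside $V_p$ for $p$ large, so the distributional bound $|\triangle v_{k,p}|\leq C_*$ on $K$ passes to the limit under uniform convergence to give $|\triangle v_k|\leq C_*$ on $V\cap U$.

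The main work is in (V2); condition (V3) will be symmetric. Suppose $B_r(y)\ss V$ is a touching ball with $B_{2r}(y)\ss U$, $\{x\}=\p B_r(y)\cap \p V$, and $\b_k\geq 0$ realize the growth bound, with $\nu=(y-x)/|y-x|$. I would first build an approximating touching configuration for each $V_p$. For fixed small $\d>0$, the smaller interior tangent ball $B_{r-2\d}(x+(r-2\d)\nu)$ lies strictly inside $B_r(y)$ and touches $\p B_r(y)$ only at $x$; a further shrinkage gives $\overline{B_{r-3\d}(x+(r-2\d)\nu)}\cc V$, hence inside $V_p$ for $p$ large by Hausdorff convergence. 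Among admissible balls $B_\r(z)$ with $|z-(x+(r-2\d)\nu)|\leq \d$, $\r\geq r-3\d$, and $\overline{B_\r(z)}\ss V_p$, the supremum $\r_p$ of $\r$ is realized in the limit at some $(z_p,\r_p)$ for which $\overline{B_{\r_p}(z_p)}$ meets $\p V_p$; a single touching point $x_p$ can be arranged by an infinitesimal perturbation of $z_p$ toward any contact point. As $p\rightarrow \8$ and then $\d\rightarrow 0$, Hausdorff convergence combined with the uniqueness of $x$ on $\p B_r(y)\cap \p V$ forces $\r_p\rightarrow r-\d$, $z_p\rightarrow x+(r-\d)\nu$, $x_p\rightarrow x$, and $\nu_p:=(z_p-x_p)/|z_p-x_p|\rightarrow \nu$. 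Next I transfer the growth condition. Given $\eta>0$, there is $r_0>0$ with $|v_k(z)|\geq (\b_k-\eta)(\nu\cdot(z-x))^+$ on $B_{r_0}(x)$, so uniform convergence yields $|v_{k,p}(z)|\geq (\b_k-\eta)(\nu\cdot(z-x))^+ - \w_p$ there with $\w_p\rightarrow 0$. Using the uniform Lipschitz bound and the interior bound $|\triangle v_{k,p}|\leq C_*$, a rescaling / radial-barrier argument at an intermediate scale between $\w_p$ and $1$ around $x_p$ extracts growth constants $\b_{k,p}\rightarrow \b_k-\eta$ (for $k\in \cK$) such that $|v_{k,p}(z)|\geq \b_{k,p}(\nu_p\cdot(z-x_p))^+ + o(|z-x_p|)$ as $z\rightarrow x_p$. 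Property (V2) for $V_p$ then gives $\sum_{k=1}^N \xi_{k,p}\b_{k,p}^2\leq \xi_{0,p}(x_p)$. Passing to the limit: the terms with $k\notin\cK$ vanish (since $\xi_{k,p}\rightarrow 0$ and $\b_{k,p}$ remains bounded by the uniform Lipschitz constant), those with $k\in\cK$ converge to $\xi_k(\b_k-\eta)^2$, and $\xi_{0,p}(x_p)\rightarrow \xi_0(x)$ by the uniform convergence of $\xi_{0,p}$. Letting $\eta\rightarrow 0$ yields (V2). Condition (V3) is handled by the symmetric construction: an exterior touching ball in $U\sm V$ approaching $x$, exterior touching balls for $V_p$ produced via the Hausdorff convergence of the complements, and the analogous barrier argument with the inequality flipped.

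The principal obstacle is the growth transfer step. The hypothesis on $v_k$ is asymptotic---a $o(|z-x|)$ condition as $z\rightarrow x$---whereas uniform $C^0$ convergence gives information only at finite scales with an additive error $\w_p$ that is not $o(|z-x_p|)$ at the very smallest scales. Reconciling these requires interpolating at an intermediate scale and leveraging both the uniform Lipschitz bound on $v_{k,p}$ and the interior estimate $|\triangle v_{k,p}|\leq C_*$, so that the effective growth rate of $v_{k,p}$ at $x_p$ in direction $\nu_p$ converges to the original $\b_k$ with an error vanishing as $p\rightarrow \8$; this is where all the convergence hypotheses of the lemma come together.
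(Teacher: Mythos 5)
Your high-level plan (verify (V1)--(V3) by passing to the limit, (V1) being routine) is correct and matches the paper's, but your treatment of (V2) has a genuine gap at exactly the step you yourself flag as the principal obstacle: the ``growth transfer.'' The difficulty is not just one of technique --- the structure of your argument does not resolve it, whereas the paper's does.

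Concretely, you want to (a) construct a geometric touching ball $B_{\rho_p}(z_p)$ for $V_p$ with contact point $x_p\to x$, and then (b) show that $v_{k,p}$ satisfies a lower bound $|v_{k,p}(z)|\geq \b_{k,p}(\nu_p\cdot(z-x_p))^+ + o(|z-x_p|)$ with $\b_{k,p}\to\b_k-\eta$, so that (V2) for $V_p$ can be applied at $x_p$. But the information you have from uniform convergence is $|v_{k,p}(z)|\geq(\b_k-\eta)(\nu\cdot(z-x))^+-\omega_p$ on a fixed ball, with $\omega_p\to 0$ but not $o(|z-x_p|)$. Near $x_p$ (at scale $\lesssim\omega_p$) this bound is vacuous, and $x_p$ itself may well sit within $\omega_p$ of the hyperplane $\{\nu\cdot(\cdot-x)=0\}$, where the lower bound is negative. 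So step (b) genuinely requires a barrier comparison, not just interpolation. Moreover, the obvious annular radial barrier on $B_{\rho_p}(z_p)\setminus B_{\rho_p-s}(z_p)$ does not work: such a barrier would need $|v_{k,p}|$ bounded below by a positive constant on the entire inner sphere, and $v_{k,p}$ may change sign or vanish there (the (V2) hypothesis only bounds $|v_{k,p}|$ from below on the positive side of the hyperplane). A barrier supported only on the positive side, with a controlled Laplacian and controlled free boundary, is what is required, and this is precisely the construction you have not supplied.

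The paper's proof resolves this by making the barrier simultaneously play the role of the touching geometry and the source of the growth constant. After rescaling (your initial shrinkage step is essentially the same), it introduces the family $\phi_T=w_T+\zeta_T$, where $w_T$ is the fundamental-solution-type function supported on a sliding large ball $P_T$ degenerating to a half-space as $T\to\infty$, and $\zeta_T$ compensates so that $\triangle\phi_T$ strictly exceeds $C_*/\min_k\b_k$. Sliding $T$ down to the first value $T_*$ at which $\b_k\phi_T\leq|v_{k,p}|$ fails forces (by the Laplacian comparison) contact to occur on $\p P_{T_*}\cap\p V_p$, not in the interior. At that contact point $z$ one applies (V2) for $V_p$ with $\b_k'=\b_k|\n\phi_{T_*}(z)|$, and the explicit form of $\phi_T$ gives $\b_k'\to\b_k$ as $r\to 0$ (hence $\e\to 0$, hence $T_*\to\infty$). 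In short, the barrier \emph{is} the touching ball for $V_p$, and its gradient at the contact point replaces the unavailable infinitesimal growth information about $v_{k,p}$. This circumvents the need to establish the asymptotic $o(|z-x_p|)$ bound for $v_{k,p}$ at a separately-constructed contact point, which is where your argument stalls. If you flesh out your ``growth transfer'' step carefully you will likely be forced into a construction of this type, but as written the proposal is missing this essential ingredient.
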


\begin{proof}
	First, the property (V1) is straightforward: for any ball $B_r(x)\cc V$, we have $B_r(x)\ss V_p$ for all $p$ large, and so as distributions $-\triangle v_{k,p} \rightarrow -\triangle v_{k}$ on $B_r(x)$. It follows that $|-\triangle v_k|\leq C_*$.

	Take any ball $B_{r_0}(y_0)\ss V$ and numbers $\b_k$ as in (V2):
	\[
	|v_{k}(z)| > \b_k e\cdot (z-x)) - |z-x|\w(|z-x|)
	\]
	on $B_{r_0}(x)$, with $\w$ a continuous increasing function with $\w(0)=0$ and $e = \frac{y_0-x}{|y_0-x|}$. Note that we may assume that at least one of the $\b_k$ is nonzero, for otherwise there is nothing to check.  Let $r = s r_0$ and $y = (1-s)x + s y$ for some $s$ very small; then we have that $\{x\} = \p B_{r}(y) \cap \p V$ and
	\[
	|v_{k}(z)| > \b_k (e\cdot (z-x)) - |z-x|\w(|z-x|)
	\]	
	on $B_{r_0}(x)$. By choosing $s$ small enough, we may arrange so that $\w(4r)\leq  \e \min_{\{k:\b_k> 0\}} \b_k$. 
	
	We have that for for all $p$ large enough and $k$ with $\xi_k$ nonzero,
	\begin{align}
	|v_{k,p}(z)| &> |v_{k}(z)| - c_p \nonumber\\
	&> \b_k (e\cdot (z-x)) - |z-x|\w(|z-x|) - c_p \nonumber\\
	&> \b_k (e\cdot (z-x)-2\e r). \label{eq:visci1}
	\end{align}
	on $B_{2r}(x)$, with $c_p\rightarrow 0$. On the other hand, we also have that for $p$ large enough, $d(x,\p V_p) < \e r$.
	
	Define $q =x -4\e e$,  $R_T = \sqrt{r^2 + T^2}$, $x_T = x + (4\e r + T)e$, and, for $T\geq 4$
	\[
	w_T(z) = \begin{cases}
				\frac{(|z-x_T|^{2-n} - R_T^{2-n})^+}{(n-2) R_T^{1-n}} & T<\8\\
				 (e \cdot(z-q))^+  & T= \8
			\end{cases}
	\]
	(and similarly with the logarithm when $n=2$). Let $P_T = \{w_T>0\}$; this is either a large ball or a half-space. Notice that $w_T$ is harmonic on $P_T \sm \{x_t\}$, so in particular on $P_T \cap B_{2r}(x)$. Moreover, $P_T \cap \{z:e\cdot (z-q)=0  \} = B_r(q)\cap \{z:e\cdot (z-q)=0  \}$ for every $T<\8$. Finally, we have that $w_T \rightarrow w_\8$  as $T\rightarrow \8$, locally uniformly, and $\n w_T \rightarrow e$ uniformly on $B_{2r}(x)\cap P_T$.
	
	Set $C_0 = \frac{C_*+1}{\min_{\{k:\b_k> 0\}} \b_k}$, and let $\z_T$ be the solution of
	\[
	\begin{cases}
		-\triangle \z_T = - C_0 & \text{ on } P_T \cap B_{2r}(x)\\
		\z_T = 0 & \text{ on } \p(P_T \cap B_{2r}(x)).
	\end{cases}
	\]
	We have that $0\leq - \z_T(z) \leq C d(z,P_T)$ and $|\n \z_T| \leq C r <\frac{1}{2}$ provided that $r$ is chosen sufficiently small. We extend $\z_T$ by $0$ to the remainder of $\R^n$, and set $\phi_T = w_T + \z_T$. Importantly, observe that $\{\phi_T > 0\} = P_T$, by our estimates on $\z_T$.
	
	Now, observe that from \eqref{eq:visci1}, we have that $\b_k \phi_\8 < |v_{k,p}|$ on $P_\8 \cap B_{2r}(x)$, and so $\b_k \phi_T < |v_{k,p}|$ on $P_T \cap B_{2r}(x)$ for each $T$ large enough. Let $T_*$ be chosen by
	\[
	T_* = \inf\{ T\geq 4: \b_k \phi_T < |v_{k,p}| \text{ on } P_T \cap B_{2r}(x) \forall k \text{ with } \b_k,\xi_k>0 \}.
	\]
	We have shown that $T_*<\8$.  Recall that $d(x,\p V_p) < \e r$: we combine this with the fact that $B_\e(x) \ss P_T$ for all $T$ with $R_T - T > 5\e r $ (so, for all $T$ with $T<\frac{1}{5\e}$), which means that for any such $T$, at some point in $B_\e(x)$ all of the $v_{k,p}=0$ while $\phi_T>0$. In particular, this means that $T_* \geq  \frac{1}{5\e}$, and $T_*\rightarrow \8$ as $\e\rightarrow 0$. We also know that $|v_{k,p}(z)| \geq \b_k \phi_{T_*}(z)$.
	
	Now, there are two possibilities for what occurs at $T_*$: either $|v_{k,p}(z)| = \b_k \phi_{T_*}(z)>0$ for some $z$ and $k$ with $\b_k>0$, or there is a $z\in \p V_p \cap \p (P_{T_*} \cap B_{2r}(x))$. The first case is actually impossible. Indeed, at such a $z$ we would have that
	\[
	 \b_k \triangle \phi_{T_*} \leq C_*,
	\]
	as $\b_k \phi_{T_*}$ touches either $v_{k,p}$ or $-v_{k,p}$ from below at $z$, and this function has Laplacian controlled by $C_*$. But by construction, $\b_k \triangle \phi_{T_*}(z) = \b_k C_0 > C_*$, which is a contradiction. 
	
	It follows that we are in the second case: there is a point $z\in \p V_p \cap \p (P_{T_*} \cap B_{2r}(x))$. First of all, notice that this point must actually be in $\p P_{T_*} \cap B_{2r}(x)$. This is because the sets $P_{T} \cap \p B_{2r}(x)$ are decreasing as $T$ decreases, so if $z$ was in $P_{T_*} \cap \p B_{2r}(x)$, it would also be in $P_{T_\8} \cap \p B_{2r}(x)$, which is impossible by construction.
	
	We may therefore use the smoothness of $P_{T_*}$ to apply the property (V2) of $\{v_{k,p}\}$ to this point $z$, with constants $\b_k' = \b_k |\n \phi_{T_*}(z)|$ if $\b_k$ and $\xi_k$ are nonzero, and $\b_k'=0$ otherwise. This tells us that
	\[
	\sum_{k} (\b_k')^2 \xi_{k,p} \leq \xi_{0,p}(z).
	\]
	Taking the limit as $p\rightarrow \8$, this gives
	\[
	\sum_{k} (\b_k')^2 \xi_{k} \leq \xi_{0}(z) \leq \xi_0(x) + Cr.
	\]
	Moreover, we have that $\b_k'\rightarrow \b_k$ as $T_* \rightarrow \8$. Sending $r\rightarrow 0$ (and hence $\e\rightarrow 0$ and $T_*\rightarrow \8$) gives
	\[
	\sum_{k} \b_k^2 \xi_{k} \leq \xi_0(x),
	\]
	which is the property (V2).
	
	The property (V3) follows in a similar manner.
\end{proof}

\begin{lemma} \label{lem:weissfacts}Let $F, \Wb$ satisfy (A1-4) and (B1-5). Then:
	\begin{enumerate}
		\item For each $x\in Z$,
		\begin{equation}\label{eq:wfc1}
		W(x,0+) = \lim_{r\searrow 0} W(x,r) = \lim_{r\searrow 0} \frac{|B_r(x)\cap \Wb|}{r^n}.
		\end{equation}
		In particular, the limit on the right exits.
		\item $0<c_0 \leq W(x,r)\leq C_0$ for all $x\in Z$ and $r<r_0$.
		\item $|W(x,r) - W(y,r)|\leq C \frac{|x-y|}{r}$ for all $x,y\in Z$ and $r>0$.
	\end{enumerate}
\end{lemma}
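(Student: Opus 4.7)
The plan is to handle the three parts in turn, with the substance concentrated in part (1); parts (2) and (3) then follow from (1) combined with short direct estimates.

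For part (1), I would perform a blow-up argument at any $x\in Z$. Fix a sequence $r_i\searrow 0$, rescale $u_{k,i}(z) := u_k(x+r_i z)/r_i$, and extract a subsequence along which $u_{k,i}\to v_k$ locally uniformly and weakly in $H^1_{\mathrm{loc}}$ (using the Lipschitz bound of Lemma~\ref{lem:lip}) and $f(r_i):=|B_{r_i}(x)\cap \Wb|/r_i^n \to L$ for some $L$. Rescaling $t=r_i\tau$ in the monotonicity formula of Theorem~\ref{thm:lim}(6) and using that $W(x,r)\to W(x,0+)$ at both endpoints forces
\[
\int_a^b \frac{2}{\tau^{n+2}}\int_{\p B_\tau} \sum_{k\in\cK}\xi_k (u_{k,i} - \tau (u_{k,i})_\tau)^2 \, d\tau \to 0,
\]
so by weak-$L^2$ lower semicontinuity the limit $v_k$ satisfies $v_k=\tau(v_k)_\tau$, that is, $v_k$ is $1$-homogeneous. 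Since $-\triangle u_{k,i} = r_i^2 q_k u_{k,i}\to 0$, the function $v_k$ is harmonic on $\{v_k\neq 0\}$, and integration by parts there yields the identity $\int_{B_1}|\n v_k|^2 = \int_{\p B_1} v_k^2$. The Pohozaev-type identity
\[
\int_{B_r(x)} |\n u_k|^2 = q_k\int_{B_r(x)} u_k^2 + \int_{\p B_r(x)} u_k (u_k)_r,
\]
valid at almost every $r$ (the singular part of $\triangle u_k$ on $Z$ being killed by $u_k\equiv 0$ on $Z$), rescales and passes to the limit using strong $L^2(\p B_1)$ convergence of $u_{k,i}$ and weak $L^2$ convergence of $(u_{k,i})_\tau$, giving $\int_{B_1}|\n u_{k,i}|^2\to \int_{\p B_1} v_k^2 = \int_{B_1}|\n v_k|^2$. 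This upgrades weak $H^1(B_1)$ convergence to strong. Then
\[
W(x,r_i) = f(r_i) + \sum_{k\in\cK}\xi_k\Big[\int_{B_1}|\n u_{k,i}|^2 - \int_{\p B_1}(u_{k,i})^2\Big] \to L + 0,
\]
and $W(x,r_i)\to W(x,0+)$ by monotonicity forces $L=W(x,0+)$; since every subsequence yields the same limit, $\lim_{r\searrow 0} f(r) = W(x,0+)$.

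Part (2) is then straightforward: the upper bound follows by dropping the nonnegative surface term and bounding $\sum\xi_k|\n u_k|^2+1\leq C$ via Lemmas~\ref{lem:lip} and~\ref{lem:moreest}(1). The lower bound uses part (1): since $\W_\8\ss \Wb$ and $Z\ss \Wc$, we have $f(r)\geq |B_r(x)\cap \W_\8|/r^n \geq c_*$ by Theorem~\ref{thm:lim}(4), so $W(x,0+)\geq c_*$, and the monotonicity of $W(x,\cdot)+C(\cdot)$ gives $W(x,r)\geq c_*/2$ for $r$ small. For part (3), the volume-term contribution to $|W(x,r)-W(y,r)|$ is bounded by $\frac{C}{r^n}|B_r(x)\triangle B_r(y)|\leq C|x-y|/r$ using $|B_r(x)\triangle B_r(y)|\leq Cr^{n-1}|x-y|$. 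For the surface term, since $u_k$ vanishes at both $x$ and $y$ (as both lie in $Z$), Lipschitz continuity gives $|u_k|\leq Cr$ and $|\n(u_k^2)|\leq Cr$ on $B_{2r}(x)$, whence $|u_k^2(x+r\omega)-u_k^2(y+r\omega)|\leq Cr|x-y|$; integration against $r^{n-1}d\cH^{n-1}(\omega)$ divided by $r^{n+1}$ again yields $C|x-y|/r$ (the case $|x-y|\geq r$ being covered trivially by part (2)).

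The main obstacle will be the strong $H^1$ convergence step in part (1), since we lack the direct nondegeneracy lower bound on the $u_k$ that was available in \cite{KL}. The trick is to use the Pohozaev identity to convert interior Dirichlet energy into boundary data, paired with the Weiss-derived $1$-homogeneity to obtain the matching identity $\int v_k^2 = \int|\n v_k|^2$. A subtle technical point is justifying the Pohozaev identity across $Z$ where $u_k$ is only Lipschitz: the crucial observation is that $u_k\equiv 0$ on $Z$, so any singular part of $\triangle u_k$ supported there is annihilated in the pairing with $u_k$, leaving only the Dirichlet eigenvalue equation.
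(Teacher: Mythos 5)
Your blow-up scheme follows the paper's: you rescale, extract one-homogeneity of the limits $v_k$ from the Weiss inequality (your weak lower semicontinuity argument works because after rescaling the relevant quantity is a weighted bulk $L^2$ norm of $\p_\tau(u_{k,i}/\tau)$), and you observe harmonicity of $v_k$ where it is nonzero. Where you genuinely depart from the paper is the crucial step of upgrading weak $H^1_{\text{loc}}$ convergence of the rescalings to strong $L^2$ convergence of their gradients. The paper does this by noting that $\triangle|v^r_k|\ge -Cr^2$ is a signed Radon measure with locally uniformly bounded mass (thanks to the uniform Lipschitz bound), invoking the compact embedding into $W^{1,p}$, $p<n/(n-1)$, and interpolating against the $L^\infty$ gradient bound; this keeps the argument entirely in the interior. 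You instead propose a Green-type identity $\int_{B_1}|\n u_{k,i}|^2 = q_k r_i^2\int_{B_1}u_{k,i}^2 + \int_{\p B_1} u_{k,i}(u_{k,i})_\tau$ and try to match its limit against $\int_{B_1}|\n v_k|^2 = \int_{\p B_1}v_k^2$. The route is viable, but as written it has a gap: the asserted ``weak $L^2$ convergence of $(u_{k,i})_\tau$'' on $\p B_1$ does \emph{not} follow from weak $H^1(B_2)$ convergence, since there is no trace operator on the gradient. The boundary term can still be passed to the limit, but by a different argument: either (a) split $\p B_1$ into $\{v_k\neq 0\}$, where interior elliptic estimates upgrade the convergence to locally $C^m$ so $(u_{k,i})_\tau\to(v_k)_\tau$ pointwise, and $\{v_k=0\}$, where $u_{k,i}\to 0$ uniformly while $(u_{k,i})_\tau$ is merely $L^\infty$-bounded so $u_{k,i}(u_{k,i})_\tau\to 0 = v_k(v_k)_\tau$ anyway, then apply dominated convergence; or (b) average the identity over $r\in(1,1+\d)$, turning the boundary term into a bulk integral where weak $L^2$ convergence of $\n u_{k,i}$ is available, and contract $\d\to 0$ using monotonicity in $r$ of the Dirichlet energy.

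A smaller item that a complete write-up should state explicitly: to pair $\triangle u_k$ with $u_k$ and to say the singular part is ``annihilated,'' you first need $\triangle u_k$ to be a signed Radon measure, not just a distribution of order one, and you need its singular part carried by the closed set $\p(\Wb\sm Z)$ where $u_k$ vanishes. The first point follows from the Lipschitz bound plus Kato's inequality for $u_k^\pm$ (each satisfies $\triangle u_k^{\pm}\ge -C$ globally); the second from the fact that inside $\Wb\sm Z$ the function $u_k$ satisfies the eigenfunction equation classically and outside the closure it vanishes identically. These are exactly the same structural facts that feed the paper's $BV$-style argument, just deployed differently. Parts (2) and (3) of your proposal coincide with the paper's argument.
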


\begin{proof}
	For the first property, we may take $x=0$ without loss of generality. Take any sequence $r_i\searrow 0$; we will show that along some subsequence of this $r_{i_l}$,
	\[
	\lim_l \frac{1}{r^n_{i_l}} \int_{B_{r_{i_l}}\cap \Wb} (\sum_{k\in \cK} \xi_{k}|\n u_{k}|^2) - \frac{1}{r^{n+1}_{i_l}}\int_{\p B_{r_{i_l}}} \sum_{k\in \cK}\xi_{k}u_{k}^2 d\cH^{n-1} = 0.
	\]
	This implies that
	\[
	W(0,0+) = \lim_{l\rightarrow \8} W(0,r_{i_l}) = \lim_{l\rightarrow \8} \frac{|B_{r_{i_l}}\cap \Wb|}{r_{i_l}^n} = \lim_{r\searrow 0} \frac{|B_r(x)\cap \Wb|}{r^n},
	\]
	from which \eqref{eq:wfc1} follows. We abbreviate $r_{i_l}=r$ below, passing to further subsequences as needed without changing notation.
	
	Define functions
	\[
	v^r_k(x) = \frac{u_k(r x)}{r}.
	\]
	These are uniformly Lipschitz continuous, so we may take a subsequence such that $v_k^{r} \rightarrow v_k$ for some Lipschitz functions $v_k$ uniformly and weakly in $H^1_{\text{loc}}$. We may also check that $-\triangle v_k=0$ and $v_k^{r} \rightarrow v_k$ locally in $C^m$ topology for any $m$ on the set $\{v_k \neq 0\} $: indeed, for at any point $x$ in this set, there is a ball $B_r(x)$ on which $v_k^r$ is nonzero for all $r$, and so satisfies $-\triangle v_k^r = q_k r^2 v_k^r$. From interior estimates this means $v_k^r$ is uniformly bounded in $C^m$, which implies the convergence and that $v_k$ is harmonic.
	
	We also have that $\n v_k^{r} \rightarrow \n v_k$ strongly in $L^2(B_R)$ for every $R$. Indeed, we have that
	\[
	\triangle |v_k^r| \geq - C r^2 \geq -C
	\]
	in the distributional sense on $\R^n$, and so may be represented by a Borel measure. On the other hand,
	\[
	\triangle |v_k^r| (B_R) \leq \int_{\p B_R} |\n v_k^r| \leq CR^{n-1},
	\] 
	so these measures are uniformly bounded in $r$. It follows from the compact embedding (see \cite{P}[Proposition 5.10]) that $|v_k^r|$ converges strongly to $|v_k|$ in $W^{1,p}(B_R)$ for any $p<\frac{n}{n-1}$. As we also have $|\n v_k|$ is uniformly bounded, this gives that $\n v_k^r \rightarrow \n v_k$ strongly in $L^p(B_R)$ for all $p<\8$, so in particular for $p=2$.
	
%	First, on $\{v_k \neq 0\}\cap B_R$, this is immediate from the dominated convergence theorem. After passing to a subsequence, we may find a Radon measure $\s$ so that $|\n v_k|^2 d\cL^n \rightharpoonup \s$ locally in the weak-* sense.	The fact that $\n v_k^r$ is uniformly bounded guarantees that $\s$ is absolutely continuous with respect to Lebesgue measure. From standard properties of Lipschitz functions, we have that $\n v_k = 0$ a.e. on $\{v_k=0\}$. Take any $x$ with $v_k(x)=0$ and
%	\begin{equation}\label{eq:wfi2}
%	\w(x,s) := \max_{B_s(x)} \frac{|v(y)|}{s} \quad \text{ has } \quad  \lim_{s\searrow 0}\w(x,s) = 0.
%	\end{equation}
%	As $- \triangle |v_{k}^r| \leq C r^2$, we may apply the Caccioppoli inequality to $|v_k^r|$ to see that
%	\[
%	\int_{B_s(x)} |\n v_k^r|^2 \leq \frac{C}{s^2} \int_{B_{2s}(x)} (v_k^r)^2 + C s^n r^2.
%	\] 
%	The right-hand side converges to
%	\[
%	\frac{C}{s^2} \int_{B_{2s}(x)} (v_k)^2 \leq Cs^n \max_{B_s(x)} \frac{|v(y)|^2}{s^2} \leq C s^n \w^2(x,s),
%	\]
%	with the last step from \eqref{eq:wfi2}. Thus
%	\[
%	\frac{\s(B_s(x))}{|B_s|} \leq C\w^2(x,s) \rightarrow 0
%	\]
%	as $s\searrow 0$. As this is true at almost every $x\in \{v_k = 0\}$,  the Radon-Nykodym theorem implies that $\s(\{v_k=0\})=0$, which gives that
%	\[
%	0 = \s(B_R \cap \{v_k = 0\}) \geq \limsup_{r\rightarrow 0} \int_{\{v_k=0\}\cap B_R}|\n v_k|^2.
%	\]
%	This implies the strong convergence.
	
	We then have that
	\begin{align}
	\lim_{r\searrow 0}& \frac{1}{r^n} \int_{B_{r}\cap \Wb} (\sum_{k\in \cK} \xi_{k}|\n u_{k}|^2) - \frac{1}{r^{n+1}}\int_{\p B_{r}} \sum_{k\in \cK} \xi_{k}u_{k}^2 d\cH^{n-1}  \nonumber\\
	&=\lim_{r\searrow 0} \int_{B_1} (\sum_{k\in \cK} \xi_{k}|\n v_{k}^{r}|^2) - \int_{\p B_1} \sum_{k\in \cK} \xi_{k}(v_{k}^{r})^2 d\cH^{n-1}\nonumber\\
	&= \int_{B_1} (\sum_{k\in \cK} \xi_{k}|\n v_{k}|^2) - \int_{\p B_1} \sum_{k\in \cK} \xi_{k}v_{k}^2 d\cH^{n-1}.\label{eq:wfi1}
	\end{align}
	From Theorem \ref{thm:lim}(6), we have that
	\begin{align*}
	 0 & = \lim W(x,b r) - W(x,a r) \\
	 &\geq \lim \int_a^b \frac{2}{t^{n+2}}\int_{\p B_t(x)} \sum_{k\in \cK} \xi_{k}(v_{k}^{r} - t (v_{k}^{r})_r)^2 \\
	 &= \int_a^b \frac{2}{t^{n+2}}\int_{\p B_t(x)} \sum_{k\in \cK} \xi_{k}(v_{k} - t (v_{k})_r)^2.
	\end{align*}
	This, for all $a<b$, implies that the $v_k$ are $1$-homogeneous. Together with the fact that the $v_k$ are harmonic when nonzero, integrating by parts gives that the last line in \eqref{eq:wfi1} is $0$. This implies (1).

	For (2), the lower bound is a direct consequence of (1) and Theorem \ref{thm:lim}(4), while the upper bound is trivial from the Lipschitz bound on $u_k$. For (3), observe that we may, without loss of generality, assume that $|x-y|\leq \e r$ for a small $\e$, as otherwise this follows from the upper bound in (2). Then using that $|B_r(x)\triangle B_r(y)| \leq C(n) |x-y| r^{n-1}$,
	\begin{align*}
	|\frac{1}{r^n}\int_{B_{r}(x)\cap \Wb} (1+\sum_{k\in \cK} \xi_{k}|\n u_{k}|^2) - \frac{1}{r^n}\int_{B_{r}(y)\cap \Wb} (1+\sum_{k\in \cK} \xi_{k}|\n u_{k}|^2)| 
	&\leq C  \frac{|B_r(x)\triangle B_r(y)|}{r^n} \\
	&\leq C \frac{|x-y|}{r}.
	\end{align*}
	For the other term,
	\begin{align*}
	|\frac{1}{r^{n+1}}&\int_{\p B_{r}(x)\cap \Wb} \sum_{k\in \cK} \xi_{k} u_{k}^2 - \frac{1}{r^{n+1}}\int_{\p B_{r}(y)\cap \Wb} \sum_{k\in \cK} \xi_{k} u_{k}^2| \\
	&\leq C r  |\frac{1}{r^{n+1}}\int_{\p B_{r}(x)\cap \Wb} \sum_{k\in \cK} \xi_{k} |u_{k}(z) - u_k(z+(y-x))|d\cH^{n-1}(z) | \\
	&\leq C r \frac{|x-y| r^{n-1}}{r^{n+1}}\\
	&\leq C \frac{|x-y|}{r}.
	\end{align*}	
	Combining gives (3).
\end{proof}

\begin{lemma}\label{lem:viscsol} Let $F, \Wb$ satisfy (A1-4) and (B1-5) and $\theta < 1$. Let $x\in Z$ and $r\in (0,r_0(\theta))$ such that $W(x,r) \leq \theta |B_1|$. Then there is a $\g = \g(\theta)$ such that $\Wb$ is relatively open in $B_{\g r}(x)$, $Z\cap B_{\g r}(x) = \p \Wb \cap B_{\g r}(x)$ and $\{u_k\}_{k\in \cK}$ and $B_{\g r}(x)\cap \Wb$ constitute a viscosity solution with parameters $\{\xi_k\}$ and $C$.
\end{lemma}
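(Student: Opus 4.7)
The plan is to reduce the statement to Lemma \ref{lem:visc} applied with $V_p = \W_p$, $V = \Wb \cap B_{\g r}(x)$, and $U = B_{\g r}(x)$. The bulk of the work is to identify $Z \cap B_{\g r}(x) = \p \Wb \cap B_{\g r}(x)$ and to show that $V$ is open; both will follow once the hypothesis $W(x,r) \leq \theta|B_1|$ with $\theta < 1$ is translated, via a density argument, into the fact that every point of $Z \cap B_{\g r}(x)$ has Lebesgue density strictly less than $1$ for $\Wb$ --- i.e., lies away from the cusp set $Z_C$.

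Concretely, by the almost-monotonicity of $W$ (Theorem \ref{thm:lim}(6)), $W(x,s) \leq W(x,r) + C(r-s) \leq \theta|B_1| + Cr$ for all $s \leq r$. At scale $s = r/2$, Lemma \ref{lem:weissfacts}(3) gives $W(y,r/2) \leq \theta|B_1| + Cr + 2C\g$ for every $y \in Z \cap B_{\g r}(x)$, and another application of almost-monotonicity yields $W(y,0+) \leq \theta|B_1| + Cr + 2C\g$. Choosing $\g=\g(\theta)$ and then $r_0=r_0(\theta)$ small, this is bounded by $\theta'|B_1|$ with $\theta'<1$, so by Lemma \ref{lem:weissfacts}(1) the Lebesgue density of $\Wb$ at $y$ is at most $\theta'<1$. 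Since every point of a quasiopen set has density $1$ for itself (as used in the proof of Theorem \ref{thm:lim}), this forces $y \in \overline{\Wb} \sm \Wb \ss \p \Wb$; combined with $\p \Wb \ss Z$ from Theorem \ref{thm:lim}(3), we obtain $Z \cap B_{\g r}(x) = \p \Wb \cap B_{\g r}(x)$.

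For the openness of $V$, take $y \in V$; the identification above gives $y \notin Z$, so some ball $B_s(y) \ss B_{\g r}(x)$ is disjoint from $\p \W_p$ for all large $p$. Since $\W_p$ is open (Proposition \ref{prop:fp2}(1)), $\mathbf{1}_{\W_p} \to \mathbf{1}_{\Wc}$ in $L^1$ (Theorem \ref{thm:lim}), and $y$ has density $1$ for $\Wc$, connectedness of $B_s(y)$ forces $B_s(y) \ss \W_p$ eventually, and hence $|B_s(y)\sm \Wb| \leq |B_s(y)\sm \Wc| + |\Wc \sm \Wb| = 0$ (using Corollary \ref{cor:smallbdry} for the last term). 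The set $\Wb \cup B_s(y)$ is quasiopen, differs from $\Wb$ only in a null set, and is an $F$-minimizer by the monotonicity of eigenvalues and (A2); applying (B3) then gives $\text{cap}(B_s(y)\sm \Wb) = 0$, which by the standing choice of representative means $B_s(y) \ss \Wb$. Thus $V$ is open with $\p V \cap B_{\g r}(x) = Z \cap B_{\g r}(x)$ and $\overline{V}\cap B_{\g r}(x) = V \cup (Z\cap B_{\g r}(x))$, so the global Hausdorff convergences of Theorem \ref{thm:lim}(1,3) restrict to the convergences $\overline{\W}_p \to \overline{V}$ and $\p \W_p \to \p V$ locally on $U$ required by Lemma \ref{lem:visc}. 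Together with the uniform Lipschitz control on $\xi_{k,p} u_{k,p}$ from Lemma \ref{lem:lip} (trivial for $k \notin \cK$ since $\xi_{k,p} \to 0$ and the $u_{k,p}$ are bounded), that lemma applies and yields the viscosity solution property for $\{u_k\}_{k \in \cK}$ on $V$. The main obstacle is the density-based identification of $Z$ with $\p \Wb$ locally: it is exactly where $\theta < 1$ is essential, as the cusp set $Z_C$ corresponds to $W(\cdot,0+) = |B_1|$, and without the density gap one could not rule out Hausdorff limit points of $\p \W_p$ in the interior of $\Wb$, where Lemma \ref{lem:visc} cannot be applied.
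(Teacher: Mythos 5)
Your proof is correct and follows essentially the same approach as the paper: use the hypothesis $W(x,r)\leq\theta|B_1|$ together with Lemma \ref{lem:weissfacts}(3) and the almost-monotonicity of $W$ to force $W(y,0+)<|B_1|$ on $Z\cap B_{\g r}(x)$, then read off from Lemma \ref{lem:weissfacts}(1) that the Lebesgue density of $\Wb$ is strictly less than $1$ there, conclude $Z\cap B_{\g r}\cap\Wb=\emptyset$, and finally invoke Lemma \ref{lem:visc}. The one place where you take a noticeably longer route is the openness of $\Wb\cap B_{\g r}(x)$: you go through the Hausdorff convergence of $\p\W_p$, connectedness of small balls, $L^1$ convergence of $\mathbf{1}_{\W_p}$, Corollary \ref{cor:smallbdry}, and (B3). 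This is correct, but it is a detour --- once you know $\Wb\cap B_{\g r}\cap Z=\emptyset$, the inclusion $\p\Wb\ss Z$ immediately gives $\Wb\cap B_{\g r}\cap\p\Wb=\emptyset$, hence $\Wb\cap B_{\g r}\ss\overline{\Wb}\sm\p\Wb=\operatorname{Int}(\Wb)$, and openness follows in one line. The rest of your argument (identification $Z\cap B_{\g r}=\p\Wb\cap B_{\g r}$ via $Z\ss\Wc$ and $\Wc\sm\Wb\ss\p\Wb$, and the verification of the hypotheses of Lemma \ref{lem:visc}) matches the paper.
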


\begin{proof} Without loss of generality, take $x=0$. Choose $\g$ and $r_0$ so that the assumptions and Lemma \ref{lem:weissfacts}(3) imply that $W(y,r) < \frac{\theta+1}{2}|B_1|$ for  $y\in B_{\g r}$. Up to taking $r_0$ smaller and using Theorem \ref{thm:lim}(6), this implies that $W(y,0+)<|B_1|$ for all such $y$. Now take any point $y$ in $\Wb \cap B_{\g r} \cap Z$: we have (just from the fact that $\Wb$ is quasiopen) that
	\[
	\lim_{s\searrow 0} \frac{|B_{s}(y)\cap \Wb|}{|B_s|} = 1.
	\]
	This means that $W(y,0+) \geq |B_1|$, which is a contradiction. Hence $\Wb \cap B_{\g r} \cap Z$ is empty. In particular $\Wb \cap \p \Wb \ss \Wb \cap Z$ is also empty, so $\Wb$ is relatively open. On the other hand, $Z \cap \Wb$ is empty, so $Z = \p \Wb$ on $B_{\g r}$.
	
	The remaining conclusion follows from applying Lemma \ref{lem:visc}.
\end{proof}

\section{Regularity of the Boundary}\label{sec:reg}

In order to discuss further regularity properties of the free boundary $\p \Wb$, or the potentially larger set $Z$ introduced in Theorem \ref{thm:lim}, it will help to identify several pieces of it which exhibit somewhat different behaviors. We will use the following notation:
\[
Z = \p^* \Wb \cup Z_{AC} \cup Z_{N} \cup Z_{C}.
\]
The definitions of these are as follows. The first of these, $\p^* \Wb$ is the reduced boundary of $\Wb$: these are points of Lebesgue density $\frac{1}{2}$ for $\Wb$ whose blow-ups converge to half-spaces, in the sense that
\[
\frac{\Wb - x}{r} \rightarrow \{z:z\cdot \nu > 0\}
\] 
in $L^1_{\text{loc}}$, for some unit vector $\nu$. The set $Z_{AC}$ is defined as
\[
Z_{AC} = \{x\in Z\sm \p^*\Wb: \lim_{r\searrow 0}\frac{|B_r(x)\cap \Wb|}{|B_r|} < 1 \}.
\]
The limit exists at every point by Lemma \ref{lem:weissfacts}(1). Note that by Theorem \ref{thm:lim}, we have that $\lim_{r\searrow 0}\frac{|B_r(x)\cap \Wb|}{|B_r|}\geq c_*$, so this coincides with the essential boundary of $\Wb$ minus the reduced boundary. The set $Z_N$ is the part of $Z$ contained in the interior of $\Wb$. Then $Z_C$ accounts for the remainder of $Z$.

We will show that $\p^*\Wb$ is relatively open in $Z$ and locally given by analytic hypersurfaces. A theorem of Federer guarantees that $Z_{AC}$ is $\cH^{n-1}$-negligible; we will improve this to say that it has a Hausdorff dimension of at most $n-3$, and its blow-ups are given by stationary Alt-Caffarelli cones (hence the name). We do not study $Z_N$ here, although it is somewhat susceptible to free boundary arguments; Remark \ref{rem:smallermin} suggests it may be made up of the mutual nodal set for several eigenfunctions of $\Wb$, in which case it would be contained in a union of analytic manifolds, except for a singular set of Hausdorff dimension $n-2$. See \cite{HL} for a description of such nodal sets. 

As for the remainder of the boundary, $Z_C$, we have very little to say about it except to point out that it is comprised of points at which $\Wb$ has Lebesgue density $1$. We suspect that at every point in this set, $\p \Wb$ may be described as follows: there is a hyperplane $H$ and two $C^1$ graphs $\g_1\leq\g_2$ over $H$ (which coincide at the point $x$) such that locally, $\Wb^c=\overline{\{\g_1<\g_2\}}$ is the cusp-like region between them. Note that if such points exist, depending on the rate of opening of the cusp, the set $\{(x',\g_1(x')): x' \in \p \{z':\g_1(z')<\g_2(z')\}\}$ (this is the set $Z_C$) may not be regular for the Dirichlet problem. This would mean that our choice of representative for $\Wb$ may be genuinely quasiopen, in that it would include some of $Z_C$.

The first step to proving this would be to show the lower bound
\begin{equation}\label{eq:lb}
\max_{k\in\cK, y\in B_r(x)} |u_k(y)|\geq c r
\end{equation}
for every $r$ small enough and every $x\in \p \Wb$. This, however, appears to be outside the scope of our technique, and we leave it as an open question.

\subsection{The Reduced Boundary}

We begin with a lemma that shows that at flat points in $\Wb$ (points where $\p \Wb$ is close to a hyperplane), the eigenfunctions $u_k$ admit a lower bound \eqref{eq:lb}.

\begin{lemma}\label{lem:wklb} Let $\Wb,F$ be as in Theorem \ref{thm:lim}, and $\theta<1$. Then there are constants $c_0,\e,$ and $r_0$ depending only on $F,\Wb,\theta,N$, and $n$, such that provided $0\in \p \Wb$, $W(0,r)<\theta <1$, and there is a unit vector $\nu$ such that
	\begin{equation}\label{eq:wklbh1}
	|\{y\in B_r:y\cdot \nu > 0\}\cap \Wb| \leq \e |B_r|,
	\end{equation}
for an $r<r_0$, then
\[
\max_{k\in \cK,y\in B_r} |u_k(y)|\geq c_0 r.
\]
\end{lemma}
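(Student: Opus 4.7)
The argument will be by contradiction, combining a sliding-ball construction and a harmonic barrier comparison with the viscosity solution property (V3), in the spirit of Alt--Caffarelli nondegeneracy estimates. Assume $\max_{k \in \cK,\ y \in B_r}|u_k(y)| < c_0 r$ for a constant $c_0$ to be chosen small. After rescaling by $v_k(y) := u_k(ry)/r$ on $B_1$, one has $\|v_k\|_{L^\infty(B_1)} < c_0$, the $v_k$ satisfy $-\Delta v_k = q_k r^2 v_k$ on the rescaled set $V := B_1 \cap r^{-1}\Wb$, and by Lemma \ref{lem:viscsol} (for $r < r_0(\theta)$), there is a $\g = \g(\theta) > 0$ such that $\{v_k\}_{k\in\cK}$ together with $V \cap B_\g$ forms a viscosity solution on $B_\g$ with parameters $\{\xi_k\}$.

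The first step is to locate a fixed-size ball inside $B_\g \setminus V$ on the flat side. Let $H := \{y \in B_{\g/2} : y \cdot \nu > \g/4\}$, which has $|H| \geq c(n, \g)$, while by the flatness hypothesis $|V \cap H| \leq C_n \e$. The uniform interior density bound from Theorem \ref{thm:lim}(4) gives $|B_\rho(x)\cap V|\geq c_* \rho^n$ for every $x\in \overline V$ and $\rho \leq r_*/r$. A Besicovitch-type covering of $V \cap (H + B_\rho)$ by such balls then produces $|(V + B_\rho) \cap H| \leq C(n, c_*) \e$, so fixing $\rho := \g/20$ and taking $\e$ sufficiently small (depending only on $n, c_*, \g$) yields a point $y_0 \in H$ with $B_\rho(y_0) \cap V = \emptyset$. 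Define $y_* := y_0 - t_* \nu$ with $t_* := \inf\{t \geq 0 : \overline{B_\rho(y_0 - t\nu)} \cap \overline V \neq \emptyset\}$. Since $0 \in \partial V$, $t_*$ is finite, and we obtain a contact point $x_* \in \partial B_\rho(y_*) \cap \partial V \cap B_{\g/2}$ with $B_{2\rho}(y_*) \subset B_\g$ (as $\e$ can be taken much smaller than $\rho$).

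Next set $\hat d := (y_* - x_*)/\rho$, and on the annulus $A := B_{2\rho}(y_*) \setminus \overline{B_\rho(y_*)}$ define the harmonic barrier
\[
\phi(z) := c_0 \cdot \frac{|z - y_*|^{2-n} - \rho^{2-n}}{(2\rho)^{2-n} - \rho^{2-n}}
\]
(with the usual logarithmic modification when $n = 2$). Then $\phi \equiv 0$ on $\partial B_\rho(y_*)$, $\phi \equiv c_0$ on $\partial B_{2\rho}(y_*)$, and $\nabla \phi(x_*) = -(C_1 c_0/\rho) \hat d$ for an explicit constant $C_1 = C_1(n) > 0$. Since $v_k$ is Lipschitz and vanishes on $\overline{V^c} \supset \overline{B_\rho(y_*)}$, we have $v_k \equiv 0$ on $\partial B_\rho(y_*)$, and $|v_k| \leq c_0 = \phi$ on $\partial B_{2\rho}(y_*) \subset B_\g$. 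Applying the maximum principle to $v_k^{\pm}$ on $A$ (whose distributional Laplacians satisfy $|\Delta v_k^\pm| \leq q_k r^2 c_0$), with the eigenvalue correction absorbed into the solution $\chi$ of $-\Delta \chi = q_k r^2 c_0$ on $A$ with zero boundary data (which has $|\nabla \chi(x_*)| = O(q_k r^2 c_0)$), yields
\[
|v_k(z)| \leq \phi(z) + \chi(z) \leq \left(\frac{C_1}{\rho} + O(r^2)\right) c_0 \left(\hat d \cdot (z - x_*)\right)^{-} + o(|z - x_*|).
\]
Condition (V3) applied at $x_*$ with these values of $\b_k$, together with $\xi_0(x_*) \geq 1/2$ and the uniform bound $\sum_{k\in\cK} \xi_k \leq C$ from Lemma \ref{lem:moreest}(1), forces
\[
\tfrac{1}{2} \leq C' c_0^2 \qquad \text{for some } C' = C'(n, \theta, F, \Wb),
\]
contradicting the assumption provided $c_0$ was chosen smaller than $(2C')^{-1/2}$.

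The main technical subtlety is the eigenvalue correction $q_k r^2 v_k$: a priori it creates a first-order error in the barrier comparison, which would spoil the $o(|z - x_*|)$ form demanded by (V3). The key point is that after rescaling the effective eigenvalue is $O(r^2)$, so this correction becomes negligible as $r \to 0$ (this is why $r_0$ must depend on $c_0$). A secondary point requiring care is verifying that the contact point $x_*$ lies strictly inside $B_{\g/2}$ and that $B_{2\rho}(y_*) \subset B_\g$, so that the viscosity condition is applicable there; this is ensured by choosing $\rho$ as a small fraction of $\g$ and $\e$ sufficiently small relative to $\rho$.
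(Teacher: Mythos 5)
Your proposal follows essentially the same strategy as the paper's proof: locate a ball on the flat side disjoint from $\Wb$ (using the density lower bound of Theorem \ref{thm:lim}(4) together with the flatness hypothesis), slide it to a contact point, construct a barrier on an annulus, and invoke the viscosity condition (V3) at the contact point to bound $c_0$ from below. The barrier decomposition into a harmonic piece $\phi$ plus an $O(r^2)$ correction $\chi$ is a cosmetic variant of the paper's single Dirichlet problem with right-hand side $\l_N(\Wb)$; both give a slope $\sim c_0/\r$ at the contact point.

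There is, however, one unjustified step: the claim that $t_*$ is finite ``since $0 \in \p V$.'' Your covering argument produces \emph{some} $y_0 \in H$ with $B_\r(y_0) \cap V = \emptyset$, but the ray $\{y_0 - t\nu : t \geq 0\}$ need not pass within $\r$ of the origin --- indeed $|\pi(y_0)|$ can be as large as $\g/2 \gg \r$ --- and away from the origin you have no control that forces the $\r$-tube around this ray to meet $\bar V$ before it exits $B_1$. As written, the sliding ball could slip past $\bar V$ entirely, so the contact point $x_*$ need not exist. The fix is immediate and also simplifies the argument: the density/flatness estimate you invoke actually shows $B_\r(y) \cap V = \emptyset$ for \emph{every} $y\in H$ (no Besicovitch covering is needed --- a single application of the density bound at a hypothetical $x \in V \cap B_\r(y)$ and the inclusion $B_{2\r}(y) \subset \{z\cdot\nu>0\}\cap B_1$ gives $c_* \r^n \leq \e|B_1|$, a contradiction for small $\e$). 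You may therefore take $y_0 = \tfrac{3\g}{8}\nu \in H$ and slide along the $\nu$-axis through the origin, so that the closed ball touches $\bar V$ at or before $t = \tfrac{3\g}{8} - \r$, guaranteeing $t_* < \infty$. This is precisely how the paper sets up the slide, with the family $B_{\g r/8}(t\g r\nu)$.

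The remainder of the argument --- the maximum principle comparison on the annulus, the linear expansion of the barrier at $x_*$, the uniform bound $\sum_{k\in\cK}\xi_k \leq C$ from Lemma \ref{lem:moreest}(1), and the final choice of $c_0$ and $r_0$ --- is correct and matches the paper's reasoning, including your observation that the $O(q_k r^2)$ eigenvalue correction is harmless once $r_0$ is chosen small in terms of the (already fixed) geometric constants.
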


\begin{proof}
	First, select a $\g = \g(\theta)$ such that $\{u_k\}_{k\in \cK}$ constitute a viscosity solution on $B_{\g r}$ with parameters $\{\xi_k\}$ and $C$, using Lemma \ref{lem:viscsol}.
	
	Consider any point $x\in \p \Wb \cap B_{\g r}$. If $x\cdot \nu \geq \eta \g r$, then we have
	\[
	c\eta^n \g^n |B_{r}| \leq |B_{\eta \g r}(x) \cap \Wb|\leq \e |B_r|, 
	\]
	where the lower bound is from Theorem \ref{thm:lim}(4) (using here that $r_0$ is sufficiently small), while the upper bound is by assumption \eqref{eq:wklbh1}. This implies that $\eta \leq C \e^{\frac{1}{n}}$. We may therefore assume that $\eta<\frac{\g}{2}$, and that $B_(\g r/8)(\frac{3 \g r}{4} \nu) \cap \Wb = \emptyset$. If we consider the family of balls
	\[
	B_{\frac{\g r}{8}}(t \g r \nu),
	\]
	there must be a maximal $t\in [\frac{1}{8},\frac{3}{4})$ such that the the ball's intersection with $\p \Wb$ is empty; denote the corresponding ball by $B_*$, the ball with the same center but double the radius by $2B_*$, and take $z\in \p B_*\cap \p \Wb$. 
	
	Assume for contradiction that the conclusion fails, and so
	\[
	\max_{k\in \cK,y\in B_r} |u_k(y)|\geq c_0 r.
	\]
	Now select a function $\z$ which solves the following Dirichlet problem:
	\[
	\begin{cases}
	-\triangle \z =  \l_N(\Wb) &\text{ on } 2B_*\sm B_*\\
	\z = 0 & \text{ on } \p B_*\\
	\z = 1 & \text{ on } \p 2B_*.
	\end{cases}
	\]
	For $r$ small enough, this function is positive on $2B_*\sm B_*$ and has
	\[
	|\n \z | \leq \frac{c(n)}{r}
	\]
	on $\p B_*$. Now using the maximum principle, we have that $|u_k|\leq c_0 r \z$ on $2B_*\sm B_*$, for every $k\in \cK$. Applying the viscosity property (V3) with $\b_k = c_0 r |\n \z|$ gives that
	\[
	\sum_{k\in \cK} \xi_k c_0^2 c^2(n) \geq 1. 
	\]
	Provided $c_0$ is small enough, this gives a contradiction.
\end{proof}

\begin{theorem}\label{thm:reduced} The set $\p^* \Wb$ is relatively open in $Z$, and locally coincides with an analytic hypersurface.	
\end{theorem}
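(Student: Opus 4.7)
The plan is to localize the argument to a neighborhood of a reduced boundary point, reduce to a situation where both the upper Lipschitz bound and a non-degeneracy lower bound are available, and then invoke the free boundary regularity machinery developed in \cite{KL}.

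Fix $x_0 \in \p^*\Wb$. By definition, the blow-ups of $\Wb$ at $x_0$ converge in $L^1_{\text{loc}}$ to a half-space; in particular $\lim_{r\searrow 0}|B_r(x_0)\cap\Wb|/|B_r|=1/2$, and for every $\eta>0$ one may find $r$ small and a unit vector $\nu$ such that condition \eqref{eq:wklbh1} of Lemma \ref{lem:wklb} holds. By Lemma \ref{lem:weissfacts}(1) this also gives $W(x_0,0+)=|B_1|/2$, so by the Lipschitz estimate Lemma \ref{lem:weissfacts}(3) together with the monotonicity in Theorem \ref{thm:lim}(6), there is a small $r_1>0$ and a relative neighborhood $U\cap Z$ of $x_0$ in $Z$ on which $W(x,r_1) < \theta|B_1|$ for $\theta = 3/4$. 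Lemma \ref{lem:viscsol} then gives that $\{u_k\}_{k\in\cK}$ together with $\Wb\cap B_{\g r_1}(x)$ forms a viscosity solution with parameters $\{\xi_k\}$ for each $x\in U\cap Z$.

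At a still smaller scale $r_2\leq r_1$, the reduced boundary hypothesis makes the flatness assumption of Lemma \ref{lem:wklb} hold at $x_0$, so $\max_{k\in\cK,B_{r_2}(x_0)}|u_k|\geq c_0 r_2$. Combined with the uniform Lipschitz bound Lemma \ref{lem:lip}, the ambient density estimate Theorem \ref{thm:lim}(4), and the viscosity solution structure, this puts us in exactly the local setting treated in the nondegenerate companion paper \cite{KL}. The main step is then to port the improvement-of-flatness argument of \cite{KL}[Section 7]: starting from the flat scale, at each subsequent scale either flatness improves (in which case the argument iterates and produces a $C^{1,\alpha}$ normal) or the Weiss energy drops by a definite amount, which cannot happen indefinitely by the lower bound in Lemma \ref{lem:weissfacts}(2). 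This yields that in a neighborhood of $x_0$, $\p\Wb$ coincides with the $C^{1,\alpha}$ graph of some function, and in particular every point of $\p\Wb$ in that neighborhood is a reduced boundary point; hence $\p^*\Wb$ is relatively open in $Z$.

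To upgrade from $C^{1,\alpha}$ to analytic, once the boundary is a $C^{1,\alpha}$ graph each $u_k$ ($k\in\cK$) is $C^{1,\alpha}$ up to $\p\Wb$ by Schauder theory, and the first variation identity $\sum_{k\in\cK}\xi_k (u_k)_\nu^2 = 1$ on $\p^*\Wb$ (which is inherited from Proposition \ref{prop:fp2}(7) via the viscosity passage to the limit, and which will be made rigorous in the next subsection) gives an overdetermined Bernoulli-type nonlinear boundary condition. Applying a partial hodograph/Legendre transform to flatten $\p\Wb$ converts the system $-\triangle u_k = q_k u_k$ in $\Wb$ with Dirichlet data and the above nonlinear condition into a nonlinear elliptic system with analytic coefficients for the transformed variables. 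One then bootstraps: the transformed system satisfies the hypotheses of the Kinderlehrer--Nirenberg--Spruck theorem (smoothness from $C^{1,\alpha}$) and of Morrey's analyticity theorem for nonlinear elliptic systems with analytic data, yielding analyticity of the boundary.

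The main obstacle is Step 2, the improvement-of-flatness; everything else is either a rewriting of results already established in earlier sections of this paper or a standard application of free boundary bootstrap. The subtlety is that the lower bound $\max|u_k|\geq c_0 r$ from Lemma \ref{lem:wklb} is only available once we are already flat, so it must be used simultaneously with, rather than as a hypothesis for, the flatness iteration; this is precisely the structure present in \cite{KL} and can be adapted with only notational changes now that Lemma \ref{lem:viscsol} and Lemma \ref{lem:wklb} are in hand.
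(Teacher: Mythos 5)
Your overall strategy matches the paper's: use the flatness of $\Wb$ at a reduced boundary point to trigger Lemma~\ref{lem:wklb} and Lemma~\ref{lem:viscsol}, then hand things off to the regularity machinery of~\cite{KL}. However, there is a genuine gap in the intermediate step. The theorem you want to invoke from~\cite{KL} (the paper cites \cite{KL}[Theorem 7.2] explicitly) takes as its hypothesis an $\e$-flatness condition on the \emph{eigenfunctions}, namely that $\max_{B_r,k}|u_k - \a_k(x\cdot\nu)_-|\leq \e r$ for some coefficients $\a_k$, not merely the lower bound $\max_k |u_k|\geq c_0 r$ together with domain-side density and viscosity properties. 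Those latter facts alone do not place you ``in exactly the local setting treated in the nondegenerate companion paper''; you still have to derive the $\e$-flatness of the functions. The paper does this via a blow-up: it forms $v_k(x)=\lim_{r\to 0} u_k(rx)/r$, uses the Weiss monotonicity of Theorem~\ref{thm:lim}(6) to show each $v_k$ is $1$-homogeneous, harmonic where nonzero, and vanishes on the half-space $\{x\cdot\nu\geq 0\}$ (by the reduced boundary hypothesis), which forces $v_k = \a_k(x\cdot\nu)_-$; the lower bound from Lemma~\ref{lem:wklb} guarantees at least one $\a_k\neq 0$, and Lemma~\ref{lem:visc} applied to the blow-ups gives $\sum_k \xi_k\a_k^2=1$. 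Uniform convergence of the blow-ups then yields the needed $\e$-flatness at some small scale. This blow-up step is absent from your proposal.

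A secondary issue is your description of the internal mechanism of \cite{KL}[Section 7]: the dichotomy ``flatness improves or the Weiss energy drops by a definite amount, which cannot happen indefinitely by the lower bound in Lemma~\ref{lem:weissfacts}(2)'' is not the structure there. The improvement-of-flatness argument iterates geometric decay of a flatness modulus once the initial $\e$-flatness is in hand; the Weiss energy enters separately (for the singular set, and for establishing homogeneity of blow-ups), not as a quantity that is consumed by a definite amount at each scale. You are right, though, that the lower bound from Lemma~\ref{lem:wklb} is only available at the flat initial scale, and the paper explicitly points out that \cite{KL}[Theorem 7.2] only needs it at that initial scale --- that observation is the correct resolution of the subtlety you raise at the end. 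The hodograph/analyticity discussion you add at the end is plausible but unnecessary for the comparison, since \cite{KL}[Theorem 7.2] already delivers analyticity once the $\e$-flatness and viscosity structure are supplied.
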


\begin{proof}
	Let us assume that $0\in \p^* \Wb$. Then there is a unit vector $\nu$ such that
	\[
	\frac{|\{y\in B_r:y\cdot \nu \leq 0\}\triangle \Wb|}{|B_r|}\rightarrow 0 
	\]
	as $r\rightarrow 0$. Moreover, from Lemma \ref{lem:weissfacts}(1), we have $W(0,r)\rightarrow \frac{|B_1|}{2}$ as $\r \searrow 0$. Using Lemma \ref{lem:wklb}, we immediately have
	\begin{equation}\label{eq:reducedi1}
	\max_{y\in B_r,k\in \cK} |u_k(y)| \geq c_0 r
	\end{equation}
	for all $r$ small enough, and $\{u_k\}$ constitute a viscosity solution with parameters $\{\xi_k\}$ and $C$ from Lemma \ref{lem:viscsol}.
	
	Consider the blow-up limits (the convergence is local uniform)
	\[
	v_k(x) = \lim_{r\rightarrow 0} \frac{u(r x)}{r}
	\]
	for $k\in \cK$. Arguing as in the proof of Corollary \ref{cor:ld}, we have that the $v_k$ are Lipschitz continuous, and $v_k =0$ on the half-space $\{x\cdot \nu \geq 0\}$. Using the Weiss monotonicity formula of Theorem \ref{thm:lim}(6), the $v_k$ are $1$-homogeneous. Moreover, each of the $v_k$ is harmonic on the set $\{v_k\neq 0\}$. Finally, there must be at least one $k$ for which $v_k$ is not identically $0$, from the lower bound \eqref{eq:reducedi1}, and in fact
	\[
	\max_{B_1, k\in \cK} |v_k| \geq c_0.
	\]
	
	It follows that for each $k\in \cK$, either $v_k$ is identically $0$, or $v_k$ is of the form
	\[
	v_k(x) = \a_k (x\cdot \nu)_-
	\]
	for some constant $\a_k$, with at least one $\a_k$ nonzero (these are simply the only 1-homogeneous harmonic functions supported on a half-space; see \cite{AC}). From this, we see that $\p \Wb/r \rightarrow \{x\cdot \nu =0\}$ in the Hausdorff topology. Applying \ref{lem:visc}, it follows that $\{v_k\}$ and $\{x:x\cdot \nu <0\}$ are a viscosity solution with parameters $\xi_k$; this easily implies that
	\[
	\sum_{k=1}^N \a_k^2 \xi_k = 1.
	\]
	
	From the uniform convergence, for every $\e>0$, there is a $r>0$ such that
	\[
	\max_{B_r,k}|u_k - \a_k (x\cdot \nu)_-|\leq \e r.
	\] 
	Now applying \cite{KL}[Theorem 7.2] we obtain the conclusion. Note that in that theorem, the lower bound hypothesis is not actually needed at every scale, just at the initial scale. 
\end{proof}

\subsection{The Essential Boundary}

\begin{definition} We say that a continuous function $v\geq 0$ is \emph{weakly stationary for the Alt-Caffarelli problem} if
	\begin{enumerate}[({W}1)]
		\item  It is a viscosity solution on $\R^n$ with $N=1$, the set $\W=\{v>0\}$, and parameters $\xi_0=\xi_1 = 1$.
		\item  The function $v$ is harmonic on $\{v>0\}$ (and this set is nonempty).
		\item 	\[
				0< \liminf_{r\searrow 0} \frac{|B_r(x)\cap \{v>0\}|}{|B_r|}\leq  \limsup_{r\searrow 0} \frac{|B_r(x)\cap \{v>0\}|}{|B_r|} <1
				\]
				at every point $x\in \p \{v>0\}$
		\item $v$ is Lipschitz continuous.
		\item the Weiss energy for $v$,
		\[
		W_{AC}(v;x,r) = \frac{1}{r^n}\int_{ B_r(x)} |\n v|^2 + 1_{\{v>0\}} - \frac{1}{r^{n+1}} \int_{ \p B_r(x)} v^2
		\] 
		satisfies
		\begin{equation}\label{eq:wkacd1}
		W'_{AC}(v;x,r) \geq \frac{2}{r^{n+2}} \int_{\p B_r(x)} (v - r v_r)^2;
		\end{equation}
		and
		\begin{equation}
		W_{AC}(v;x,0+)\geq \frac{1}{2}|B_1|
		\end{equation}\label{eq:wkacd2}
		for every $x\in \p \{v>0\}$.
	\end{enumerate}
\end{definition}
It is not difficult to verify that (a-c) imply (d-e), but that will not be relevant below. Let us note, however, the following properties of such $v$:

\begin{proposition}\label{prop:wkac} Let $v$ be weakly stationary for the Alt-Caffarelli problem. Then
	\begin{enumerate}
		\item There is a $c_*$ such that $|B_r(x)\cap \{v>0\}|\geq c_* r^n$ for all $x\in \p \{v>0\}$ and $r>0$.
		\item The set $\{v>0\}$ has locally finite perimeter.
		\item $\p^* \{v>0\}$ is relatively open in $\p \{v>0\}$, and is locally given by analytic hypersurfaces.
		\item If $v$ satisfies just (W2), (W4), and \eqref{eq:wkacd1} in (W5), then
		\[
		\lim_{s\searrow 0} \frac{|B_s(x)\cap \{v>0\}|}{s^n} = W_{AC}(v;x,0+)
		\]
		and
		\[
		|W_{AC}(v;x,r) - W_{AC}(v;y,r)|\leq C \frac{|x-y|}{r}
		\]
		for all $x,y\in \p \{v>0\}$ and $r>0$.
	\end{enumerate}
\end{proposition}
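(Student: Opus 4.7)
The plan is to dispatch the four parts in sequence, leveraging the Alt--Caffarelli-type structure of (W1)--(W5), which closely parallels the setup of Section~\ref{sec:info}.

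Part (1) is direct: the monotonicity \eqref{eq:wkacd1} together with the lower bound in (W5) gives $W_{AC}(v;x,r)\geq \frac{1}{2}|B_1|$ for all $r>0$. Expanding the definition of $W_{AC}$, using $|\n v|\leq L$ from (W4) on the support $\{v>0\}$, and discarding the nonnegative boundary term yields
\[
(L^2+1)\,\frac{|B_r(x)\cap\{v>0\}|}{r^n}\geq \tfrac{1}{2}|B_1|,
\]
i.e., (1) with $c_*=|B_1|/(2(L^2+1))$.

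For part (4), I would run the blow-up argument of Lemma~\ref{lem:weissfacts}(1), adapted to a single function. Set $v^s(y)=v(x+sy)/s$. The uniform Lipschitz bound and the distributional inequality $|\triangle |v^s||\leq Cs$ (from (W2)) let me extract a subsequence $v^{s_i}\to v^0$ locally uniformly and with $\n v^{s_i}\to \n v^0$ strongly in $L^2_{\text{loc}}$, via the compact embedding in \cite{P}[Proposition 5.10]. The function $v^0$ is Lipschitz and harmonic on $\{v^0>0\}$. Integrating \eqref{eq:wkacd1} between $as_i$ and $bs_i$ and passing $i\to\infty$ forces $\int (v^0-r(v^0)_r)^2=0$ on every annulus, so $v^0$ is $1$-homogeneous. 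A distributional integration by parts (using harmonicity on $\{v^0>0\}$ and $v^0=0$ on $\supp \triangle v^0$) gives $\int_{B_1}|\n v^0|^2=\int_{\p B_1}v^0(v^0)_r=\int_{\p B_1}(v^0)^2$, the last step by $1$-homogeneity, so $W_{AC}(v^0;0,1)=|B_1\cap\{v^0>0\}|$. Matching $\lim_i W_{AC}(v;x,s_i)=\lim_i W_{AC}(v^{s_i};0,1)$ gives the density formula on the subsequence, and since the right-hand side is independent of subsequence, the full limit exists. The $x$-Lipschitz estimate for $W_{AC}(\cdot;\cdot,r)$ is the same annular computation as Lemma~\ref{lem:weissfacts}(3), using $|B_r(x)\triangle B_r(y)|\leq C|x-y|r^{n-1}$.

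Part (3) is a reprise of Theorem~\ref{thm:reduced}. At a reduced boundary point $x$ with normal $\nu$, the half-space convergence $(\{v>0\}-x)/r\to\{z\cdot\nu<0\}$ combined with the blow-up in part~(4) forces $v^0(z)=\alpha(z\cdot\nu)^-$ for some $\alpha>0$, and Lemma~\ref{lem:visc} applied to (W1) with $\xi_0=\xi_1=1$ fixes $\alpha=1$. This is exactly the initial-scale flatness needed for \cite{KL}[Theorem~7.2], which yields $C^{1,\a}$ regularity of $\p\{v>0\}$ near $x$; analyticity follows by a standard hodograph transform and elliptic bootstrap, and $\p^*\{v>0\}$ is relatively open because the flatness hypothesis is stable under small perturbations of the base point.

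The main obstacle is part (2). Once (3) is in hand, the normal derivative of $v$ along $\p^*\{v>0\}$ equals $1$, so $\triangle v$ is a nonnegative Radon measure on $\R^n$ with $\triangle v\geq \cH^{n-1}\lfloor_{\p^*\{v>0\}}$, and its total mass on $B_R$ is bounded by $\int_{\p B_R}|\n v|\leq LR^{n-1}|S^{n-1}|$ via integration by parts and (W4); in particular $\cH^{n-1}(\p^*\{v>0\}\cap B_R)<\infty$. The remaining task is to control the singular part $\p\{v>0\}\setminus \p^*\{v>0\}$: I would do this by a dimension-reduction argument on tangent cones, noting that (W1)--(W5) are stable under blow-up (by Lemma~\ref{lem:visc} for (W1) and by \eqref{eq:wkacd1} for (W5)), so every tangent cone is itself weakly stationary for Alt--Caffarelli. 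The lower density bound (1) and the upper density bound from (W3) rule out degenerate tangent cones and, through a Federer-type reduction, place the singular set in dimension at most $n-3$; combined with Federer's structure theorem, $\{v>0\}$ has locally finite perimeter.
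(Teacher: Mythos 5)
Your treatment of parts (1), (3), and (4) matches the paper's approach, and the blow-up argument in (4) is carried out correctly (the annular integration of \eqref{eq:wkacd1} to get $1$-homogeneity of the blow-up, strong $H^1$ convergence via the distributional Laplacian bound, and the integration by parts to convert the Dirichlet-minus-boundary term to zero). Part (3) as a reprise of Lemma~\ref{lem:wklb} and Theorem~\ref{thm:reduced} is also correct.

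Part (2), however, has a genuine gap. Your first half --- bounding $\mathcal{H}^{n-1}(\p^*\{v>0\}\cap B_R)$ by sandwiching the measure $\triangle v$ between $\mathcal{H}^{n-1}\!\mres\p^*\{v>0\}$ and $CR^{n-1}$ --- reproduces the paper's argument and is fine. But from there the paper simply invokes Federer's criterion \cite{F}[Theorem~4.5.11], which states that a measurable set whose measure-theoretic boundary has locally finite $\mathcal{H}^{n-1}$ measure is of locally finite perimeter; no further structure on the singular set is needed, and the proof of (2) is complete at that point. You instead propose an additional dimension-reduction pass on tangent cones, ``through a Federer-type reduction,'' to control $\p\{v>0\}\sm\p^*\{v>0\}$ \emph{before} concluding finite perimeter. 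This is circular in the present context: the dimension-reduction machinery developed in Lemma~\ref{lem:dimred} makes essential use of the relative isoperimetric inequality for $\{v>0\}$, which in the paper is explicitly justified by citing the finite perimeter conclusion of Proposition~\ref{prop:wkac}(2) (see the reference to \cite{F}[4.5.3, 4.5.6(4)] in the proof of Lemma~\ref{lem:dimred}). You cannot use that lemma, or the estimates feeding into it, to establish the very fact it presupposes. The fix is simply to drop the dimension-reduction step entirely and invoke Federer's criterion directly from the $\mathcal{H}^{n-1}$ bound; the dimension reduction belongs afterward, in Lemma~\ref{lem:dimred}, once finite perimeter is already in hand.
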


\begin{proof}[Sketch of Proof.]
	For (1), this is immediate from combining (W4) and (W5):
	\[
	\frac{|B_1|}{2} \leq W(x,r) \leq \frac{1}{r^n}\int_{\{v>0\}\cap B_r(x)} |\n v|^2 +1 \leq C\frac{|B_r(x)\cap \{v>0\}|}{r^n}.
	\]
	Then (3) may be checked by following the proofs of Lemma \ref{lem:wklb} and Theorem \ref{thm:reduced}, but with $v$ in place of $u_k$ everywhere. For (2), note that $v$ is subharmonic on $\R^n$, so $\triangle v$ may be represented by a positive Borel measure and
	\[
	\triangle v (B_r(x)) \leq \int_{\p B_r(x)}|\n v| \leq C r^{n-1}
	\]
	on every ball, from (W4). On the other hand, using that $\triangle v \geq 0$ and (3),
	\[
	\triangle v(B_r(x)) \geq \triangle v(B_r(x)\cap \p^* \{v>0\}) = \int_{B_r(x)\cap \p^* \{v>0\}} |\n v| d\cH^{n-1} = \cH^{n-1}(B_r(x)\cap \p^* \{v>0\}).
	\]
	Combining, this gives that $\p^* \{v>0\}$ has locally finite $\cH^{n-1}$ measure. A theorem of H. Federer (\cite{F}[Theorem 4.5.11]) now implies that $\{v>0\}$ has finite perimeter. The final fact (4) may be checked by following the proof of Lemma \ref{lem:weissfacts}.
\end{proof}

\begin{lemma}\label{lem:dimred} Let $v$ be weakly stationary for the Alt-Caffarelli problem, and also $1$-homogeneous. Then $\cH^{n-3+\t}(\p\{v>0\}\sm \p^*\{v>0\}) = 0$ for every $\t>0$.
\end{lemma}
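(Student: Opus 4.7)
The plan is to argue by contradiction using Federer's dimension reduction, adapted to the class of weakly stationary solutions. Suppose $\cH^{n-3+\t}(\Sigma) > 0$, where $\Sigma := \p\{v > 0\} \sm \p^* \{v > 0\}$. The goal is to extract, via iterated blow-ups at singular points, a weakly stationary $1$-homogeneous solution on $\R^2$ whose positivity set has a singular boundary point at the origin, and then to rule this out by a direct classification in dimension two.

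The first step is a closure property: the class of weakly stationary solutions is stable under blow-up. Given weakly stationary $w$ and $w^{x,r}(y) := w(x+ry)/r$, the uniform Lipschitz bound (W4) yields a subsequence converging locally uniformly and weakly in $H^1_{\text{loc}}$ to some $w^\infty$. Harmonicity on the positive set is preserved; the inequality \eqref{eq:wkacd1} and the density bound (W3) pass to the limit via Proposition \ref{prop:wkac}(1) and (4); the viscosity conditions are inherited through the specialization of Lemma \ref{lem:visc} to $N=1$, $\xi_0 = \xi_1 = 1$. The Weiss formula, applied to the rescalings as in Lemma \ref{lem:weissfacts}(1), forces every blow-up of $w$ at a boundary point to be $1$-homogeneous. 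The uniform density bound in Proposition \ref{prop:wkac}(1) also yields local Hausdorff convergence of $\{w^{x,r_i} > 0\}$ (and of its topological boundary) to the corresponding sets for $w^\infty$, and in fact of the singular sets as well, since singularity is characterized in terms of the value $W_{AC}(\cdot,0+)$, which is continuous along such limits by Proposition \ref{prop:wkac}(4).

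With closure in hand, one runs the standard Federer dimension reduction on $\Sigma$. Because $v$ is itself $1$-homogeneous, a blow-up at any nonzero $x_0 \in \Sigma$ is automatically translation-invariant along $x_0/|x_0|$, producing a weakly stationary $1$-homogeneous solution on $\R^n$ with one translation invariance and the origin in its singular set. Iterating this extraction at $\cH^{s}$-a.e.\ point of the singular set at each stage (which is justified by the Hausdorff convergence of singular sets and Federer's abstract measure-theoretic lemma for sets of positive $s$-dimensional measure), and using the hypothesis $\cH^{n-3+\t}(\Sigma) > 0$, one reaches after finitely many steps a weakly stationary $1$-homogeneous solution $w_{**}$ on $\R^2$ with $0 \in \Sigma_{w_{**}}$ (having factored out $n-2$ translation-invariant directions).

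Finally, such $w_{**}$ is classified by writing $w_{**}(r,\theta) = r \phi(\theta)$ in polar coordinates. Harmonicity on $\{w_{**} > 0\}$ reduces to $-\phi'' = \phi$ on each arc component $I \subset S^1$ of $\{\phi > 0\}$; since the first Dirichlet eigenvalue on an arc of length $\ell$ equals $\pi^2/\ell^2$, we must have $\ell = \pi$ and $\phi|_I$ is a positive multiple of a shifted sine. Applying the viscosity conditions (V2)--(V3) at an endpoint of $I$ (with small balls tangent from the inside and outside of $\{w_{**} > 0\}$ respectively) fixes that multiple to $1$. Hence $\{w_{**} > 0\}$ is a pairwise-disjoint union of open half-planes, and so is either a half-plane or the complement of a straight line; in either case $\p \{w_{**} > 0\}$ is smooth and $\Sigma_{w_{**}} = \emptyset$, contradicting $0 \in \Sigma_{w_{**}}$. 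The main obstacle is the closure step: checking that (V2)--(V3) and \eqref{eq:wkacd1} pass to the limit, and that singular sets converge in Hausdorff sense so the Federer iteration makes sense. Both rest on the uniform non-degeneracy from Proposition \ref{prop:wkac}(1), and the viscosity part is essentially a rerun of Lemma \ref{lem:visc}; the subsequent dimension reduction is then routine.
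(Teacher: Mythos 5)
Your proposal follows essentially the same route as the paper: Federer dimension reduction, showing the class of weakly stationary $1$-homogeneous solutions is closed under blow-up at nonzero singular points (which yields a translation-invariant direction and hence a reduction to one lower dimension), and terminating with a classification in $\R^2$. Two points deserve flagging, one minor and one more substantial.

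First, in your $\R^2$ classification you list the two candidate cones (half-plane, complement of a line) and assert that in either case $\Sigma_{w_{**}} = \emptyset$. That is wrong for the complement-of-a-line case: there every boundary point has Lebesgue density $1$ for $\{w_{**} > 0\}$, so $\p^*\{w_{**}>0\}$ is \emph{empty} and $\Sigma_{w_{**}}$ is the whole line. What actually excludes this case is the upper density bound in (W3), which is exactly how the paper argues. This is a small slip, but without (W3) the $n=2$ step would not close.

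Second, and more importantly, the closure step is thinner than you suggest. To apply Lemma \ref{lem:visc} to a blow-up sequence $v_r \to v_0$ you need the hypothesis that $\p\{v_r>0\}$ and $\overline{\{v_r>0\}}$ converge locally in Hausdorff distance to $\p\{v_0>0\}$ and $\overline{\{v_0>0\}}$. The inclusion $\p\{v_0>0\} \ss H := \lim \p\{v_r>0\}$ is easy (it follows from the support of $\triangle v_0$ living in $H$), but the reverse inclusion --- that no spurious boundary points of $\{v_r>0\}$ accumulate inside the interior of $\{v_0 = 0\}$ --- does not follow from Proposition \ref{prop:wkac}(1) alone, because you do not know a priori that $|\{v_r>0\}\cap E| \to |\{v_0>0\}\cap E|$. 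The paper spends substantial effort here: it exhausts $\text{Int}\,\{v_0=0\}$ by smooth sets $V_\e$, uses the Caccioppoli inequality to show $\triangle v_r(V_\e) \to 0$, translates this into $\cH^{n-1}(\p^*\{v_r>0\}\cap V_\e) \to 0$, and then uses the relative isoperimetric inequality together with the density upper bound from (W3) to conclude $|\{v_r>0\}\cap V_\e| \to 0$, which combined with Proposition \ref{prop:wkac}(1) finally gives $H \cap V_\e = \emptyset$. Your plan should make this step explicit; ``essentially a rerun of Lemma \ref{lem:visc}'' overstates how much that lemma does for you, since it \emph{presupposes} the Hausdorff convergence of the boundaries rather than establishing it.

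With those two caveats addressed, the rest of the proposal --- extraction of a translation-invariant direction from a blow-up at $x_0 \neq 0$ (via $\p_{x_0/|x_0|} v_0 = 0$), passing to the trace on the orthogonal hyperplane, verifying the trace is weakly stationary, and the Federer-style iteration --- matches the paper's argument.
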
 

\begin{proof}
	We first check that if $\p \{v>0\}\sm \p^*\{v>0\}$  is nonempty, then $n\geq 3$. Indeed, if $n=2$, there are only two possibilities for $\{v>0\}$: either $\{v>0\}$ is $\{x_2> 0 \}$ in some coordinates, or it is $\{x_2\neq 0\}$ (no cone other than a half-plane can support a nontrivial, nonnegative harmonic function which is homogeneous of degree $1$). The first has $\p \{v>0\}\sm \p^*\{v>0\}$ empty, while the second contradicts (W3).
	
	Next, we show that given any $v$ as in the hypotheses of the Lemma with $\cH^{s}(\p\{v>0\}\sm \p^*\{v>0\}) > 0$ for $s\geq 1$, there is a $w:\R^{n-1} \rightarrow [0,\8)$ which is weakly stationary for the Alt-Caffarelli problem, homogeneous of degree $1$, and has $\cH^{s-1}(\p\{v>0\}\sm \p^*\{v>0\}) > 0$. Together with the basic observation about cones that if $\p\{v>0\}\sm \p^*\{v>0\}$ contains at least one point besides the origin, then it contains a ray (and hence $\cH^{1}(\p\{v>0\}\sm \p^*\{v>0\}) =\8$), this implies the conclusion.
	
	Let us assume that $x\in  \p\{v>0\}\sm \p^*\{v>0\}$ and $x\neq 0$, and consider the blow-up limit (with respect to local uniform convergence)
	\[
	v_r(y) : = \frac{v(x+r y)}{r}\rightarrow v_0(y).
	\]
	Arguing as in the proof of Lemma \ref{lem:weissfacts}, have that $v_0$ is homogeneous of degree $1$ (from the Weiss formula), Lipschitz continuous, harmonic where positive, has $v_r\rightarrow v_0$ strongly in $H^1$, and has monotone Weiss energy at each point.  This last fact comes from taking the limit
	\[
	 W_{AC}(v_0;y,s) = \lim_{s\searrow 0} W_{AC}(v_r;y,s) = \lim_{s\searrow 0} W_{AC}(v;x+ry,sr),
	\]
	so that
	\begin{align*}
	 W_{AC}(v_0;y,b) -  W_{AC}(v_0;y,a) &= \lim_{s\searrow 0} W_{AC}(v;x+ry,br) - W_{AC}(v;x+ry,ar) \\
	 &\geq \lim_{r\searrow 0} \int_{ar}^{br} \frac{2}{t^{n+2}} \int_{\p B_t(x+ry)} (v - r \p_r v)^2\\
	 & = \lim_{r\searrow 0} \int_{a}^{b} \frac{2}{t^{n+2}} \int_{\p B_t(y)} (v_r - r \p_r v_r)^2\\
	 & = \int_{a}^{b} \frac{2}{t^{n+2}} \int_{\p B_t(y)} (v_0 - r \p_r v_0)^2.
	\end{align*}
	This also gives that
	\begin{equation}\label{eq:dimredi2}
	W_{AC}(v_0;y,s) \geq \frac{|B_1|}{2},
	\end{equation}
	so we may deduce that $v_0$ satisfies (W5) (and, from earlier, (W4)).  
	 
	 We also have 
	\[
	\triangle v_r \rightarrow \triangle v_0
	\]
	as distributions. Now, we have that $v$ is subharmonic on $\R^n$, and
	\[
	\triangle v(E) \geq \triangle v(E \cap \p^* \{v>0\}) \geq \int_{ \p^*\{v>0\}\cap E} |v_\nu|d\cH^{n-1} \geq \cH^{n-1}(\p^*\{v>0\}\cap E)
	\]
	for any Borel set $E$, interpreting the Laplacian as a measure. Take any $y\in \p \{v>0\}$: then from (V3) and the relative isoperimetric inequality (using Proposition \ref{prop:wkac}(2) here; see \cite{F}[4.5.3 and 4.5.6(4)])
	\begin{align}
	(\cH^{n-1}(B_r(y)\cap  \p^*\{v>0\}))^\frac{n}{n-1} &\geq c \min \{ |B_r(y)\cap \{v>0\}|,|B_r(y)\sm  \{v>0\}| \}\nonumber\\
	&  > c(y) r^n>0 \label{eq:dimredi1} 
	\end{align}
	for all $r$ small enough. In particular, this implies $\p^* \{v>0\}$ is dense in $\p \{v>0\}$, so $\p \{v>0\} = \supp \triangle v$. 
	
	Up to passing to a subsequence, we may set $H = \lim \p \{v_r>0\}= \lim \supp \triangle v_r$ in Hausdorff topology. It follows that  
	\[
	\supp \triangle v_0 \ss \lim \supp \triangle v_r = H;
	\]
	this is true of the supports of any sequence of measures converging in weak-* topology.  On the other hand, we also have that $\p\{v_0>0\}\ss \supp \triangle v_0$: indeed, take a point $y\in \p\{v_0>0\}$. If $y\notin \supp \triangle v_0$, then on some small ball $B_r(y)$ $v_0$ is harmonic, so by the strong maximum principle either $v_0=0$ or $v_0>0$ on the ball. Either one violates $y \in \p \{v_0>0\}$, so
	\begin{equation}\label{eq:dimredi3}
	\p \{v_0>0\} \ss \supp \triangle v_0 \ss H.
	\end{equation}
	
	Let us check that $v_0$ is not identically $0$. From \eqref{eq:dimredi1}, we have that
	\[
	\triangle v(B_r(x)) \geq c(x) r^{n-1}. 
	\]
	It follows that
	\[
	\triangle v_0(B_1) \geq c(x) >0,
	\]
	and so $v_0$ cannot be identically $0$. This shows that $v_0$ satisfies (W2).

	Note that at this point we may verify property (W3), using the Weiss formula. The main point is that
	\[
	\lim_{s\rightarrow 0}\frac{|B_s(y)\cap \{v_0>0\}|}{|B_s|} = \frac{1}{|B_1|} \lim_{s\rightarrow 0} W_{AC}(v_0; y,s),
	\]
	from Proposition \ref{prop:wkac}(4). Then the lower bound in (W3) follows immediately from the lower bound \eqref{eq:dimredi2}. For the upper bound, note that $W_{AC}(v_0;0,s)=c_0<1$ for all $s$, and as $R\rightarrow \8$,
	\[
	W_{AC}(v_0;y,R) \leq  W_{AC}(v_0;0,R) + C \frac{|y|}{R} \rightarrow  c_0<|B_1|,
	\]
	with the left inequality from Proposition \ref{prop:wkac}(4). Then the monotonicity implies $W_{AC}(v_0,y,0+)<|B_1|$ as well. One easy consequence of (W3) is that $|\p \{v_0>0\}|=0$.
	
	Our next goal is to check that $\p \{v_0>0\} = H$. Assume that this is not the case, and select a smooth open subset $V_\e$ compactly contained $\text{Int}\{v_0=0\}\cap B_1$ such that
	\[
	|(\{v_0=0\}\cap B_1)\sm V_\e|\leq \e
	\]
	(this is possible as $|\p \{v_0 = 0\}| = 0$). We have that $v_r\rightarrow 0$ uniformly on a neighborhood of $V_\e$. Find a finite collection of balls $B^{k}$ which cover $V_\e$ and $v_r\rightarrow 0$ on their doubles $2B^k$. Then 
	\begin{align*}
	\triangle v_r (B^k) &\leq \fint_{1}^{3/2}\triangle v_r (t B^k) dt \\
	&\leq \fint_{1}^{3/2}\int_{ \p tB^k} |\n v_r|d\cH^{n-1} dt \\
	&\leq C (\int_{\frac{3}{2}B^k} |\n v_r|^2)^{\frac{1}{2}} \\
	&\leq C (\int_{2B^k} | v_r|^2)^{\frac{1}{2}} \rightarrow 0.
	\end{align*}
	The last step used the Caccioppoli inequality on $v_r$. Summing over $k$, we see that
	\[
	\triangle v_r (V_\e)\rightarrow 0. 
	\]
	This gives
	\[
	\cH^{n-1} ( \p \{v_r>0\}\cap  V_\e )\leq \triangle v_r (V_\e)\rightarrow 0
	\]
	as well, from \ref{eq:dimredi1}.
	
	Applying the relative isoperimetric inequality in $V_\e$, the above leads to one of two alternatives: either $|\{v_r>0\}\cap V_\e|\rightarrow 0$, or $|V_\e \sm \{v_r>0\}|\rightarrow 0$. The latter, however, leads to a contradiction for $\e$ small enough: we know from (W3) that
	\[
	\lim \frac{|B_1\cap \{v_r >0\}|}{|B_1|}\leq c <1.
	\]
	We also know that as for each $y\in \{v_0>0\}$, $v_r(y)>0$ for sufficiently small $r$, 
	\[
	\lim |B_1 \cap (\{v_0 > 0\} \sm \{v_r>0\})| = 0. 
	\]
	This means that
	\[
	\lim |B_1 \cap (\{v_0 = 0\} \sm \{ v_r = 0\}) \geq 1-c >0,
	\]
	and so
	\[
	\lim |B_1 \cap (V_\e \sm \{ v_r = 0\}) \geq 1- c - \e.
	\]
	Hence for $\e$ smaller than $1-c$ and $r$ small enough, the second alternative is impossible.
	
	We have shown that 
	\begin{equation}\label{eq:dimredi4}
	|\{v_r>0\}\cap V_\e|\rightarrow 0.
	\end{equation}
	On the other hand, from Proposition \ref{prop:wkac}(1), we have
	\[
	\frac{B_s(y)\cap \{v_r> 0\}}{s^n} \geq c_*
	\]
	uniformly in $y\in \p\{v_r>0\}$, $r$, and $s$. It follows that $H\cap V_\e$ is empty: for any $y_r \in \p\{v_r>0\}\rightarrow y\in V_\e$, there is a small ball $B_s(y_r)\ss V_\e$ with $|B_s(y_r)\cap \{v_r>0\}|>c_* s^n$; for small $r$ this contradicts \eqref{eq:dimredi4}. Repeating for every $\e>0$, we deduce that $H\ss \p \{v_0>0\}$, which together with \eqref{eq:dimredi3} establishes the claim.
	
	It follows from Lemma \ref{lem:visc} that $v_0$ is a viscosity solution, verifying property (W1). We have at this point established that $v_0$ is weakly stationary for the Alt-Caffarelli functional.
	
	Next we will show that $\p_{e} v_0=0$, where $e = \frac{x}{|x|}$. To see this, take any $y\in \{v_0>0\}$; then for $r$ small, we have $v_r(y)>0$ and
	\[
	\p_e v_0(y) = \lim_{r\rightarrow 0} \p_e v_r(y).
	\] 
	Then
	\[
	\p_e v_r(y) = \p_e v(x+ry) = \p_{\frac{x+ry}{|x+ry|}} v(x+ry) + O(1-e \cdot \frac{x+ry}{|x+ry|}) = \frac{v(x+ry)}{|x+ry|} + O(r).
	\]
	The second step used that $v$ is Lipschitz, while the third step used the homogeneity of $v$. Using that $v(x)=0$, the right-hand side is controlled by $r$, so taking the limit gives $\p_e v_0=0$. Therefore $\{v_0>0\}$ is cylindrical, invariant in the $e$ direction.
	
	Finally, we may easily check that the trace $v_*$ of $v_0$ to a hyperplane orthogonal to $e$ is weakly stationary for the Alt-Caffarelli problem. Indeed, (W1-4) are trivial. The monotonicity of the Weiss energy \eqref{eq:wkacd1} may be verified as in \cite{KL}[Section 8], while the lower bound \eqref{eq:wkacd2} can be inferred from using Proposition \ref{prop:wkac}(4) on both $v_0$ and $v_*$:
	\begin{align*}
	\frac{|B_1|}{2} &\leq W_{AC}(v_0;x,0+)\\
	& = \lim_{s\searrow 0+} \frac{|B_s(x)\cap \{v_0>0\}|}{s^n}\\
	& = \lim_{s\searrow 0+} \frac{|B'_s(x')\cap \{v_*>0\}|_{n-1}}{s^{n-1}}\\
	& = W_{AC}(v_*;x',0+).
	\end{align*}
	Here $x'$ and $B'$ represent the projections of $x$ and $B$ onto the $n-1$ dimensional hyperplane orthogonal to $e$.
	
	The size of the singular set $\p\{v_0>0\}\sm \p^* \{v_0>0\}$ can now be estimated using standard arguments from geometric measure theory; see \cite{Giusti}[Chapter 11].
\end{proof}

\begin{theorem}\label{thm:ACset} If $n=2$, then $Z_{AC}$ is empty. If $n\geq 3$, then $\cH^{n-3+\t}(Z_{AC})=0$ for each $\t>0$. 
\end{theorem}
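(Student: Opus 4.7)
The plan is to reduce the analysis at points of $Z_{AC}$ to the weakly stationary Alt-Caffarelli problem and apply Lemma \ref{lem:dimred} together with a standard Federer dimension reduction. Fix $x \in Z_{AC}$. By Lemma \ref{lem:weissfacts}(1) and the definition of $Z_{AC}$, the density $\lim_{r \searrow 0} |B_r(x) \cap \Wb|/|B_r|$ lies strictly in $(0, 1)$, so $W(x, r) \leq \theta |B_1|$ for some $\theta < 1$ and all $r$ small enough. Lemma \ref{lem:viscsol} then supplies viscosity solution structure for $(\{u_k\}_{k\in\cK}, \Wb)$ with parameters $\{\xi_k\}$, $\xi_0 = 1$ on a ball $B_{\g r}(x)$, and gives $Z = \p \Wb$ locally.

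Next, blow up along $r_j \searrow 0$: set $v_k^{r_j}(y) = u_k(x + r_j y)/r_j$ and $V_{r_j} = (\Wb - x)/r_j$. By Lemma \ref{lem:lip} and local compactness, along a subsequence $v_k^{r_j} \to v_k$ locally uniformly and weakly in $H^1_{\text{loc}}$, and $\bar V_{r_j} \to \bar V$ in Hausdorff topology. Arguing as in Lemma \ref{lem:weissfacts}(1), each $v_k$ is 1-homogeneous and harmonic wherever nonzero, while by Lemma \ref{lem:visc} the pair $(\{v_k\}, V)$ is a viscosity solution on $\R^n$, and $|B_1 \cap V| = W(x, 0+) \in (0, |B_1|)$. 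Define the scalar function
$$w(y) = \Bigl(\sum_{k \in \cK} \xi_k v_k(y)^2\Bigr)^{1/2}.$$
The plan is to verify that $w$ is weakly stationary for Alt-Caffarelli. Property (W4) Lipschitz continuity is inherited from Lemma \ref{lem:lip}, and (W2) harmonicity on $\{w > 0\}$ from the $v_k$ (the lower-order term $q_k v_k$ vanishes in the blow-up limit). Property (W3) combines the Lebesgue density bound on $V$ (Theorem \ref{thm:lim}(4) applied to $(\{v_k\}, V)$) with the strict inequality $|B_1 \cap V| < |B_1|$, once one establishes $\{w > 0\} = V$. The Weiss monotonicity and lower bound (W5) transfer directly from Theorem \ref{thm:lim}(6) and Lemma \ref{lem:weissfacts}(2) via the identity $\int_{\p B} w^2 = \sum_k \xi_k \int_{\p B} v_k^2$, and (W1) follows by specializing (V2--V3) for the vector system to scalar test profiles aligned with $\n w$.

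With $w$ weakly stationary for AC, Lemma \ref{lem:dimred} yields $\cH^{n-3+\t}(\p \{w > 0\} \setminus \p^*\{w > 0\}) = 0$, and a standard Federer dimension reduction argument (iteratively blowing up at concentration points of the singular set, obtaining cones with one additional direction of translational invariance at each step, and reducing to the base dimension $n = 2$) then lifts this to $\cH^{n-3+\t}(Z_{AC}) = 0$ for $n \geq 3$. For $n = 2$, Lemma \ref{lem:dimred}'s proof forces the singular set to be empty, so the 1-homogeneous blow-up $\{w > 0\}$ must be a half-plane; but then $V$ is a half-plane and Theorem \ref{thm:reduced} places $x \in \p^*\Wb$, contradicting $x \in Z_{AC}$. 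Hence $Z_{AC}$ is empty in the plane. The main technical obstacle is verifying $\{w > 0\} = V$: without the lower bound \eqref{eq:lb}, one must argue that at any interior point $y \in V$ where all $v_k$ vanish, an exterior tangent ball together with the viscosity property (V3) would force $0 = \sum_k \xi_k \b_k^2 \geq \xi_0 = 1$, a contradiction, thereby ruling out the pathological scenario.
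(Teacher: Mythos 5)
Your overall architecture matches the paper's — blow up at a point of $Z_{AC}$, extract a $1$-homogeneous limit, show it is weakly stationary for Alt–Caffarelli, and conclude via Lemma~\ref{lem:dimred} plus Federer dimension reduction — but two specific steps as written contain real gaps, one of them fatal as stated.

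The fatal one is harmonicity of $w = \bigl(\sum_{k\in\cK}\xi_k v_k^2\bigr)^{1/2}$. If the $v_k$ are harmonic on $\{w>0\}$, then a direct computation gives
\[
\triangle w = \frac{\sum_k \xi_k |\nabla v_k|^2}{w} - \frac{\bigl|\sum_k \xi_k v_k \nabla v_k\bigr|^2}{w^3} \geq 0,
\]
with equality iff the vectors $(\sqrt{\xi_k}v_k)_k$ and $(\sqrt{\xi_k}\nabla v_k)_k$ are proportional — i.e.\ iff the $v_k$ are pointwise mutually proportional. So $w$ is subharmonic but not harmonic in general, and (W2) fails for $w$ unless you first prove that all the blow-up functions $v_k$ are constant multiples of a single one. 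The paper does establish exactly this proportionality, but via a nontrivial argument: it first shows one $v_{k_*}$ is nontrivial, then that $v_{k_*}$ cannot change sign (by the spherical Faber–Krahn rigidity — a sign-changing $1$-homogeneous harmonic function forces two complementary half-spheres, contradicting positive density of $\Wb^c$ at $0$), and finally that every $v_k$ vanishes on $H = \p\{v_{k_*}>0\}$ and solves the same spherical eigenvalue problem, so by simplicity of the first eigenvalue all $v_k$ are multiples of $v_{k_*}$. Only after that does a scalar AC function appear. You need to insert this step before invoking harmonicity of $w$ (or equivalently, work with $v_{k_*}$ instead of $w$ from the start, as the paper does).

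The secondary gap is nontriviality of the blow-up. Your argument ("at any interior point $y\in V$ where all $v_k$ vanish, an exterior tangent ball and (V3) give a contradiction") does not work: (V3) requires a ball lying outside $\W$ tangent to $\p\W$, and a region where all $v_k\equiv 0$ need not have such a tangent ball — indeed the whole difficulty of the degenerate problem, as flagged around \eqref{eq:lb}, is that one cannot rule out such dead regions by soft viscosity arguments. The paper instead proves nontriviality by a measure-theoretic argument: it uses the Euler--Lagrange identity $\sum_k \xi_k (u_k)_\nu^2 = 1$ on $\p^*\Wb$ (Theorem~\ref{thm:reduced}) to get $\sum_{k\in\cK}|(u_k)_\nu| \geq c$ there, converts this into the lower bound $\sum_k \triangle|u_k|(B_r) \gtrsim \cH^{n-1}(\p^*\Wb\cap B_r) - Cr^n$, invokes the relative isoperimetric inequality (using $0<c_*\leq |B_r\cap\Wb|/|B_r|\leq 1-c_*$) to get $\gtrsim r^{n-1}$, scales, and passes to the limit of distributional Laplacians to conclude $\sum_k\triangle|v_k|(B_\rho)>0$. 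Without some argument of this kind, your proposal has no proof that the blow-up is not identically zero.

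In short, the route is the right one, but both the proportionality of the $v_k$ (needed for harmonicity of the AC function) and the nontriviality of the blow-up require the substantive arguments the paper supplies; neither can be waved through with generic viscosity considerations alone.
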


\begin{proof}
	Assume $0\in Z_{AC}$. We will show that a blow-up limit of $Z$, in Hausdorff topology, is the positivity set of a function which is weakly stationary for the Alt-Caffarelli problem and also 1-homogeneous. Combined with Lemma \ref{lem:dimred} and standard measure-theoretic arguments, this implies the conclusion (see \cite{Giusti}). Note that, for small enough $r$, $W(0,r)\leq c < |B_1|$, so $W(x,r)<1$ on a small neighborhood of $0$, using Lemma \ref{lem:weissfacts}(1,3). It follows that the Lebesgue density of $\Wb$ at every point in $Z \cap B_r$ is strictly less than $1$, so $Z\cap B_r$ is contained in $\p^*\Wb \cup Z_{AC}$. In particular, $(Z\sm \p \Wb)\cap B_r$ is empty.
	
	Let us take any sequence $r\searrow 0$ along which
	\[
	u_{k,r}:=\frac{u_k(r \cdot)}{r} \rightarrow v_k
	\]
	locally uniformly for each $k\in \cK$, $\Wb/r \rightarrow V$ as sets of locally finite perimeter, $\p \Wb /r \rightarrow H$ in Hausdorff topology, and
	\[
	0<\lim_{r\searrow 0} \frac{|B_r \cap \Wb|}{|B_r|} <1.
	\]
	Arguing as in the proof of Lemma \ref{lem:weissfacts}(1), we have that each $v_k$ is homogeneous of degree $1$, and the $u_{k,r}\rightarrow v_k$ strongly in $H^1$. Our goal will be to show that not all of the $v_k$ are $0$ everywhere.
	
	Using Theorem \ref{thm:reduced}, we have that for any Borel set $E$,
	\[
	\triangle |u_k|(E\cap \p^* \Wb) = \int_{E\cap \p^* \Wb} |(u_k)_\nu| d\cH^{n-1} ,
	\]
	interpreting $-\triangle |u_k|$ as a measure. Moreover, we have
	\[
	\sum_{k=1}^N \xi_k |(u_k)_\nu|^2 = 1
	\]
	at every point of $\p^*\Wb$, so
	\[
	\sum_{k\in \cK} |(u_k)_\nu| \geq c
	\]
	at every point as well. This gives
	\[
	\sum_{k\in \cK}\triangle |u_k|(E\cap \p^* \Wb) \geq c \cH^{n-1}(\p^*\Wb \cap E).
	\]
	On the other hand, we have that $\triangle |u_k| \geq 0 $ on $\{u_k=0\}$ (indeed, this is true of any continuous function), and also
	\[
	\triangle |u_k| (B_r \cap \{u_k\neq 0\}) \geq - C r^n, 
	\]
	using the fact that $u_k$ solves the eigenfunction equation there. Combining, we have
	\[
	\sum_{k\in \cK}\triangle |u_k|(B_r) \geq c \cH^{n-1}(\p^*\Wb \cap B_r) - C r^n.
	\]
	Using the fact that for each $r$ along our blow-up sequence, $0<c_* \leq \frac{|B_r\cap \Wb|}{|B_r|} \leq 1-c_* <1$, we now invoke the relative isoperimetric inequality to give 
	\[
	\sum_{k\in \cK}\triangle |u_k|(B_r) \geq c r^{n-1} - C r^n.
	\]
	From scaling,
	\[
	\sum_{k\in \cK}\triangle |u_{k,r}|(B_\r) \geq c \r^{n-1} - C r \r^n.
	\]
	Then from convergence in the sense of distributions, we have
	\[
	\sum_{k\in \cK}\triangle |v_k|(B_\r) \geq c \r^{n-1}>0
	\]
	for every $\r$. In particular, at least one of the $v_k$ is not identically $0$ (we call this index $k_*$).
	
	This means that $v_{k_*}$ is homogeneous of degree $1$ and harmonic where nonzero. If $v_{k_*}$ is not nonnegative (up to reversing sign), we would have to have $v_{k_*} = \a (x_n)_+ - \b (x_n)_-$, for some $\a,\b>0$ and some choice of coordinates: indeed, the trace of $v_{k_*}$ on $\p B_1$ is a first Dirichlet eigenfunction on the domains $\{v_{k_*}>0\}\cap \p B_1$ and $\{v_{k_*}<0\}\cap \p B_1$, with the same eigenvalue as a half-sphere (the eigenvalue is determined entirely by the degree of homogeneity). Any domain other than the half-sphere with this value for the first eigenvalue in the sphere must have strictly larger volume (see \cite{AC}), so if $v_{k_*}$ changes sign, the two components of $\{v_{k_*}\neq 0\}\cap \p B_1$ are complementary half-spheres, and $v_{k_*}$ has the form specified. This is a contradiction to the fact that the complement of $\Wb$ has positive Lebesgue density at $0$. We conclude that $v_{k_*}$ must be nonnegative; note also that $\{v_{k_*}>0\}$ is connected, by the same argument.
	
	Considering any other $v_k$, we see that it also vanishes on $H$ (which contains $\p\{v_{k_*}>0\}$, arguing as in the proof of Lemma \ref{lem:dimred}) and solves the same eigenfunction equation on $\p B_1 \cap \{v_k\neq 0\}$. From the uniqueness of the first eigenfunction on this region, we learn that all of the $v_k$ are constant multiples of $v_{k_*}$.
	
	Now arguing as in Lemma \ref{lem:dimred}, we have that $\p \Wb/r$ converges to $H$ in Hausdorff topology, and so $\{v_k\}_{k\in \cK}$ and $\{v_{k_*}>0\}$ form a viscosity solution with parameters $\xi_k$. Using that all of the $v_k$ are constant multiples of one another, this is equivalent to $v_{k_*}$ being a viscosity solution to the Alt-Caffarelli problem (i.e. (W1)).
	
	Properties (W2,4) are immediate, while property (W5) is easily passed to the limit. Property (W3) may then be checked as in the proof of Lemma \ref{lem:dimred}, using (W5) and and the fact that $W(0,0+)<1$. This implies the conclusion.
\end{proof}

\section*{Acknowledgments} DK was supported by the NSF MSPRF fellowship DMS-1502852. FL was supported by the NSF grant DMS-1501000.

\bibliographystyle{plain}
\bibliography{degenerate}

\end{document}